\newtheorem{theorem}{Theorem}[section]
\newtheorem{lemma}{Lemma}[section]
\newtheorem{remark}{Remark}[section]
\newtheorem{proposition}{Proposition}[section]
\newtheorem{corollary}{Corollary}[section]
\newtheorem{ass}{Assumption}[section]
\providecommand{\algorithmname}{Algorithm}
\newcounter{hypA}
\newcounter{hypB}
\newcounter{hypD}
\renewenvironment{abstract}
 {\small
  \begin{center}
  \bfseries \abstractname\vspace{-.5em}\vspace{0pt}
  \end{center}
  \list{}{%
    \setlength{\leftmargin}{5mm}% <---------- CHANGE HERE
    \setlength{\rightmargin}{\leftmargin}%
  }%
  \item\relax}
 {\endlist}
\begin{document}
\emergencystretch 3em
%+Title
\begin{center}

{\Large \textbf{Antithetic Multilevel Methods for Elliptic and Hypo-Elliptic Diffusions with Applications}}

\vspace{0.5cm}

BY YUGA IGUCHI$^{1}$, AJAY JASRA$^{2}$, MOHAMED MAAMA$^{3}$ \& ALEXANDROS BESKOS$^{1}$   \vspace{0.2cm}

{\footnotesize $^{1}$Department of Statistical Science, University College London, London, WC1E 6BT, UK.}
{\footnotesize E-Mail:\,} \texttt{\emph{\footnotesize yuga.iguchi.21@ucl.ac.uk, a.beskos@ucl.ac.uk}}\\
{\footnotesize $^{2}$School of Data Science,  The Chinese University of Hong Kong, Shenzhen, Shenzhen, CN.}
{\footnotesize E-Mail:\,} \\ \texttt{\emph{\footnotesize ajayjasra@cuhk.edu.cn}}\\
{\footnotesize $^{3}$Applied Mathematics and Computational Science Program, Computer, Electrical and Mathematical Sciences and Engineering Division, King Abdullah University of Science and Technology, Thuwal, 23955-6900, KSA.} {\footnotesize E-Mail:\,} \texttt{\emph{\footnotesize maama.mohamed@gmail.com}}\\
\end{center}

\begin{abstract}
We present a new antithetic multilevel Monte Carlo (MLMC) method for the estimation of expectations with respect to laws of diffusion processes that can be elliptic or hypo-elliptic. In particular, we consider
the case where one has to resort to time discretization of the diffusion and numerical simulation of such schemes. Inspired by recent works, we introduce a new MLMC estimator of expectations, which does not require any L\'evy area simulation and has a strong error of order 2 and a weak error of order 2. We then show how this approach can be used in the context of the filtering problem associated to partially observed diffusions with discrete time observations. We illustrate that in numerical simulations our new approaches provide efficiency gains for several problems, particularly when the diffusion process is hypo-elliptic, relative to some existing methods. 
%   , particularly when the diffusion process is hypo-elliptic.
\\
\noindent \textbf{Keywords}: Filtering, hypo-elliptic diffusion, multilevel Monte Carlo, stochastic differential equations.
\end{abstract}

%%%%%%%%%%%%%%%%%%%%%%%%%%%%%%%%%
\section{Introduction} \label{sec:intro}

We consider $N$-dimensional stochastic differential equation (SDE): 
\begin{equation} 
	\label{eq:diff}
	d X_t = \sigma_{0} (X_t) dt 
	+ \sum_{1 \le j \le d} \sigma_{j} (X_t) d B_t^j, \quad  X_0 = x \in \mathbb{R}^N, 
\end{equation}
where $\{B_t\}_{t \ge 0}$ is the $d$-dimensional standard Brownian motion defined upon the filtered probability space $(\Omega, \mathcal{F}, \{\mathcal{F}_t\}_{t \ge 0}, \mathbb{P})$, and $\sigma_{j} : \mathbb{R}^N \to \mathbb{R}^N$ satisfies some regularity conditions, to be made precise later, with $\sigma_j = [\sigma_j^1, \ldots, \sigma_j^N]^\top$ for $0 \le j \le d$.
Throughout the paper, the matrix $a = \sigma \sigma^\top$ can be degenerate, with $\sigma \equiv [\sigma_1, \ldots, \sigma_d]$. Thus, this class of diffusion process includes certain elliptic and hypo-elliptic diffusion processes that can be found in applications; see for instance \cite{kloeden}. In particular, \textcolor{black}{a lot of interest is shown recently in the literature for numerical analysis and statistical inference methods for hypo-elliptic diffusions}
%has been highlighted in literature 
(see e.g. \cite{dit, glot, iguchi1, iguchi2}). 
We consider the context that one cannot obtain an exact solution of the SDE, despite its existence, and has to resort to time-discretization of the diffusion and the associated numerical simulation and, again, there are many examples of such processes that are used in practice \cite{kloeden}.

The collection of problems that we focus upon in this article is, firstly, the computation of expectations with respect to (w.r.t.) laws of diffusion processes; we call this the \emph{forward problem}. That is,  given a function $\varphi:\mathbb{R}^N\rightarrow\mathbb{R}$ that is integrable w.r.t.~the transition law of the diffusion, the objective is the computation of a numerical approximation of $\mathbb{E}[\varphi(X_T)]$ for some given terminal time $T>0$.  
%This has numerous applications, e.g.~option pricing in mathematical finance \cite{glass}.
\textcolor{black}{Secondly, we consider the \emph{filtering problem}} for partially observed diffusion processes that are discretely observed in time.  In other words \eqref{eq:diff} is a latent process that is observed through noisy data, only at discrete times (which we take as unit times for simplicity). The objective is then to compute an approximation of the conditional expectation of $X_t$ at each observation time and given all the data available up-to that time. This is a classical problem in engineering, statistics and applied mathematics, see e.g.~\cite{bain,cappe} for further references and applications. 
%Our numerical methods will ultimately rely on (stochastic) Monte Carlo simulation, to be detailed later on.

For both aforementioned problems, one must resort to a time discretization of \eqref{eq:diff} whose properties can be critical for any resulting numerical approximation method relying on it. There are several numerical methods in the literature, such as the Euler-Maruyama (E-M) method and the Milstein scheme; see for instance \cite{kloeden}. The main properties that are often of interest to inform the efficiency of the approximation are the weak and strong error, which we shall define, loosely, as follows -- a full definition can be found later on. \textcolor{black}{For a time discretization on a regular grid of spacing $\Delta>0$, and a corresponding numerical approximation $\{\bar{X}_t\}_{t\geq 0}$
the weak error (assuming it exists) is the discrepancy:
$$
| 
\mathbb{E} [ \varphi (X_T) ] 
- \mathbb{E} [ \varphi (\bar{X}_T) ] 
|
$$
for an appropriate test function $\varphi: \mathbb{R}^{N} \to \mathbb{R}$. 
%where $\mathbb{E}$ is the expectation w.r.t.~the probability law associated to the diffusion process $\{X_t\}_{t\geq 0}$ and its numerical approximation $\{\bar{X}_t\}_{t\geq 0}$ and $|\cdot|$ is the $L_1-$norm. 
We remark that the numerical approximation may be defined in continuous time by interpolation between points on the time grid.}
The strong error\footnote{\textcolor{black}{In the literature, $\bigl\{ \mathbb{E} \bigl[ \| X_T-\bar{X}_T \|^2 \bigr] \bigr\}^{1/2}$ is used as the standard definition of strong error. However, we make use of the squared version of the definition because it aligns with the analysis on the variance of couplings in the context of multilevel Monte Carlo.}} (assuming it exists) is taken as:
$$
\mathbb{E} \bigl[ \| X_T-\bar{X}_T \|^2 \bigr]
$$
where $\|\cdot\|$ is the $L_2-$norm. There are several results for well-known discretization methods; e.g., 
E-M has weak error of $\mathcal{O}(\Delta)$ (weak error 1) and strong error of $\mathcal{O}(\Delta)$ (strong error 1) and the Milstein scheme has weak error 1
%of $\mathcal{O}(\Delta)$  
and strong error 2. %of $\mathcal{O}(\Delta^2)$. 
In the context of the methods to be used in this article, one generally would like \textcolor{black}{the order of weak and strong error} to be `large' at a cost of  $\mathcal{O}(\Delta^{-1})$ for \emph{directly} simulating the approximation. We note that direct simulation,  without for instance solving linear equations of cost of order $\mathcal{O}(N^m)$, $m> 2$, is critical for practical 
problems, especially filtering.

In this work we consider both elliptic and hypo-elliptic diffusion processes and in the latter case we have $N\geq 2$. In such scenarios, the Milstein method (or the strong 1.5 scheme, see \cite{kloeden}, with weak error 2 and strong error 3) cannot often be simulated directly, without a restrictive commutative condition (given later on), as one has to compute an intractable \emph{L\'evy area}. In such cases one resorts to the E-M approach, which can be simulated exactly, but \textcolor{black}{the order of weak and strong error} is comparatively low.  Whilst there
are some higher order discretization methods based upon stochastic Runge-Kutta approaches (see e.g.~\cite{rumm}),
generally for many Monte Carlo simulation-based methods a strong error of 2 generally suffices for `optimal' (to be clarified later on) variance properties. 
\textcolor{black}{An elegant methodology that side-steps sampling of L\'evy areas but preserves \textcolor{black}{strong error 2} was developed in \cite{ml_anti} based upon the multilevel Monte Carlo (MLMC) approach \cite{giles,giles1,hein}.}

MLMC works with a hierarchy of time-discretised diffusions, that is with a collection of step-sizes $0<\Delta_0<\cdots<\Delta_L$, $L
\in\mathbb{N}$. Then one rewrites the expectation of interest as a decomposition of the difference of the exact (no time discretization) expectation and the one with the finest time discretization and then a telescoping sum of differences of expectations associated to increasingly coarse step-sizes. Then, if one can appropriately simulate dependent (\emph{coupled}) time discretizations for pairs of step-sizes it is possible to reduce the cost of a Monte Carlo based algorithm (e.g.~the cost versus a direct simulation of the time discretised diffusion with a single step-size $\Delta_L$) to achieve a pre-specified mean square error (MSE) using MLMC; see e.g.~\cite{giles1} for a review.  \cite{ml_anti} introduce an \emph{antithetic} MLMC \textcolor{black}{(AMLMC)} using the \emph{truncated Milstein scheme} (defined in Section \ref{sec:t-Mil}) which has weak error 1 and stong error 1 \textcolor{black}{without requiring the simulation of intractable L\'evy areas, but the variance of couplings at each level decays w.r.t. the step-size at the same rate as the case of a time discretization having strong error 2, which leads to an optimal computational complexity}. 

In this article we develop a new method (multilevel-based) for time discretization which is \textcolor{black}{effective} in both the elliptic and hypo-elliptic contexts. Motivated by the work in \cite{ml_anti}, we derive a new \textcolor{black}{AMLMC} based on the numerical scheme proposed in \cite{iguchi2} achieving weak error 2 and strong error 1 (the latter is proven in this article), which still gives an optimal computational complexity (for the forward problem). The method can also be simulated directly with a cost of $\mathcal{O}(\Delta^{-1})$ per-pair of levels $0<\Delta<\Delta'$.  
An \textcolor{black}{AMLMC} with a weak error 2 has also been investigated in \cite{nv_anti}, where they used an alternative numerical scheme with a weak error 2 and emphasized its efficiency due to the reduction of the number of time-discretizations, 
which is an advantage over the AMLMC that uses the truncated Milstein scheme (weak error 1). 
A comparison between our proposed AMLMC and the method by \cite{nv_anti} is given later in Section \ref{sec:weak_mlmc_scheme}. 
% \textcolor{black}{We note that the approach does not provide a density of the time discretized transition density in the hypo-elliptic setting,  which can be important in certain applications (e.g.~\cite{iguchi2}),  but we believe our ideas can be extended just as in \cite{iguchi2}.} 
In addition, we show that our new methodology can be used for the filtering problem.  Some of the state-of-the-art numerical methods for this problem are based upon particle filters (e.g.~\cite{cappe,delmoral}) related to the MLMC approach, which are termed \emph{multilevel particle filters} (MLPFs) see e.g.~\cite{mlpf,anti_mlpf}.  Based upon the methodology developed herein, we derive a new MLPF.  To summarize, the main contributions of this article are:
\begin{itemize}
	\item{We introduce a locally non-degenerate scheme of weak error 2 for both elliptic and hypo-elliptic contexts, inspired by \cite{iguchi2}. We \textcolor{black}{prove} that the scheme has strong error~1.} 
	\item{ 
%		\textcolor{black}{Based on the above scheme}, 
		We then develop a new \textcolor{black}{AMLMC} method  
%		for elliptic and hypo-elliptic SDEs
		 \textcolor{black}{that does not contain L\'evy areas} and prove that the variance of the AMLMC estimator decays (w.r.t the step-size) \textcolor{black}{at the same rate as for a discretization scheme that would achieve a strong error 2.}} 
	% this method has a strong error of 2 (\textcolor{black}{in the sense of... more accurate definition}).}
% The weak error of order 2 was proved in \cite{iguchi2}.}
%\item{We show that for \textcolor{black}{the important class of \emph{small-noise}} diffusions, i.e.~SDEs containing a parameter \textcolor{black}{$\mu \ll 1$ at the front of the diffusion coefficient}, the variance of the proposed \textcolor{black}{AMLMC} estimator \textcolor{black}{vanishes as $\mathcal{O}(\mu^2)$ -- in constrast  
%	with the antithetic scheme of \cite{ml_anti} when the corresponding variances do not diminish for small $\mu$.}}
	\item{We show how to use the new \textcolor{black}{AMLMC} method for filtering within the context of MLPFs.}
	\item{
%		We illustrate our approaches in several numerical examples for both the forward and filtering problems.
We present numerical results to show that 
%We show numerically that 
our method can out-perform some competing approaches.}
\end{itemize}
% 
%The first bullet point is needed in the development of the article, to establish the necessity of an antithetic type extension which has weak order 2.
We further elaborate on some of the bullet points above. In the case of the forward problem, the \textcolor{black}{second bullet point leads to the new \textcolor{black}{AMLMC} estimator having a cost of $\mathcal{O}(\epsilon^{-2})$ to give a MSE of $\mathcal{O}(\epsilon^2)$, $\epsilon>0$, %then the cost to achieve such MSE is $\mathcal{O}(\epsilon^{-2})$ 
i.e.~the method attains the optimal cost for (stochastic) Monte Carlo based methods.} 
Such a MSE is also achieved by \cite{ml_anti}, however the higher rate of weak error \textcolor{black}{is expected to provide efficiency gains -- verified in our numerical experiments -- due to the necessity of the use of a finite $L$ (the most precise level) in simulations.} 
%for simulations with  finite $L$ (the most precise level) simulation. 
%Furthermore, the third bullet implies that our new \textcolor{black}{AMLMC method} achieves diminishing variance for small-noise diffusions (many models in applications are within this scope), which also prompts reduction of the computational cost. 
In the case of filtering,  we compare to the \textcolor{black}{MLPF} approaches in \cite{mlpf,anti_mlpf}.
\textcolor{black}{In \cite{mlpf} the authors prove that, in the elliptic case,  to achieve a MSE of $\mathcal{O}(\epsilon^2)$ there is a cost of $\mathcal{O}(\epsilon^{-2.5})$.  In \cite{anti_mlpf} the authors show that in simulations
to achieve a MSE of $\mathcal{O}(\epsilon^2)$ there is a cost of $\mathcal{O}(\epsilon^{-2}\log(\epsilon)^2)$; this
latter MLPF corresponds to an embedding of the multilevel approach of \cite{ml_anti} within the filtering problem.
}
We verify in our simulations that, as one expects based upon \cite{mlpf}, our new MLPF has \textcolor{black}{costs consistent with the anticipated rate $\mathcal{O}(\epsilon^{-2}\log(\epsilon)^2)$} to achieve a MSE of 
$\mathcal{O}(\epsilon^2)$. However, as the discretization schemes \textcolor{black}{underpinning} the methods in  \cite{mlpf,anti_mlpf} have weak error 1, we again observe efficiency gains for finite $L$. 
Finally, we note that our numerical scheme is locally \emph{non-degenerate} under a hypo-elliptic setting, while this is not the case for the truncated Milstein scheme. The existence of the density (non-degeneracy) is important in the filtering problem when utilising guided proposals \textcolor{black}{\cite{chopin}} to improve the performance of particle filters. 

This paper is structured as follows. In Section \ref{sec:num_schemes} we consider several numerical schemes for
SDEs and introduce our approach. In Section \ref{sec:filtering} we describe how our idea can be used in the context of the filtering problem and derive the new MLPF. In Section \ref{sec:numerics} we present our numerical results to illustrate our theoretical derivations. The mathematical proofs of our main results are given in the Appendix. 
% the Supplementary Material \cite{supp}.
\\ 

\noindent
\textbf{Notation:} Let $C_b^{K} (\mathbb{R}^n; \mathbb{R}^m)$, $n, m, K \in \mathbb{N}$, be the space of $K$-times differentiable functions $f : \mathbb{R}^n \to \mathbb{R}^m$ such that partial derivatives up to order $K$ are bounded. 
% Also, $\mathcal{B}_b (\mathbb{R}^n), \, n \in \mathbb{N},$ denotes the space of bounded-Borel functions $f: \mathbb{R}^n \to \mathbb{R}$. 
For a vector $y \in \mathbb{R}^N$, we define the norm $\|\cdot\|$ as $\| y \| \equiv \sqrt{\sum_{1 \le i \le N} y_i^2}$.  

\section{Numerical schemes}\label{sec:num_schemes} 
\subsection{Basic assumptions and error} 
\label{sec:setting}
To study a broad class of SDEs including the case where the matrix $a = \sigma \sigma^\top$ is degenerate, we consider the following structure for model (\ref{eq:diff}):
\begin{align} \label{eq:sde} 
d X_t = 
\begin{bmatrix}
d X_{S, t} \\ 
d X_{R, t} 
\end{bmatrix}
=
\begin{bmatrix}
\sigma_{S, 0} (X_t)  \\
\sigma_{R, 0} (X_t) 
\end{bmatrix}  dt 
+ \sum_{1 \le j \le d}
\begin{bmatrix}
\mathbf{0}_{N_S} \\
\sigma_{R, j} (X_t) 
\end{bmatrix} 
dB_t^j, \qquad X_0 = x \in \mathbb{R}^N, 
\end{align}
where we have set  
$
\sigma_{S, 0} : \mathbb{R}^N \to \mathbb{R}^{N_S},  
\, 
\sigma_{R, j} : \mathbb{R}^N \to \mathbb{R}^{N_R}, 
\, 0 \le j \le d, 
$ 
with integers $N_S \ge 0$, $N_R \ge 1$ such that $N_S + N_R = N$. We write for $x \in \mathbb{R}^N$:    
\begin{align*}
\sigma_0 (x) = 
\bigl[ 
\sigma_{S, 0} (x)^\top, 
\sigma_{R, 0} (x)^\top 
\bigr]^\top, 
\qquad 
\sigma_j (x) = 
\bigl[ 
\mathbf{0}_{N_S}^\top, 
\sigma_{R, j} (x)^\top  
\bigr]^\top,  
\quad 1 \le j \le d,
\end{align*}
and $a \equiv \sigma \sigma^\top$ with $\sigma \equiv [\sigma_1, \ldots, \sigma_d]$. Notice that when $N_S \ge 1$, the matrix $a$ is degenerate. We write  
$
[\sigma_0, \sigma_j] (x) \equiv \sum_{1 \le k \le  N} 
\bigl\{ \widetilde{\sigma}_0^k (x) 
\partial_{x_k} \sigma_j (x) 
-  {\sigma}_j^k (x)
\partial_{x_k} \widetilde{\sigma}_0 (x)
\bigr\}$, $1 \le j \le d$,     
where $\widetilde{\sigma}_0 : \mathbb{R}^N \to \mathbb{R}^N$ is the drift function when \textcolor{black}{the It\^o-type SDE (\ref{eq:sde}) is written as a Stratonovich one, specifically}, 
$
\textcolor{black}{\widetilde{\sigma}_0 (x) \equiv \sigma_0 (x) - \tfrac{1}{2} \sum_{1 \le i \le N} \sum_{1 \le j \le d} \sigma_j^i (x) \partial_i \sigma_j (x).}   
$ 

We introduce the following assumptions related to \emph{H\"ormander's condition} (see e.g. \cite{malliavin}).  
\begin{ass} \label{ass:coeff}
$\sigma_j \in C_b^{\infty} (\mathbb{R}^N; \mathbb{R}^N)$, $0 \le j \le d$. 
\end{ass} 
\begin{ass} \label{ass:hor}
(i) Ellipticity. When $N_S = 0$, it holds that for any $x \in \mathbb{R}^N$: 
\begin{align*}
\mathrm{Span} \bigl\{ \sigma_1 (x), \ldots, \sigma_d (x) \bigr\} = \mathbb{R}^N. 
\end{align*} 
% 
%which is equivalent to the matrix $a (x) = \sigma (x) \sigma (x)^\top$ being positive definite for any $x \in \mathbb{R}^N$.  \\ 
(ii) Hypo-ellipticity. When $N_S \ge 1$, it holds that for any $x \in \mathbb{R}^N$:
\begin{gather*}
\mathrm{Span} \bigl\{\sigma_{R, 1}(x), \ldots, \sigma_{R, d}(x) \bigr\} = \mathbb{R}^{N_R}, \,   
\mathrm{Span} \bigl\{\sigma_1 (x), \ldots, \sigma_d (x), [\sigma_0, \sigma_1] (x),\ldots, [\sigma_0, \sigma_{d}] (x)
\bigr\} = \mathbb{R}^{N}.  
\end{gather*}  
%s
\end{ass}
\noindent 
% 
% ***This Section needs a lot of work***
For a numerical scheme $\{ \bar{X}_{k \Delta} \}_{k = 0,1, \ldots, 2^{\ell}}$ with constant step-size $\Delta = T/2^{\ell}$ and non-negative integer $\ell$, \textcolor{black}{weak and strong error of order $m\ge 1$, respectively, are given as follows:} 
$$
\left|\mathbb{E}\left[\varphi (X_T)\right] - \mathbb{E}\left[ \varphi (\bar{X}_T) \right]
\right| 
= \mathcal{O}(\Delta^{m}), \qquad 
\mathbb{E} 
\bigl[ 
\max_{0 \le k \le 2^{\ell}}
\bigl\|  X_{k \Delta} - \bar{X}_{k \Delta}  \bigr\|^{2p} 
\bigr] 
= \mathcal{O} (\Delta^{mp}), \quad p \ge 1, 
$$ 
for some appropriate test function $\varphi : \mathbb{R}^N \to \mathbb{R}$.  
\subsection{Discretizations}

We introduce a discretization scheme of weak order 2 
% which will have a key involvement in our methodology}. 
and also mention other popular discretization schemes (e.g.~the Milstein scheme) for comparison. 
Let $T > 0$ be a terminal time and $\Delta_\ell = T/2^{\ell}$ be a step-size of discretization with a non-negative integer $\ell$. We make use of the notation $t_{k} = k \Delta_{\ell}$, $1 \le k \le 2^{\ell}$, and $\Delta B_{t, s} = B_{t} - B_{s}, \, 0 \le s \le t$. For a sufficiently smooth function $\varphi: \mathbb{R}^N \to \mathbb{R}$, we set: 
\begin{align*}
\mathcal{L}_0 \varphi(x) 
& = \sum_{1 \le k \le N}
\sigma_{0}^k (x) \partial_{x_k} \varphi(x) 
+ \tfrac{1}{2} 
\sum_{1 \le j \le d}  \sum_{1 \le k_1, k_2 \le N} 
\sigma_{j}^{k_1} (x) 
\sigma_{j}^{k_2} (x) 
\partial_{x_{k_1}} \partial_{x_{k_2}} \varphi(x); \\ 
\mathcal{L}_i \varphi(x) 
& = \sum_{1 \le k \le N}
\sigma_{i}^k (x) \partial_{x_k} \varphi (x), \qquad 1 \le i \le d.   
\end{align*}
We define, for $1 \le i_1, i_2 \le d$:
\begin{align*}
[\sigma_{i_1}, \sigma_{i_2}] (x) 
=  
\mathcal{L}_{i_1} \sigma_{i_2} (x) 
- \mathcal{L}_{i_2} \sigma_{i_1} (x), 
\qquad x \in \mathbb{R}^N,     
\end{align*} 
where 
$
\mathcal{L}_{i_1} \sigma_{i_2} (x) 
= [
\mathcal{L}_{i_1} \sigma_{i_2}^1 (x), 
\ldots, 
\mathcal{L}_{i_1} \sigma_{i_2}^N (x)  
]^\top \in \mathbb{R}^N.
$ 
\subsubsection{Milstein scheme}
{The Milstein scheme, of weak order 1 and strong order 2, writes as follows, for $0 \le k \le 2^{\ell} -1$}: 
\begin{align} \label{eq:milstein}
\begin{aligned}
& \bar{X}_{t_{k+1}}^{\mathrm{Mil}}
= \bar{X}_{t_k}^{\mathrm{Mil}}
+ \sigma_{0} (\bar{X}_{t_k}^{\mathrm{Mil}}) \Delta_\ell
+ \sum_{1 \le j \le d} 
\sigma_{j} (\bar{X}_{t_k}^{\mathrm{Mil}}) 
\Delta B_{t_{k+1}, t_k}^j \\ 
& \qquad 
+ \tfrac{1}{2} \sum_{1 \le j_1, j_2 \le d}
\mathcal{L}_{j_2} \sigma_{j_1} 
(\bar{X}_{t_k}^{\mathrm{Mil}}) 
\bigl(
\Delta B_{t_{k+1}, t_k}^{j_1} 
\Delta B_{t_{k+1}, t_k}^{j_2} 
- \Delta_\ell \cdot \mathbf{1}_{j_1 = j_2} 
- \Delta A_{t_{k+1}, t_k}^{j_1j_2} \bigr), 
\end{aligned}   \tag{Milstein}  
\end{align}
with $\bar{X}_{0}^{\mathrm{Mil}} 
= x$, where $\Delta A_{t_{k+1},t_k}^{j_1j_2}$ is a L\'evy area specified as 
$ 
\Delta A_{t_{k+1}, t_k}^{j_1j_2}  
\equiv  
{\textstyle 
\int_{t_k}^{t_{k+1}} 
\int_{t_k}^{s} d B_u^{j_1} d B_{s}^{j_2}
} 
- 
{\textstyle 
\int_{t_k}^{t_{k+1}} 
\int_{t_k}^{s} d B_{u}^{j_2} d B_{s}^{j_1}
}.   
$
Note that in general there is no effective way to directly simulate the L\'evy area. However, if the \emph{commutative condition} holds, i.e. for any $x \in \mathbb{R}^N$, 
\begin{align} \label{eq:comm}
\bigl[\sigma_{j_1}, \sigma_{j_2} \bigr] (x) = 0, 
% \mathcal{L}_{j_1} \sigma_{j_2} (x) - \mathcal{L}_{j_2} \sigma_{j_1} (x) = 0, 
\qquad 1 \le j_1, j_2 \le d, \ \ j_1 \neq j_2, 
\end{align}
then the L\'evy area does not appear in the Milstein scheme and the latter becomes tractable. 
\subsubsection{Truncated Milstein scheme} 
\textcolor{black}{The truncated Milstein scheme, used by the AMLMC method of \cite{ml_anti}, has weak and strong errors both of order equal to 1, and writes as follows, for $0 \le k \le 2^{\ell} -1$}:
\label{sec:t-Mil} 
\begin{align} \label{eq:t_mil}
\begin{aligned} 
& \bar{X}_{t_{k+1}}^{\textrm{T-Mil}}
= \bar{X}_{t_k}^{\textrm{T-Mil}}
+ \sigma_{0} (\bar{X}_{t_k}^{\textrm{T-Mil}}) 
\Delta_\ell 
+ \sum_{1 \le j \le d}
\sigma_{j} (\bar{X}_{t_k}^{\textrm{T-Mil}}) 
\Delta B_{t_{k+1}, t_k}^j   \\ 
& \qquad 
+ \tfrac{1}{2} \sum_{1 \le j_1, j_2 \le d}
\mathcal{L}_{j_1} \sigma_{j_2} 
(\bar{X}_{t_k}^{\textrm{T-Mil}}) 
\bigl(
\Delta B_{t_{k+1}, t_k}^{j_1} 
\Delta B_{t_{k+1}, t_k}^{j_2} 
- \Delta_\ell \cdot \mathbf{1}_{j_1 = j_2} 
\bigr),
\end{aligned}  \tag{T-Milstein} 
\end{align}
with $\bar{X}_{0}^{\textrm{T-Mil}} = x$. Since the scheme omits the L\'evy area $\Delta A_{t_{k+1}, t_k}^{j_1 j_2}$, the strong convergence rate is the same as for the E-M scheme unless the commutative condition holds. 

\subsubsection{Second order weak scheme} \label{sec:weak2}
Motivated from  \cite{iguchi2}, we introduce two nondegenerate discretization schemes for elliptic ($N_S = 0$) and hypo-elliptic ($N_S \ge 1$) cases, separately. That is, for $0 \le k \le 2^{\ell}-1$: 
\begin{align} \label{eq:scheme}
\begin{aligned}
& \bar{X}_{t_{k+1}} 
= \bar{X}_{t_k} 
+ \sigma_{0} (\bar{X}_{t_k}) 
\Delta_{\ell}
+ \sum_{1 \le j \le d}
\sigma_{j} (\bar{X}_{t_k}) \Delta B_{t_{k+1}, t_k}^j \\  
&  \qquad  + 
\sum_{0 \le j_1, j_2 \le d}
\mathcal{L}_{j_1} \sigma_{j_2} (\bar{X}_{t_k}) \Delta \eta^{j_1 j_2}_{t_{k+1}, t_k} 
+ \tfrac{1}{2} \sum_{1 \le j_1 < j_2 \le d}
[\sigma_{j_1}, \sigma_{j_2}] (\bar{X}_{t_k}) \Delta \widetilde{A}^{j_1 j_2}_{t_{k+1}, t_k}, 
\end{aligned} \tag{Weak-2}   
\end{align} 
with $\bar{X}_{0}  = x$, where the random variables 
$\Delta \eta^{j_1 j_2}_{t_{k+1}, t_k}$ and 
$\Delta \widetilde{A}^{j_1 j_2}_{t_{k+1}, t_k}$ are given as:  
\begin{align*}
\Delta \eta^{j_1 j_2}_{t_{k+1}, t_k}  
= 
\begin{cases}
\Delta \eta^{\textrm{Ell}, j_1 j_2}_{t_{k+1}, t_k}  & (N_S = 0);  \\[0.2cm]
\Delta \eta^{\textrm{H-Ell}, j_1 j_2}_{t_{k+1}, t_k}  & (N_S \ge 1), 
\end{cases}
\qquad  
\Delta \widetilde{A}^{j_1 j_2}_{t_{k+1}, t_k} 
= \Delta B_{t_{k+1}, t_k}^{j_1} \Delta \widetilde{B}_{t_{k+1}, t_k}^{j_2},  
\end{align*}
% 
%  \begin{gather*}
% \Delta \xi^{\textrm{Ell}, j_1 j_2}_{t_{i+1}} 
%  =  \xi_{0 j_1 , i+1}  = \tfrac{1}{2}
%  \delta B_{i+1}^{j_1} h, \qquad \xi_{j_1 j_1, i+1} = \tfrac{1}{2} (\delta B_{i+1}^{j_1})^2 - h;  \\[0.2cm] 
%  \xi_{j_1, j_2, i+1} = 
%  \begin{cases}
%  \tfrac{1}{2} \delta B_{i+1}^{j_1} \delta B_{i+1}^{j_2} + 
%  \tfrac{1}{2} \delta B_{i+1}^{j_1} \delta \widetilde{B}_{i+1}^{j_2}, &  0 < j_1 < j_2; \\[0.1cm]
%  \tfrac{1}{2} \delta B_{i+1}^{j_1} \delta B_{i+1}^{j_2} 
%  -  \tfrac{1}{2} \delta B_{i+1}^{j_2} \delta \widetilde{B}_{i+1}^{j_1}, &  0 < j_2 < j_1
%  \end{cases}, 
%  \end{gather*}
% 
where $\widetilde{B}_t = (\widetilde{B}_t^2, \ldots, \widetilde{B}_t^d), \, t \ge 0$, is a standard $(d-1)$-dimensional Brownian motion independent of 
$\{B_t\}_{t \ge 0}$ and:
\begin{align}
\Delta \eta^{\textrm{Ell}, j_1 j_2}_{t_{k+1}, t_k} 
& = 
\tfrac{1}{2} \bigl( 
\Delta B_{t_{k+1}, t_k}^{j_1} \Delta B_{t_{k+1}, t_k}^{j_2} - \Delta_\ell \cdot \mathbf{1}_{j_1 = j_2 \neq 0}
\bigr), 
\quad 0 \le j_1, j_2 \le d;
\nonumber
\\[0.2cm]   
\Delta \eta^{\textrm{H-Ell}, j_1 j_2}_{t_{k+1}, t_k}  
& = 
\begin{cases}
\Delta \eta^{\textrm{Ell}, j_1 j_2}_{t_{k+1}, t_k}  
& (1 \le j_1, j_2 \le d \ \textrm{or} \ j_1 = j_2 = 0);  \\[0.1cm]
{\textstyle \int_{t_k}^{t_{k+1}} \int_{t_k}^{s}} du dB_s^{j_2} 
& (j_1 = 0, 1 \le j_2 \le d); \\[0.1cm]
{\textstyle \int_{t_k}^{t_{k+1}} \int_{t_k}^{s}} dB_u^{j_1} ds 
& (1 \le j_1 \le  d, j_2 = 0).    \\   
\end{cases} 
\nonumber
%\label{eq:hypo_rv}
\end{align}
In the above specification of $\Delta \eta_{t_{k+1}, t_k}^{j_1 j_2}$, we use the interpretation $\Delta B^0_{t_{k+1}, t_k}  = \Delta_\ell$. The definition of the scheme under the hypo-elliptic setting slightly differs from the original one given in \cite{iguchi2}. In particular, the latter includes additional random variables in the approximation of the smooth component $X_{S, t}$ for the purpose of improving the performance of parameter estimation. \textcolor{black}{Without such additional variables, it is shown that (\ref{eq:scheme}) achieves a weak error 2 since the random variables used in the scheme satisfy the moment conditions outlined in \cite[Lemma 2.1.5]{mil} that are sufficient for the attained order of weak convergence.} 

We give several remarks on scheme (\ref{eq:scheme}). First, 
comparing with the truncated Milstein scheme, we observe that the scheme contains the terms $\Delta \widetilde{A}$ and random variables of size $\mathcal{O} (\Delta^{3/2}_{\ell})$ or $\mathcal{O} (\Delta^2_{\ell})$. 
Due to the inclusion of these terms, (\ref{eq:scheme}) is shown to achieve weak error 2. 
In particular, variable $\Delta \widetilde{A}$ is interpreted as a proxy to the L\'evy area in the distributional (but not pathwise) sense. 
Thus, as we will show in Section \ref{sec:strong_weak2}, the order of strong convergence for (\ref{eq:scheme}) is not as good as that of the Milstein scheme which uses the true L\'evy area (though the latter cannot be exactly simulated in general). 
Second, under the hypo-elliptic setting ($N_S \ge 1$), the scheme, in particular $\Delta \eta_{t_{k+1}, t_k}^{\textrm{H-Ell}}$, involves $\textstyle \int_{t_k}^{t_{k+1}} \int_{t_k}^{s} du dB_s^{j_2}, \, \int_{t_k}^{t_{k+1}} \int_{t_k}^{s} dB_u^{j_1} ds$ that can be directly simulated by Gaussian variables that preserve the covariance structure between these integrals and the Brownian motion. Together with Assumption~\ref{ass:hor}, use of these variables leads to the current state $\bar{X}_{t_{k+1}}$ given $\bar{X}_{t_k}$ containing a locally Gaussian approximation with non-degenerate covariance, that is: 
\begin{align} \label{eq:lg}
\begin{aligned}
\bar{X}_{S, t_{k+1}} 
& \approx 
\bar{X}_{S, t_k} 
+ \sigma_{S, 0} (\bar{X}_{t_k}) \Delta_\ell 
+ \sum_{1 \le j \le d} \mathcal{L}_j \sigma_{S, 0} (\bar{X}_{t_k}) 
\Delta \eta^{\textrm{H-Ell}, j 0}_{t_{k+1}, t_k};  \\
\bar{X}_{R, t_{k+1}} 
& \approx 
\bar{X}_{R, t_k} 
+ \sigma_{R, 0} (\bar{X}_{t_k}) \Delta_\ell 
+ \sum_{1 \le j \le d} 
\sigma_{R, j} (\bar{X}_{t_k}) 
\Delta B^{j}_{t_{k+1}, t_k}. 
\end{aligned} 
\end{align} 
Note that if $\Delta \eta^{\textrm{H-Ell}, j 0}_{t_{k+1}, t_k}$ above is replaced with $\Delta \eta^{\textrm{Ell}, j 0}_{t_{k+1}, t_k}
\equiv \tfrac{1}{2} \Delta B_{t_{k+1}, t_k}^{j} \Delta_\ell$ which is used in the elliptic setting ($N_S = 0$), then the covariance of the right hand side (R.H.S.) of (\ref{eq:lg}) is no longer positive definite. 
\subsection{Strong convergence of the weak second order scheme and summary} \label{sec:strong_weak2}
The strong error rate  of scheme 
(\ref{eq:scheme}) is the same as for the truncated Milstein and the E-M scheme. The proof of the following result is in Appendix \ref{app:s_rate}.
\begin{proposition} \label{prop:s_rate}
For any $p \ge 1$, there exists a constant $C > 0$ such that
$$
\mathbb{E} 
\bigl[ \max_{0 \le k \le 2^\ell} 
\| X_{t_k} - \bar{X}_{t_k} \|^{2p} 
\bigr] 
\le C \Delta_\ell^{p}. 
$$
\end{proposition}
Table \ref{table:literature} summarises the weak and strong errors for some of the most popular discretization schemes. 
%Those marked in black are linked to those schemes discussed above.  
The result for the strong error of scheme (\ref{eq:scheme}) is new. 
% and its derivation is given in Section \ref{sec:strong_weak2} below.

\begin{table}[!h]
\centering
\caption{Numerical scheme for general SDEs (i.e.~commutative condition (\ref{eq:comm}) not assumed to hold).} % title of Table
\label{table:literature}
% used for centering table
\begin{tabular}{c c c} % centered columns (4 columns)
% \hline\hline %inserts double horizontal lines
\toprule
Scheme
& Rate of weak/strong convergence 
& Is L\'evy area required?
\\  % inserts table
\midrule
% \citet{sam:12} & 
% \begin{tabular} {c}
	%     (\ref{eq:hypo-I}) with $N_S = N_R = 1$. \\ 
	%     $\mu_{S} (X_t,  \beta_S) = X_{S,t}$.
	% \end{tabular} & $\Delta_n = o (n^{-1/2})$ &  $\circ$ \\
% [0.3cm]
% \hline 
% \\[-0.2cm] 
%Euler-Maruyama
%& 1.0 / 1.0
%& No 
%\\[0.1cm] % inserting body of the table
% \hline
% \\[-0.2cm] 
\ref{eq:milstein} 
& 1.0 / 2.0 
& Yes 
\\[0.1cm]
% \\[0.1cm]
% \hline 
% \\[-0.2cm] 
\ref{eq:t_mil} 
& 1.0 / 1.0 
& No 
\\[0.1cm]
% \hline 
% \\[-0.2cm]   
\ref{eq:scheme} 
& 2.0  / 1.0
& No
\\ 

% \\[0.1cm]  
% \hline
% \\[-0.2cm]    
% Strong 1.5 order (\cite{kloeden})
% & 2.0 
% & 3.0
% & Yes 
% \\[0.1cm]   
\bottomrule   
\end{tabular}
\label{table:nonlin} % is used to refer this table in the text
\end{table}  
\subsection{Antithetic MLMC with weak second order scheme}\label{sec:weak_mlmc_scheme}

The aim is to combine the weak order 2 method (\ref{eq:scheme}) with the ideas of \cite{ml_anti} and consider a new antithetic MLMC (AMLMC) estimator so that the variance of couplings at each level decays w.r.t. the step-size at the same rate as the case of a time discretization having strong error 2.  
%  We will define an antithetic MLMC estimator by making use of the non-degenerate weak second order scheme (\ref{eq:scheme}) for elliptic/hypo-elliptic SDEs.  
% 
Throughout this subsection, let $\ell = 0, \ldots, L$ be the level of discretization ($2^{-\ell}$), where $L \in \mathbb{N}$ is the finest level of discretization. We write $T > 0$ for the length of the time interval and $\Delta_{\ell} = T/2^{\ell}$ for the discretization step-size. 
To define the antithetic estimator, we design discretizations on coarse/fine grids based upon scheme (\ref{eq:scheme}). 
For a fixed $\ell \le L$, we define the coarse grids 
$\mathbf{g}^{c, [\ell-1]} =\{t_{k} \}_{k=0,1, \ldots, 2^{\ell-1}}  $ and the fine grids $\mathbf{g}^{f, [\ell]} = \{t_k, t_{k+1/2}\}_{k = 0,1,\ldots, 2^{\ell-1}-1} \cup \{T\}$, where 
$ 
t_{k} = k \Delta_{\ell -1}, \, 
t_{k + 1/2} = \bigl(k+ 1/2 \bigr)\Delta_{\ell -1}.
$
% For a time instance $t \in \mathbf{g}^{c,[\ell-1]}$ or $t \in \mathbf{g}^{f,[\ell]}$ with its previous time instance being denoted by   $t^\prime$, we write: 
% % 
% % 
% \begin{gather*}
%   \Delta B_{t, t^\prime}^0 = t - t^\prime, 
%   \quad  
%   \Delta B_{t, t^\prime}^{j_1} = B_{t}^{j_1} - B_{t^\prime}^{j_1},
%   \quad 
%   \Delta \widetilde{B}_{t, t^\prime}^{j_1} = \widetilde{B}_{t}^{j_1} - \widetilde{B}_{t^\prime}^{j_1},
%   \quad 
%   \Delta \eta_{t, t^\prime}^{00} = \tfrac{(t-t^\prime)^2}{2};  \\ 
%   \Delta \eta_{t, t^\prime}^{j_10} 
%   = \int_{t^\prime}^{t} B_{s-t^\prime}^{j_1} ds, 
%   \quad  
%   \Delta \eta_{t, t^\prime}^{0j_2} 
%   = \int_{t^\prime}^{t} (s-t^\prime) dB_s^{j_2}, 
%   \quad 
%   \Delta \eta_{t, t^\prime}^{j_1 j_2}
%   = \tfrac{1}{2}
%     \bigl\{ 
%       \Delta B_{t, t^\prime}^{j_1} 
%       \Delta B_{t, t^\prime}^{j_2}
%       - (t-t^\prime) \cdot \mathbf{1}_{j_1 = j_2}
%       \bigr\} 
% \end{gather*}
% %  
% for $1 \le j_1, j_2 \le d$. 
For notational simplicity, we introduce two integrators $\overline{\mathcal{I}}_{t, s}, \widetilde{\mathcal{I}}_{t, s}: \mathbb{R}^N \to \mathbb{R}^N$, $0 \le s \le t $,  associated with scheme (\ref{eq:scheme}), so that for $x \in \mathbb{R}^N $: 
\begin{align*}
& \overline{\mathcal{I}}_{t, s} (x) \equiv 
x  +\sum_{0 \le j \le d} 
\sigma_{j} 
( x  )
\Delta B_{t, s}^j  + \sum_{0 \le  j_1, j_2 \le d}
\mathcal{L}_{j_1} \sigma_{j_2} (x) 
\Delta \eta_{t, s}^{j_1 j_2}  
+  \tfrac{1}{2} \sum_{1 \le j_1 < j_2 \le d}
\bigl[\sigma_{j_1}, \sigma_{j_2} \bigr]	(x) 
\Delta \widetilde{A}^{j_1 j_2}_{t, s};   \\ 
& \widetilde{\mathcal{I}}_{t, s} (x) \equiv 
x  +\sum_{0 \le j \le d} 
\sigma_{j} 
( x  )
\Delta B_{t, s}^j  
 + \sum_{0 \le  j_1, j_2 \le d} \mathcal{L}_{j_1} \sigma_{j_2} (x) 
\Delta \eta_{t, s}^{j_1 j_2}   -  \tfrac{1}{2} \sum_{1 \le j_1 < j_2 \le d}
\bigl[\sigma_{j_1}, \sigma_{j_2} \bigr]	(x)  \Delta \widetilde{A}^{j_1 j_2}_{t, s}. 
\end{align*}
Notice that the difference between the above two integrators is in the sign of the last term. 
On the coarse grids $\mathbf{g}^{c, [\ell-1]}$, we define a discretization scheme 
$\{\bar{X}_{t}^{c, [\ell-1]}\}_{t \in \mathbf{g}^{c, [\ell-1]}}$ and 
its \emph{antithetic version} 
$\{\widetilde{X}_{t}^{c, [\ell - 1]}\}_{t \in \mathbf{g}^{c, [\ell-1]}}$ as follows. For $0 \le k \le 2^{\ell-1}-1$: 
%for $0 \le k \le 2^{\ell - 1} - 1$,  
%
\begin{align} \label{eq:X_c}   
& \bar{X}_{t_0}^{c, [\ell - 1]}  = x, \quad 
\bar{X}_{t_{k+1}}^{c, [\ell - 1]}  = \overline{\mathcal{I}}_{t_{k+1}, t_k} ( \bar{X}_{t_{k}}^{c, [\ell - 1]}); \\ 
& \widetilde{X}_{t_0}^{c, [\ell - 1]}  = x, \quad 
\widetilde{X}_{t_{k+1}}^{c, [\ell - 1]}  = \widetilde{\mathcal{I}}_{t_{k+1}, t_k} ( \widetilde{X}_{t_{k}}^{c, [\ell - 1]}). 
\label{eq:X_c_a} 
\end{align}
%\begin{align}
%	\begin{aligned} \label{eq:X_c}  
%		& \bar{X}_{t_{k+1}}^{c, [\ell - 1]} 
%		= \bar{X}_{t_{k}}^{c, [\ell - 1]}
%		+\sum_{j = 0}^d 
%		\sigma_{j} 
%		( \bar{X}_{t_{k}}^{c, [\ell - 1]} )
%		\Delta B_{t_{k+1}, t_{k}}^j  		 \\  
%		& + \sum_{ j_1, j_2 = 0 }^d 
%		\mathcal{L}_{j_1} \sigma_{j_2} (\bar{X}_{t_{k}}^{c, [\ell - 1]}) 
%		\Delta \eta_{t_{k+1}, t_{k}}^{j_1 j_2}  
%        +  \tfrac{1}{2} \sum_{1 \le j_1 < j_2 \le d}
%		\bigl[\sigma_{j_1}, \sigma_{j_2} \bigr]
%		(\bar{X}_{t_{k}}^{c, [\ell - 1]}) 
%		\Delta \widetilde{A}^{j_1 j_2}_{t_{k+1}, t_k},  
%	\end{aligned} 
%\end{align}
%% 
%with its initial point $\bar{X}_{t_0}^{c, [\ell - 1]}  = x$,  
% 
%\begin{align}
%	\begin{aligned}
%		& \widetilde{X}_{t_{k+1}}^{c, [\ell - 1]} 
%		= \widetilde{X}_{t_k}^{c, [\ell - 1]}
%		% + \sigma_{0} ( \bar{X}^{c, [\ell], (j)}_{i}) h_\ell 
%		+ \sum_{j = 0} 
%		\sigma_{j} 
%		( \widetilde{X}_{t_k}^{c, [\ell - 1]} )
%		\Delta B_{t_{k+1}, t_{k}}^j \\ 
%		& + 
%		\sum_{ j_1, j_2 = 0 }^d  \mathcal{L}_{j_1} \sigma_{j_2} 
%		(\widetilde{X}_{t_{k-1}}^{c, [\ell - 1]}) 
%		\Delta \eta_{t_{k+1}, t_{k}}^{j_1 j_2}  
%		- \tfrac{1}{2} \sum_{1 \le j_1 < j_2 \le d}
%		\bigl[\sigma_{j_1}, \sigma_{j_2} \bigr]
%		( \widetilde{X}_{t_{k}}^{c, [\ell - 1]} )  
%		\Delta \widetilde{A}^{j_1 j_2}_{t_{k+1}, t_k},  
%	\end{aligned} %\label{eq:X_c_a}
%\end{align}
%% 
%with $\widetilde{X}_{t_0}^{c, [\ell - 1]}  = x$. 
%  
Similarly, on the fine grids $\mathbf{g}^{f, [\ell]}$, we define a numerical scheme $\{\bar{X}^{f, [\ell]}_t\}_{t \in \mathbf{g}^{f, [\ell]}}$ and its antithetic version $\{\widetilde{X}^{f, [\ell]}_t\}_{t \in \mathbf{g}^{f, [\ell]}}$ as follows. For  $0 \le k \le 2^{\ell-1}-1$:
\begin{align} 
& \bar{X}^{f, [\ell]}_{t_0} = x,  \quad 
\bar{X}^{f, [\ell]}_{t_{k+1/2}}  = \overline{\mathcal{I}}_{t_{k+1/2}, t_{k}}  (\bar{X}^{f, [\ell]}_{t_{k}}),  \quad 
\bar{X}^{f, [\ell]}_{t_{k+1}}  = 
\overline{\mathcal{I}}_{t_{k+1}, t_{k+1/2}}  (\bar{X}^{f, [\ell]}_{t_{k+1/2}});
\label{eq:X_f}    \\ 
&  \widetilde{X}^{f, [\ell]}_{t_0} = x,  \quad 
\widetilde{X}^{f, [\ell]}_{t_{k+1/2}}  = \widetilde{\mathcal{I}}_{t_{k+1}, t_{k+1/2}}  (\widetilde{X}^{f, [\ell]}_{t_{k}}),  \quad 
\widetilde{X}^{f, [\ell]}_{t_{k+1}}  = 
\widetilde{\mathcal{I}}_{t_{k+1/2}, t_{k}}  (\widetilde{X}^{f, [\ell]}_{t_{k+1/2}}).  
\label{eq:X_a}  
\end{align} 
The antithetic scheme (\ref{eq:X_a}) features the following two key properties: 1. On the interval $[t_{k}, t_{k+1}]$, the Gaussian increments used in the standard discretisation (\ref{eq:X_f}) are exchanged between the first and second halfs; 2. The sign of the last term in the integrator is \emph{minus}.  We note that the first point (exchange of Gaussian increments) is featured in the antithetic truncated Milstein scheme proposed in \cite{giles} as well, but the second point (change of sign) is not. Let $\varphi: \mathbb{R}^N \to \mathbb{R}$ be some suitable test function.  In the next section
we will define an antithetic estimator based upon the weak second order scheme (\ref{eq:scheme}) and use  the identity: 
\begin{align} \label{eq:cpl_weak2} 
\mathbb{E} [\varphi (\bar{X}_T^{f, [L]})] 
= \mathbb{E} [\mathcal{P}_{0}^{\varphi}] 
+ \sum_{1 \le \ell \le L} 
\mathbb{E} 
\bigl[ \mathcal{P}_{f, \ell}^{\varphi} 
- \mathcal{P}_{c, \ell -1}^{\varphi} 
\bigr],  
\end{align}
where we have set, for $1 \le \ell \le L$:
\begin{align} 
\mathcal{P}_{f, \ell}^{\varphi} 
\equiv 
\tfrac{1}{2} \bigl( 
\varphi(\bar{X}_T^{f, [\ell]}) + \varphi(\widetilde{X}_T^{f, [\ell]})
\bigr),  
\; 
\mathcal{P}_{c, \ell-1}^{\varphi} \equiv 
\tfrac{1}{2} \bigl( 
\varphi(\bar{X}_{T}^{c, [\ell-1]}) + \varphi(\widetilde{X}_{T}^{c, [\ell-1]})
\bigr),  
\;    \mathcal{P}_{0}^{\varphi} \equiv 
\varphi(\bar{X}_T^{c, [0]}). \label{eq:p_def_l}
% \label{eq:p_def_0}
\end{align} 
We notice that 
$
\mathbb{E} [\mathcal{P}_{0}^\varphi] 
= \mathbb{E} [\mathcal{P}_{c, 0}^\varphi], \  
\mathbb{E} [\mathcal{P}_{f, \ell}^\varphi] 
= \mathbb{E} [\mathcal{P}_{c, \ell}^\varphi],  \   1 \le \ell \le L-1. 
$
For $s_1 \in \mathbf{g}^{c, [\ell-1]}$ and $s_2 \in \mathbf{g}^{f, [\ell]}$ with $\ell = 1, \ldots, L$, we define: 
\begin{align} \label{eq:ave_scheme}
\hat{X}^{c, [\ell-1]}_{s_1} = \tfrac{1}{2} \bigl( \bar{X}_{s_1}^{c, [\ell-1]} 
+ \widetilde{X}_{s_1}^{c, [\ell-1]} \bigr), 
\qquad 
\hat{X}^{f, [\ell]}_{s_2} = 
\tfrac{1}{2} \bigl(
\bar{X}_{s_2}^{f, [\ell]} 
+ \widetilde{X}_{s_2}^{f, [\ell]} 
\bigr),  
\end{align}
% % 
and study the $L^p$ bound for the coupling $\mathcal{P}_{f, \ell}^{\varphi} 
- \mathcal{P}_{c, \ell -1}^{\varphi}$.  

\begin{lemma} \label{lemma:s_bd_cpl}
Let $\varphi \in C_b^2 (\mathbb{R}^N; \mathbb{R})$ and $1 \le \ell \le L$. For any $p \ge 2$, there exist constants $C_1, C_2, C_3  > 0$ such that
\begin{align} 
\mathbb{E} \bigl[ 
\bigl( 
\mathcal{P}_{f, \ell}^{\varphi} 
- \mathcal{P}_{c, \ell -1}^{\varphi} 
\bigr)^p \bigr]  
& \le C_1 \mathbb{E} \bigl[ \| \hat{X}_{T}^{f, [\ell]} - \hat{X}_{T}^{c, [\ell-1]} \|^p  \bigr]
+ C_2 \mathbb{E} \bigl[ \| \bar{X}_{T}^{f, [\ell]}
- \widetilde{X}_{T}^{f, [\ell]} \|^{2p} \bigr] \nonumber  \\[0.1cm] 
& \qquad 
+ C_3 \mathbb{E} 
\bigl[ 
\| \bar{X}_{T}^{c, [\ell-1]} - \widetilde{X}_{T}^{c, [\ell-1]} \|^{2p} 
\bigr]. 
\label{eq:s_bd_cpl}
\end{align} 
\end{lemma}
\begin{proof}
%	Recall the definition of $\hat{X}^{f, [\ell]}$ and $\hat{X}^{c, [\ell-1]}$ in (\ref{eq:ave_scheme}). 
The second order Taylor expansion yields: $\mathcal{P}_{f, \ell}^\varphi = \varphi(\hat{X}_{T}^{f, [\ell]}) 
+ F_{1}^{\xi_1, \xi_2}$ and $\mathcal{P}_{c, \ell-1}^\varphi = \varphi(\hat{X}_{T}^{c, [\ell-1]}) + F_{2}^{\xi_3, \xi_4}$, where we have set:  
\textcolor{black}{  
\begin{align*}
F_{1}^{\xi_1, \xi_2} & \equiv \tfrac{1}{16} 
( 
\bar{X}_{T}^{f, [\ell]} - \widetilde{X}_{T}^{f, [\ell]} )^\top 
\bigl( \partial^2  \varphi (\xi_1) + \partial^2  \varphi (\xi_2) \bigr) 
( 
\bar{X}_{T}^{f, [\ell]} - \widetilde{X}_{T}^{f, [\ell]} ); \\[0.1cm] 
F_{2}^{\xi_3, \xi_4}  
& \equiv \tfrac{1}{16} 
( 
\bar{X}_{T}^{c, [\ell-1]} 
- \widetilde{X}_{T}^{c, [\ell-1]} )^\top 
\bigl( {\partial^2}  \varphi (\xi_3) 
+ {\partial^2}  \varphi (\xi_4) \bigr)
( 
\bar{X}_{T}^{c, [\ell-1]} 
- \widetilde{X}_{T}^{c, [\ell-1]}), 
\end{align*}
} 
\textcolor{black}{for some $\xi_1, \xi_2, \xi_3, \xi_4 \in \mathbb{R}^N$, where  $\partial^2\varphi(\cdot)$ is the $N\times N$ matrix of 2nd derivatives.} Thus: 
\textcolor{black}{
\begin{align} \label{eq:cpl_diff}
 & \mathcal{P}_{f, \ell}^{\varphi} 
- \mathcal{P}_{c, \ell -1}^{\varphi} 
= \partial \varphi (\xi_5)^{\top}   \bigl( 
\hat{X}_{T}^{f, [\ell]}  
- \hat{X}_{T}^{c, [\ell-1]} \bigr) 
 + F_{1}^{\xi_1, \xi_2} 
 - F_{2}^{\xi_3, \xi_4}, 
% & \qquad 
% - \tfrac{1}{16} 
% \bigl( 
% \bar{X}_{T}^{c, [\ell-1]} 
% - \widetilde{X}_{T}^{c, [\ell-1]} \bigr)^\top 
% \bigl( {\partial^2}  \varphi (\xi_3) 
% + {\partial^2}  \varphi (\xi_4) \bigr)
% \bigl( 
% \bar{X}_{T}^{c, [\ell-1]} 
% - \widetilde{X}_{T}^{c, [\ell-1]} \bigr), \nonumber  
\end{align}
}
\textcolor{black}{for some $\xi_5 \in \mathbb{R}^N$, where $\partial\varphi(\cdot)$ is the $N\times 1$ vector of 1st derivatives. } Due to the boundedness of the test function $\varphi$ and \textcolor{black}{the standard inequality given in (\ref{eq:bd})}, we conclude from (\ref{eq:cpl_diff}). 
%	The rest of the proof is the same as that of \cite[Lemma 2.2]{ml_anti}.  
\end{proof} 

Our objective is to derive bounds for each term in the R.H.S.~of (\ref{eq:s_bd_cpl}) over a coarse time step $\Delta_{\ell-1}$. 
For the first term, we have the following result: 
%  with proof given in Section \ref{app:strong_err_coupl}.  
% 
\begin{theorem}
\label{thm:strong_err_coupl}
Let $1 \le \ell \le L$. For all $p \ge 2$, there exists a constant $C$ such that
\begin{align*}
\mathbb{E} \bigl[ \, 
\max_{t \in \mathbf{g}^{c, [\ell-1]}} 
\| 
\hat{X}^{f, [\ell]}_{t} - \hat{X}^{c, [\ell-1]}_{t} 
\|^p
\bigr] \le C \, \Delta_{\ell-1}^p. 
\end{align*}
\end{theorem}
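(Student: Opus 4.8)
The plan is to adapt the antithetic strong-convergence analysis of \cite{ml_anti} to the weak-second-order scheme (\ref{eq:scheme}), working at the level of a single coarse step $[t_k,t_{k+1}]$ of length $\Delta_{\ell-1}=2\Delta_\ell$ and then propagating the bound by a discrete Gr\"onwall argument. First I would compose the two fine sub-steps (\ref{eq:X_f}) into a single one-coarse-step update for $\bar X^{f,[\ell]}$, Taylor expanding the coefficients at $t_{k+1/2}$ around their values at $t_k$; by Assumption \ref{ass:coeff} all derivatives are bounded, so the expansion remainders are controlled. I would do the same for the antithetic fine path (\ref{eq:X_a}), which uses the swapped sub-increments $\delta B_1=\Delta B_{t_{k+1/2},t_k}$, $\delta B_2=\Delta B_{t_{k+1},t_{k+1/2}}$ and the flipped sign on the commutator term. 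Averaging the two expansions as in (\ref{eq:ave_scheme}) then gives the one-coarse-step increment of $\hat X^{f,[\ell]}$, to be compared with that of $\hat X^{c,[\ell-1]}$.

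The crux is to match these two increments term-by-term in powers of $\Delta_{\ell-1}^{1/2}$. The leading Brownian term agrees since $\delta B_1+\delta B_2=\Delta B_{t_{k+1},t_k}$. For the second-order Milstein-type terms I would show that the symmetric part reconstitutes $\sum_{j_1,j_2}\mathscr{L}_{j_1}\sigma_{j_2}\,\Delta\eta^{j_1j_2}_{t_{k+1},t_k}$: the diagonal sub-step contributions sum to $\tfrac12(\delta B_1^{j_1}\delta B_1^{j_2}+\delta B_2^{j_1}\delta B_2^{j_2})$, while the composition cross-term is $\mathscr{L}_{j_1}\sigma_{j_2}\,\delta B_1^{j_1}\delta B_2^{j_2}$ for the fine path and the transposed $\delta B_2^{j_1}\delta B_1^{j_2}$ for the antithetic path, so that upon averaging the antisymmetric (L\'evy-area) component cancels and the total recovers $\tfrac12(\Delta B^{j_1}\Delta B^{j_2}-\Delta_{\ell-1}\mathbf{1}_{j_1=j_2})$. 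Simultaneously, the explicit proxy terms $\tfrac12[\sigma_{j_1},\sigma_{j_2}]\Delta\widetilde A$ cancel between $\bar X^{f}$ and $\widetilde X^{f}$ (and between $\bar X^{c}$ and $\widetilde X^{c}$) because of the sign flip, so they survive in neither averaged quantity. What is left is a local discrepancy that I would split into a conditionally mean-zero martingale part of size $\mathcal{O}(\Delta_{\ell-1}^{3/2})$ and a conditional-mean part of size $\mathcal{O}(\Delta_{\ell-1}^{2})$, plus coefficient-difference terms.

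With this one-step decomposition, I would write the recursion for $D_{t_k}\equiv \hat X^{f,[\ell]}_{t_k}-\hat X^{c,[\ell-1]}_{t_k}$. The coefficient differences are handled by expanding $\tfrac12(\sigma_j(\bar X^{f})+\sigma_j(\widetilde X^{f}))$ about $\hat X^{f}$ and the coarse analogue about $\hat X^{c}$: smoothness yields a Lipschitz term in $D_{t_k}$ together with second-order terms that are quadratic in the spreads $\bar X^{f}-\widetilde X^{f}$ and $\bar X^{c}-\widetilde X^{c}$. Taking $p$-th moments, applying the Burkholder--Davis--Gundy inequality to the martingale part and H\"older/Jensen to the mean and coefficient-difference parts, and invoking the spread bounds $\mathbb{E}[\|\bar X^{f}-\widetilde X^{f}\|^{2p}]=\mathcal{O}(\Delta_{\ell-1}^{p})$ and $\mathbb{E}[\|\bar X^{c}-\widetilde X^{c}\|^{2p}]=\mathcal{O}(\Delta_{\ell-1}^{p})$ (each spread is driven by the $\mathcal{O}(\Delta_{\ell-1})$-per-step area proxies, so these are proved exactly as in Proposition \ref{prop:s_rate}), a discrete Gr\"onwall inequality closes the estimate at the claimed rate $\mathbb{E}[\max_t\|D_t\|^p]\le C\,\Delta_{\ell-1}^{p}$.

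I expect the main obstacle to be the bookkeeping of the cancellation in the hypo-elliptic case ($N_S\ge1$), where $\Delta\eta^{\mathrm{H-Ell}}$ replaces $\tfrac12\Delta B\,\Delta B$ in the smooth directions by the time-integrated variables $\int_{t_k}^{t_{k+1}}\!\int_{t_k}^{s}dB_u\,ds$ and $\int_{t_k}^{t_{k+1}}\!\int_{t_k}^{s}du\,dB_s$, of order $\mathcal{O}(\Delta_{\ell-1}^{3/2})$, which must be split consistently across the two fine sub-steps so that their average reconstitutes the coarse-step integral via the additivity of iterated integrals over $[t_k,t_{k+1/2}]$ and $[t_{k+1/2},t_{k+1}]$. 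Checking that the antithetic swap leaves these symmetric contributions intact while still cancelling the antisymmetric area terms, and that all residuals fall into the $\mathcal{O}(\Delta_{\ell-1}^{3/2})$ mean-zero and $\mathcal{O}(\Delta_{\ell-1}^{2})$ mean buckets, is the delicate step; the subsequent moment and Gr\"onwall machinery is then routine.
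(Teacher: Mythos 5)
Your proposal follows essentially the same route as the paper's proof: you compose the two fine sub-steps with an It\^o--Taylor expansion of the mid-point coefficients (the paper's Lemma \ref{lemma:diff_fine}), average with the antithetic paths so that the antisymmetric cross-increment and the $\Delta \widetilde{A}$ proxy terms cancel up to coefficient differences controlled by the spread bounds $\mathbb{E}\bigl[\|\bar{X}-\widetilde{X}\|^{2p}\bigr]=\mathcal{O}(\Delta_{\ell-1}^{p})$ (the paper's Lemmas \ref{lemma:f_anti_diff} and \ref{lemma:coare}), and close via Burkholder--Davis--Gundy plus discrete Gr\"onwall, exactly as in Appendix \ref{app:strong_err_coupl}. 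The only cosmetic difference is your concern about exactly reconstituting the hypo-elliptic iterated integrals across sub-steps: the paper does not need this, since it simply absorbs all drift--diffusion cross terms $\Delta\eta^{m0},\Delta\eta^{0m}$ (and the sub-step mismatches of the iterated integrals) into the conditionally mean-zero $\mathcal{O}(\Delta_{\ell-1}^{3/2})$ remainders --- precisely the bucketing you describe, so your plan is sound as stated.
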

\begin{proof}
Let $0 \le n \le 2^{\ell- 1}$ and  $ \hat{\mathcal{S}}_n \equiv  \mathbb{E} \bigl[ \max_{0 \le k \le n}
\| \hat{X}_{t_k}^{f, [\ell]} - \hat{X}_{t_k}^{c, [\ell-1]} \|^p \bigr]. 
$ 
It holds that for any $p \ge 2$, there exists a constant $C_p > 0$ such that: 
\begin{align} \label{eq:ineq_S}
\hat{\mathcal{S}}_n \le C_p \sum_{1 \le j \le N} 
\mathbb{E} 
\bigl[ \max_{0 \le k \le n} 
| 
\hat{X}_{t_k}^{f, [\ell], j} - \hat{X}_{t_k}^{c, [\ell-1], j} 
|^p  
\bigr]. 
\end{align}
Then, it suffices to show that there exists a constant $C > 0$ such that:  
\begin{align} \label{eq:bd_cpl}
\mathbb{E} 
\bigl[ \max_{0 \le k \le n} |  
\hat{X}_{t_k}^{f, [\ell], j} 
- \hat{X}_{t_k}^{c, [\ell-1], j} 
|^p  \bigr] 
\le 
C \Bigl(\Delta_{\ell-1}^p + \Delta_{\ell-1} 
\sum_{0 \le k \le n-1} \hat{\mathcal{S}}_k \Bigr), 
\end{align}
which leads to the desired result by applying the discrete Gr\"onwall inequality to (\ref{eq:ineq_S}). Recursive application of  (\ref{eq:X_hat_f}) and (\ref{eq:X_hat_c}) given in Lemmas \ref{lemma:f_anti_diff} and \ref{lemma:coare}  respectively yields: 
\allowdisplaybreaks
\begin{align} 
& \hat{X}^{f, [\ell], j}_{t_k} - \hat{X}^{c, [\ell-1], j}_{t_k}
= 
\sum_{\substack{0 \le i \le k-1 \\  0 \le m \le d} } 
\bigl( \sigma_{m}^j (\hat{X}^{f, [\ell]}_{t_i})  - \sigma_{m}^j (\hat{X}^{c, [\ell-1]}_{t_i}) \bigr) 
\Delta B_{t_{i+1}, t_i}^{m}  \label{eq:key_diff} \\ 
& \! \!  \! \! 
+ \!  \! \sum_{\substack{ 0 \le i \le k-1 \\ 1 \le m_1, m_2 \le d}}
\! \!  \!  \!  \! 
\bigl( \mathcal{L}_{m_1} \sigma_{m_2}^j 
(\hat{X}^{f, [\ell]}_{t_i}) 
- \mathcal{L}_{m_1} \sigma_{m_2}^j 
(\hat{X}^{c, [\ell-1]}_{t_i}) \bigr) 
\Delta \eta_{t_{i+1}, t_i}^{m_1 m_2} 
+ \sum_{0 \le i \le k-1}  
\bigl( \hat{\mathcal{M}}_{t_{i+1}, t_i}^{j}  + 
\hat{\mathcal{N}}_{t_{i+1}, t_i}^{j}   \bigr),  \nonumber 
%		& \qquad + \sum_{0 \le i \le k-1}
%		\bigl\{ 
%		\hat{\mathcal{M}}_{t_{i+1}, t_i}^{f,j} 
%		+ \hat{\mathcal{N}}_{t_{i+1}, t_i}^{f, j}
%		+ \hat{\mathcal{M}}_{t_{i+1}, t_i}^{c,j}
%		+ \hat{\mathcal{N}}_{t_{i+1}, t_i}^{c,j}  
%		\bigr\},  
\end{align}
where the remainder terms are such that
$
\mathbb{E} 
\bigl[ 
\hat{\mathcal{M}}^{j}_{t_{i+1}, t_i} 
| \mathcal{F}_{t_{i}} 
\bigr] = 0,  \   0 \le i \le 2^{\ell-1} - 1,
$ 
and for any $p \ge 2$ there exist constants $C_1, C_2 > 0$ such that 
$ 
\max_{0 \le i \le 2^{\ell-1} - 1} 
\mathbb{E} \bigl[ | \hat{\mathcal{M}}^{j}_{t_{i+1}, t_i}|^p \bigr] 
\le C_1 \Delta_{\ell-1}^{3p/2}$ and   
$\max_{0 \le i \le 2^{\ell-1} - 1} 
\mathbb{E} \bigl[ | \hat{\mathcal{N}}^{j}_{t_{i+1}, t_i} |^p \bigr] 
\le C_2 \Delta_{\ell -1}^{2p}. 
$   
Given (\ref{eq:key_diff}), the bound (\ref{eq:bd_cpl})  holds by following the same argument as in the proof of \cite[Theorem 4.10]{ml_anti}, and we conclude. 
\end{proof}
Also, we have that, for any $\textcolor{black}{p \ge 1}$ there exist constants $C_1, C_2 > 0$ such that: 
\begin{align} 
\begin{aligned} \label{eq:s_bd} 
& \mathbb{E} \bigl[ \, \max_{t \in \mathbf{g}^{c, [\ell-1]}}  
\| \bar{X}_{t}^{f, [\ell]} - \widetilde{X}_{t}^{f, [\ell]} \|^{2p} \bigr] \le C_1 \Delta_{\ell-1}^p;  \\ 
& \mathbb{E} 
\bigl[ \, \max_{t \in \mathbf{g}^{c, [\ell-1]}}
\| \bar{X}_{t}^{c, [\ell-1]}
- \widetilde{X}_{t}^{c, [\ell-1]} 
\|^{2p} \bigr] 
\le C_2 \Delta_{\ell-1}^p,  
\end{aligned}
\end{align} 
\textcolor{black}{which are obtained from the strong convergence rate of scheme (\ref{eq:scheme}) and the same argument used in the proof of \cite[Lemma 4.6]{ml_anti}.} 
Hence, from Theorem \ref{thm:strong_err_coupl}, Lemma \ref{lemma:s_bd_cpl} and (\ref{eq:s_bd}), we obtain the following result.
\begin{corollary}
\label{cor:fn_bd}
Let $\varphi \in C_b^2 (\mathbb{R}^N; \mathbb{R})$ and $1 \le \ell \le L$. 
For any $p \ge 2$ there exists constant  $C > 0$ such that
$ 
\mathbb{E} [ 
( 
\mathcal{P}_{f, \ell}^{\varphi} 
- \mathcal{P}_{c, \ell -1}^{\varphi} 
)^p ]
\le C \, \Delta_{\ell-1}^p. 
$ 
\end{corollary}
\begin{remark}
The AMLMC estimator under scheme (\ref{eq:scheme}) is designed to have \textcolor{black}{four different integrations}, as given in (\ref{eq:X_c})-(\ref{eq:X_a}), while the antithetic estimator under the truncated Milstein scheme \cite{ml_anti} uses \textcolor{black}{three types of integrators} without the antithetic coarse approximation $\widetilde{X}^{c, [\ell-1]}$. In the case of (\ref{eq:scheme}), use of only \textcolor{black}{three integrators} would lead to no improvement in strong convergence due to the presence of the term involving $\Delta \widetilde{A}_{t_{k+1}, t_k}^{j_1 j_2}$ with a size of $\mathcal{O} (\Delta_\ell)$. $\widetilde{X}^{c, [\ell-1]}$ is exploited to deal with the above $\mathcal{O} (\Delta_\ell)$-term and obtain the higher rate of strong convergence (Theorem \ref{thm:strong_err_coupl}).
\end{remark}
\begin{remark} 
\cite{nv_anti} constructed an AMLMC method based on the Ninomiya-Victoir (N-V) scheme \cite{nv}, an alternative scheme of weak error 2. They showed that the strong error of the N-V scheme is 1 and then improved it with the technique of the antithetic multilevel estimator. The advantages of the proposed AMLMC based on (\ref{eq:scheme}) against that of the N-V scheme are summarized as follows: 
(i) Scheme (\ref{eq:scheme}) is always explicit while the N-V is a semi-closed scheme in the sense that it requires solving ODEs defined via the SDE coefficients and their solvability depends on the definition of coefficients; 
(ii) Our antithetic scheme uses \textcolor{black}{four different integrators} (\ref{eq:X_c})-(\ref{eq:X_a}), while the antithetic estimator with the N-V scheme uses six \textcolor{black}{integrators}; 
(iii) Our (\ref{eq:scheme}) scheme is designed to be locally non-degenerate for both elliptic/hypo-elliptic settings (Section \ref{sec:setting}) as we explained in Section \ref{sec:weak2}. Such a non-degenerate scheme is beneficial for the filtering problem as we described in Section \ref{sec:intro}.       
\end{remark}
% 
% \subsection{Proof of Theorem \ref{thm:strong_err_coupl}} \label{app:strong_err_coupl}
% 
%\begin{remark}
%	The improvement in the variance bound for AW2 in Theorem \ref{thm:m_bd_sd} comes from the inclusion of higher-order stochastic Taylor expansion terms from the drift coefficient $\sigma_0$ in scheme (\ref{eq:scheme}) (note that the \ref{eq:t_mil} scheme contains higher order terms from the diffusion coefficients only).  For instance, the error bound for the truncated Milstein scheme is affected by the term $\mathcal{L}_0 \sigma_0 (\cdot) \Delta_{\ell}^2/2$ being independent of the small diffusion parameter $\mu$.
%while this is not the case for the weak second order scheme due to the inclusion of the term in the scheme. The details can be found in the proofs in Appendix \ref{app:m_bd_sd} in Supplementary Material \cite{supp}. 
%\end{remark}
% 
\subsection{AMLMC for forward problem}
\label{sec:mlmc_for}
In order to estimate $\mathbb{E}\left[\varphi (X_T)\right]$, one simply needs to sample the systems \eqref{eq:X_c}-\eqref{eq:X_a} using the same source of randomness (i.e.~the same Brownian motion and Gaussian variates) as implied in \eqref{eq:X_c}-\eqref{eq:X_a}. We will sample these afore-mentioned systems multiple times (independently) so will use an argument `$(i)$' to indicate the $i^{th}$-sample.  For instance,  from \eqref{eq:X_c},  we will write 
$\bar{X}_{t_k}^{c,[\ell]}(i)$ for the $i^{th}$-sample associated to recursion \eqref{eq:X_c} where the associated Brownian motion and Gaussians variates have been generated anew for each sample.  Similarly, in the context of \eqref{eq:p_def_l} 
%-\eqref{eq:p_def_0} 
we will write $\mathcal{P}^{\varphi}_{f,\ell}(i)$,  $\mathcal{P}^{\varphi}_{c,\ell-1}(i)$ and 
$\mathcal{P}^{\varphi}_{0}(i)$.

The \textcolor{black}{AMLMC} procedure is as follows.  We first set $L$ and the sample sizes $M_0,\ldots,M_L$ to be used at each pair of levels; we will state below how this can be done.  Then one can follow the approach in Algorithm \ref{alg:mlmc}. The new \textcolor{black}{AMLMC} estimator is given in \eqref{eq:ml_est} that is contained in Algorithm \ref{alg:mlmc} and can be computed using any test function of interest when the underlying quantity $\mathbb{E}\left[\varphi (X_T)\right]$ is well defined.

To specify $L$ and $M_0,\dots,M_L$ one can appeal to the results of Theorem \ref{thm:strong_err_coupl},
Corollary~\ref{cor:fn_bd}, as well as the weak error of the scheme (\ref{eq:scheme}) and follow standard computations in MLMC (e.g.~\cite{giles1}). That is,  when considering the MSE, 
$
\mathbb{E} \bigl[\bigl(\widehat{\mathbb{E}[\varphi(X_T)]}-\mathbb{E}[\varphi(X_T)]
\bigr)^2 \bigr],
$
then under the assumptions made above, one has an upper-bound on the MSE as 
$
\mathcal{O}\bigl(
\sum_{0 \le \ell \le L}  \Delta_{\ell}^2/{M_{\ell}} + \Delta_L^{4}
\bigr).
$
Therefore, for $\epsilon>0$ given, one can achieve a MSE of $\mathcal{O}(\epsilon^2)$ by choosing $L=\mathcal{O}(\log(\epsilon^{-{1}/{2}}))$ and $M_{\ell}=\mathcal{O}(\epsilon^{-2}\Delta_{\ell}^{3/2})$. The cost to achieve
this MSE is 
$
\textcolor{black}{\sum_{0 \le \ell \le L} \Delta_{\ell}^{-1}M_{\ell} = \mathcal{O}(\epsilon^{-2})}
$
which is the best possible using stochastic Monte Carlo methods and was also obtained in \cite{ml_anti}. In most practical simulations, one generally sets $L$ as on standard computing equipment it is not feasible to generate beyond $L=10$ and this determines $\epsilon$.  Therefore, as the bias (weak error) of this method is $\mathcal{O}(\Delta_L^2)$, versus $\mathcal{O}(\Delta_L)$ in the antithetic Milstein method in \cite{ml_anti}, one might expect to see benefits for $L$'s that are used in practice.  We consider this in Section \ref{sec:numerics}.

\begin{algorithm}[h]
\begin{enumerate}
\item{Input $L\geq 1$ and $M_0,\dots,M_L$.  Set $\ell=0$ and go to 2..}
\item{For $i=1,\dots,M_0$ independently simulate \eqref{eq:X_c} to produce $\bar{X}_{T}^{c,[0]}(1),\dots,\bar{X}_{T}^{c,[0]}(M_0)$. Set $\ell=\ell+1$ and go to 3..}
\item{
For $i=1,\ldots,M_{\ell}$,  independently simulate \eqref{eq:X_c}-\eqref{eq:X_a} to produce
$ \textstyle 
\{ \bar{X}_{T}^{c,[\ell-1]}(i) \}_{i = 1}^{M_\ell}$, 
$\{ \widetilde{X}_{T}^{c,[\ell-1]}(i) \}_{i = 1}^{M_\ell}$, 
$ \{ \bar{X}_{T}^{f,[\ell-1]}(i) \}_{i = 1}^{M_\ell}$,  
$\{ \widetilde{X}_{T}^{f,[\ell-1]}(i) \}_{i = 1}^{M_\ell}$. 
%\bigl( \bar{X}_{T}^{c,[\ell-1]}(1),\dots,\bar{X}_{T}^{c,[\ell-1]}(M_{\ell}) \bigr) \quad
%\bigl( \widetilde{X}_{T}^{c,[\ell-1]}(1),\dots,\widetilde{X}_{T}^{c,[\ell-1]}(M_{\ell}) \bigr)
%$$
%and
%$$
%\bigl(\bar{X}_{T}^{f,[\ell-1]}(1),\dots,\bar{X}_{T}^{f,[\ell-1]}(M_{\ell}) \bigr) \quad
%\bigl(\widetilde{X}_{T}^{f,[\ell-1]}(1),\dots,\widetilde{X}_{T}^{f,[\ell-1]}(M_{\ell}) \bigr).
If $\ell\leq L-1$, set $\ell=\ell+1$ go to the start of 3.~otherwise go to 4..}
\item{Compute the MLMC estimator:
\begin{align}
	\widehat{\mathbb{E}[\varphi(X_T)]} := \mathcal{P}^{\varphi,M_0}_{0} + \sum_{1 \le \ell \le L} \bigl\{
	\mathcal{P}^{\varphi,M_{\ell}}_{f,\ell} - 
	\mathcal{P}^{\varphi,M_{\ell}}_{c,\ell-1}
	\bigr\}\label{eq:ml_est}
\end{align}
where
$ 
\mathcal{P}^{\varphi,M_0}_{0}:= \tfrac{1}{M_0} \sum_{1 \le i \le M_0}
\mathcal{P}^{\varphi}_{0}(i), \ \  \mathcal{P}^{\varphi,M_{\ell}}_{f,\ell} :=  \tfrac{1}{M_{\ell}} \sum_{1 \le i \le M_{\ell}}\mathcal{P}^{\varphi}_{f,\ell}(i),  \ \   
\mathcal{P}^{\varphi,M_{\ell}}_{c,\ell-1} :=  \tfrac{1}{M_{\ell}} \sum_{1 \le i \le M_{\ell}}\mathcal{P}^{\varphi}_{c,\ell-1}(i). 
$ 
Return \eqref{eq:ml_est} and stop.}
\end{enumerate}
\caption{AMLMC using the weak second order scheme \eqref{eq:scheme}.}
\label{alg:mlmc}
\end{algorithm}
\section{Application to filtering}\label{sec:filtering}

\subsection{State-space model}

We consider a sequence of observations obtained sequentially and at unit times, $Y_1,Y_2,\dots$, $Y_k \in \textcolor{black}{\mathbb{R}^N}$, $k\in\mathbb{N}$. The assumption of unit times is mainly for simplicity of notation and any time grid could be considered.  Associated to this sequence is an unobserved diffusion process exactly of the type \eqref{eq:diff}.  For the data, we shall assume that, at any time $k\in\mathbb{N}$, $Y_k$ has a (bounded) positive probability density that depends only on the position, $X_k$, of the diffusion process at time $k$ and is denoted $g(x_k,y_k)$. We denote the transition kernel of the diffusion process over a unit time and starting at $z\in\mathbb{R}^N$ as $Q(z,\cdot)$, for instance
$
\textstyle 
\mathbb{E}[\varphi(X_1)] = \int_{\mathbb{R}^N}\varphi(x_1)Q(x,dx_1), 
$ 
where the expectation on the R.H.S. is w.r.t.~the law of the diffusion \eqref{eq:diff},  which we recall starts at $x\in\mathbb{R}^N$, and $\varphi:\mathbb{R}^N\rightarrow\mathbb{R}$ is bounded, measurable (the collection of such functions is denoted $\mathcal{B}_b(\mathbb{R}^N)$).

The object of interest is the filtering distribution.  For any $k\in\mathbb{N}$ we define the filtering expectation:
\begin{equation}
\label{eq:filt}
\pi_k(\varphi) := \tfrac{\mathbb{E}\bigl[\varphi(X_k)\bigl\{\prod_{p=1}^k g(X_p,y_p)\bigr\}\bigr]}{\mathbb{E}
\bigl[\bigl\{\prod_{p=1}^k g(X_p,y_p)\bigr\}\bigr]}.
\end{equation}
Note that the fact that $\varphi$ and $g(\cdot,y)$ are bounded (for any $y \in \textcolor{black}{\mathbb{R}^N} $) ensure that the filter is well-defined, but these assumptions are not needed in general -- again we seek to simplify the discussion.  We will compute a numerical approximation of \eqref{eq:filt} sequentially in time, as an exact computation is seldom possible.

In practice we often cannot \textcolor{black}{(i)} simulate from $Q(z,\cdot)$ and/or we may not have an 
 \textcolor{black}{(ii)} 
explicit expression for the density of $Q(z,\cdot)$ 
%(if it even exists) 
%and even if it does there may not be an unbiased estimate of the density.  
or \textcolor{black}{(iii)} an unbiased estimate of such density.
One of the afore-mentioned properties \textcolor{black}{(i)-(iii)} is needed in order to deploy numerical methods which are
used in the approximation of the filter \eqref{eq:filt} in continuous time (see e.g.~\cite{mlpf} for an explanation). 
Therefore we consider time discretization via
the weak second order method \eqref{eq:scheme}, with step-size $\Delta_{\ell}=2^{-\ell}$.  Now,  for any starting point $z\in\mathbb{R}^N$ and ending at a time 1 we denote the time discretised transition kernel as $Q^{[\ell]}(z,\cdot)$,  for instance,
$ \textstyle
\mathbb{E}^{[\ell]}[\varphi(\bar{X}_1)] = \int_{\mathbb{R}^N}\varphi(x_1)Q^{[\ell]}(x,dx_1), 
$
where we have modified the notation of the expectation operator to $\mathbb{E}^{[\ell]}[\cdot]$ to emphasize dependence on the discretization level.
We  consider the approximation of the time discretised filter, $k\in\mathbb{N}$:
\begin{equation}\label{eq:disc_filt}
\pi_k^{[\ell]}(\varphi) := \tfrac{\mathbb{E}^{[\ell]}\left[\varphi(X_k)\left\{\prod_{p=1}^k g(X_p,y_p)\right\}\right]}{\mathbb{E}^{[\ell]}\left[\left\{\prod_{p=1}^k g(X_p,y_p)\right\}\right]}.
\end{equation}
Note, to clarify,  the R.H.S.~of the above equation can be alternatively written as:
$$
\tfrac{\int_{\mathbb{R}^{Nk}}
\varphi(x_k)\left\{\prod_{p=1}^k g(x_p,y_p)\right\}\prod_{p=1}^k Q^{[\ell]}(x_{p-1},dx_p)
}{\int_{\mathbb{R}^{Nk}}\left\{\prod_{p=1}^k g(x_p,y_p)\right\}\prod_{p=1}^k Q^{[\ell]}(x_{p-1},dx_p)}
$$
where $x_0=x$. Even with time discretization, one still needs to resort to numerical methods to approximate \eqref{eq:disc_filt}.

\subsection{Multilevel particle filters}

Our objective is now to approximate the time discretised filter \eqref{eq:disc_filt}.  We start with the ordinary
particle filter (PF) which can do exactly the former task and is described in Algorithm \ref{alg:pf}. This algorithm presents the most standard and well-known PF with several possible extensions.  Also note that the estimates of the filter, in equation \eqref{eq:pf_est} of Algorithm \ref{alg:pf}, are typically returned recursively in time.

The PF on its own is typically much less efficient than using a multilevel version,  which has been developed and extended in several works; see 
e.g.~\cite{mlpf,mlpf1,levymlpf,ub_pf,anti_mlpf} and \cite{ml_rev} for a review.  We describe the method of \cite{mlpf}, except replacing the Euler-Maruyama discretization with the weak second order scheme.  The basic idea is based upon the identity:
\begin{align}\label{eq:ml_filt_id}
\pi^{[L]}_k(\varphi) = \pi^{[0]}_k(\varphi) + \sum_{1 \le \ell \le L} \bigl\{\pi^{[\ell]}_k(\varphi)-\pi^{[\ell-1]}_k(\varphi)\bigr\}.
\end{align}
We remark that on the R.H.S.~of \eqref{eq:ml_filt_id} one need not start at level 0, but we adopt this choice for ease of exposition.
The idea is to use the PF to recursively approximate $\pi^{[0]}_k(\varphi)$ and then to use a coupled particle filter (CPF) for the approximation of $\pi^{[\ell]}_k(\varphi)-\pi^{[\ell-1]}_k(\varphi)$, independently for each index $\ell$.
The coupling is described in Algorithm \ref{alg:max_couple}  and then the CPF is given in Algorithm \ref{alg:cpf}, which are presented in Section \ref{sec:supp_algo} in the Appendix. 

Algorithm \ref{alg:max_couple} presents a way to simulate a maximal coupling of two positive probability mass functions with the same support. It allows one to couple the resampling operation across two different levels of discretization as is done for a single level in Algorithm \ref{alg:pf}. This is then incorporated in Algorithm \ref{alg:cpf} which provides a way to approximate $\pi^{[\ell]}_k(\varphi)-\pi^{[\ell-1]}_k(\varphi)$ recursively in time. 

The overall multilevel Particle Filter (MLPF) can be summarized as follows, given $L$ the maximum level and the number of samples $M_0,\dots,M_L$; we show how these parameters can be chosen below.
\begin{enumerate}
\item{Run Algorithm \ref{alg:pf} at level $\ell=0$ with $M_0$ samples.}
\item{Independently of 1.~for $\ell=1,\dots,L$, independently run Algorithm \ref{alg:cpf} in the Appendix with $M_{\ell}$ samples.}
\end{enumerate}
Based on this process, a biased approximation of $\pi_k(\varphi)$ is then
$$
\widehat{\pi_k(\varphi)} := \pi^{[0],M_0}_k(\varphi) + \sum_{1 \le \ell \le L}
\bigl\{\pi^{[\ell],M_{\ell}}_k(\varphi)-\pi^{[\ell-1],M_{\ell}}_k(\varphi)\bigr\},
$$
\textcolor{black}{where $\pi_k^{[\ell],M}(\varphi)$ is the PF estimate of $\pi_k^{[\ell]}(\varphi)$ with the number of particles $M$ specifically given in (\ref{eq:pf_est})}. 
The bias of this approximation is from the discretization level $L$ and the bias of the PF/CPF approximation, 
e.g.~that in general, 
$
\mathbb{E} \bigl[\pi^{[\ell],M_{\ell}}_k(\varphi)-\pi^{[\ell-1],M_{\ell}}_k(\varphi) \bigr] 
\neq \pi^{[\ell]}_k(\varphi)-\pi^{[\ell-1]}_k(\varphi)
$,  where $\mathbb{E}$ is used to denote the expectation w.r.t.~the probability law used in generating our estimators.
Now, if one combines the theory in \cite{iguchi2} for the weak error, the strong error result in Proposition \ref{prop:s_rate} and the results in \cite{mlpf} one can consider the MSE,
$
\mathbb{E}\bigl[\bigl(\widehat{\pi_k(\varphi)}-\pi_k(\varphi)\bigr)^2\bigr]
$. 
Under the assumptions in the current paper and in \cite{mlpf} it can be proved that the MSE has an upper-bound which is:
\begin{equation} \label{eq:mlpf_bound}
\mathcal{O}\bigl(\sum_{0 \le \ell \le L} \Delta_\ell^{{1}/{2}}/{M_\ell} + \Delta_L^{4}\bigr).
\end{equation}
We do not prove this bound as it is a fairly trivial application of the results in the afore-mentioned papers. The exponent of
$\Delta_\ell$, in the summand, is $1/2$ and this reduction of the strong error of Euler-Maruyama is due to the resampling mechanism that has been employed; we do not know of any general method that can maintain the strong error rate. We also remark that there is an additional additive term on the R.H.S., but this term is much smaller than the term given above, so we need not consider it.  
Using the standard approach that has been adopted in MLMC (i.e.~as discussed in Section \ref{sec:mlmc_for}) one can show that for $\epsilon>0$ given, setting $L=\mathcal{O}(\log(\epsilon^{-{1}/{2}}))$, $M_{\ell}=\epsilon^{-2}\Delta_{\ell}^{3/4}\Delta_L^{-1/4}$ gives a MSE of $\mathcal{O}(\epsilon^2)$ for a cost (per time step $k$) of $\mathcal{O}(\epsilon^{-2.25})$. This is lower than the cost of the approach in \cite{mlpf} due to the increased weak error relative to the Euler-Maruyama discretization used in \cite{mlpf}.

In the recent work of \cite{anti_mlpf}, the authors show how to use the antithetic Milstein scheme within the context of the MLPF; we abbreviate to AMMLPF (antithetic Milstein MLPF).  They show empirically that to achieve a MSE (associated to their estimator) of  $\mathcal{O}(\epsilon^2)$ there is a cost (per time step $k$) of $\mathcal{O}
\bigl( \epsilon^{-2}\log(\epsilon)^2 \bigr)$. The objective now is to show how our new antithetic MLMC method can be extended to MLPFs.
As in the case of MLMC, we expect for this new method the error-cost calculation to be of the same order as the AMMLPF,  but when using smaller $L$, as would be adopted in practice, that improvements are seen in simulations,  due to the decreased weak error.

\begin{algorithm}[h]
\begin{enumerate}
\item{Input: level of discretization $\ell\in\mathbb{N}_0$, final time $T\in\mathbb{N}$ and number of samples $M$. Set $\bar{X}_0^{[\ell]}(i)=x$, $i=1,\dots,M$ and $k=1$. Go to 2..}
\item{Sampling: For $i=1,\dots,M$, simulate $\bar{X}_k^{[\ell]}(i)|\bar{x}_{k-1}^{[\ell]}(i)$ using the dynamics \eqref{eq:scheme} up-to time 1, with starting point $\bar{x}_{k-1}^{[\ell]}(i)$ and step-size $\Delta_{\ell}$. Go to 3..}
\item{Resampling: For $i=1,\dots,M$ compute:
$ \textstyle
w_k^{[\ell]}(i) := \tfrac{g(\bar{X}_{k}^{[\ell]}(i),y_k)}{\sum_{j=1}^M g(\bar{X}_{k}^{[\ell]}(j),y_k)}.
$
For any $\varphi\in\mathcal{B}_b(\mathbb{R}^{N})$ we have the estimate:
\begin{equation}\label{eq:pf_est}
	\pi_k^{[\ell],M}(\varphi) := \sum_{1 \le i \le M} w_k^{[\ell]}(i)\varphi(\bar{X}_k^{[\ell]}(i)).
\end{equation}
For $i=1,\dots,M$ sample an index $j(i)\in\{1,\dots,M\}$ using the probability mass function $w_k^{[\ell]}(\cdot)$ and set $\check{X}_k^{[\ell]}(i)=\textcolor{black}{\bar{X}_k^{[\ell]}(j(i))}$.
For $i=1,\dots,M$, set $\bar{X}_k^{[\ell]}(i)=\check{X}_k^{[\ell]}(i)$, $k=k+1$, if $k=T+1$ go to 4.~otherwise go to 2..}
\item{Return the estimates $\pi_1^{[\ell],M}(\varphi),\dots,\pi_T^{[\ell],M}(\varphi)$ from \eqref{eq:pf_est}.}
\end{enumerate}
\caption{Particle Filter using the weak second order scheme \eqref{eq:scheme}. The algorithm is stopped at a time $T$, but need not be.}
\label{alg:pf}
\end{algorithm}

\subsection{New multilevel particle filter}

Our new MLPF,  which we shall call the antithetic multilevel Particle Filter (AMLPF), is similar to the approach that was illustrated in the previous section. At level 0, we shall use a PF to approximate $\pi_k^{[0]}(\varphi)$. 
To approximate the differences $\pi_k^{[\ell]}(\varphi)-\pi_k^{[\ell-1]}(\varphi)$ we shall use a combination of the 
antithetic MLMC weak second order scheme of Section \ref{sec:weak_mlmc_scheme}, which will be the `sampling' part of a PF and a type of `coupling' for the `resampling step'.  As we have already introduced the former, we introduce the latter as Algorithm \ref{alg:max_couple_four} in Section \ref{sec:supp_algo} in the Appendix.  As has been commented by \cite{ub_pf} in the context of coupling two probability mass functions (as in Algorithm \ref{alg:max_couple}) there is nothing that is optimal about using Algorithm \ref{alg:max_couple_four}.  Indeed, when used as part of a MLPF, we expect just as in the case of Algorithm \ref{alg:max_couple} when used for Algorithm \ref{alg:cpf}, the strong error rate from the forward problem is reduced by a factor of two; see \eqref{eq:mlpf_bound}.  It remains an open problem to find a general coupling method which can maintain the forward error rate (as was the case in \cite{ballesio} in dimension 1 only) and a linear complexity in terms of the samples $M$.

Given Algorithm \ref{alg:max_couple_four}, we are now in a position to give our new coupled particle filter in 
Algorithm \ref{alg:new_cpf}.  Just as in the previous section, the AMLPF can be summarized as follows, given $L$ the maximum level and the number of samples $M_0,\dots,M_L$; we show how these parameters can be chosen below.
\begin{enumerate}
\item{Run Algorithm \ref{alg:pf} at level $\ell=0$ with $M_0$ samples.}
\item{Independently of 1.~for $\ell=1,\dots,L$, independently run Algorithm \ref{alg:new_cpf} with $M_{\ell}$ samples.}
\end{enumerate}
Thus our new approximation of $\pi_k(\varphi)$ is:
$$
\widetilde{\pi_k(\varphi)} :=\pi^{[0],M_0}_k(\varphi) + \sum_{1 \le \ell \le L} 
\bigl\{\hat{\pi}^{[\ell],M_{\ell}}_k(\varphi)-\hat{\pi}^{[\ell-1],M_{l}}_k(\varphi)\bigr\}.
$$
where we recall that $\hat{\pi}^{[\ell],M_{\ell}}_k(\varphi)-\hat{\pi}^{[\ell-1],M_{l}}_k(\varphi)$ is given in 
\eqref{eq:new_cpf_est} in Algorithm \ref{alg:new_cpf}.

We can again consider the MSE 
$
\mathbb{E}\bigl[\bigl(\widetilde{\pi_k(\varphi)}-\pi_k(\varphi)\bigr)^2\bigr].
$
As noted in \cite{anti_mlpf}, which considers the AMMLPF,  although it is fairly easy to establish a bound
on the R.H.S.~which is of the type (up-to some other terms which are smaller)
$
\mathcal{O} (\sum_{0 \le \ell \le L} {\Delta_l^{\nu}}/{M_l} + \Delta_L^{4}), 
$
obtaining the value of $\nu$ that is observed in simulation is not easy to achieve with the current proof method that has been adopted in \cite{mlpf,anti_mlpf}.  As a result, we do not give a theoretical analysis in this paper.  However, as we shall see in Section \ref{sec:numerics}, it appears that the correct value of $\nu=1$ and hence we use this as our guideline to choose $L,M_0,\dots,M_L$.  Following the arguments that have been used previously, for $\epsilon>0$ given, setting $L=\mathcal{O}(\log(\epsilon^{-{1}/{2}}))$, $M_{\ell}=\epsilon^{-2}\Delta_{\ell}L$ gives a MSE of $\mathcal{O}(\epsilon^2)$ for a cost (per time step $k$) of $\mathcal{O} \bigl( \epsilon^{-2}\log(\epsilon)^2 \bigr)$.  

%\begin{algorithm}[h]
%	\begin{enumerate}
%		\item{Input $M\in\mathbb{N}$ the cardinality of the state-space and four positive probability mass functions
%			$W^1(1),\dots,W^1(M),\dots,W^4(1),\dots,W^4(M)$ on $\{1,\dots,M\}$.  Go to 2..}
%		\item{Sample $U\sim\mathcal{U}_{[0,1]}$. If $U<\sum_{i=1}^M\min\{W^1(i),W^2(i),W^3(i),W^4(i)\}$ go to 3.~otherwise go to 4..}
%		\item{Sample an index $i_1$ using the probability mass function
%			$$
%			\mathbb{P}(i_1) = \tfrac{\min\{W^1(i_1),W^2(i_1),W^3(i_1),W^4(i_1)\}}{\sum_{j_1=1}^M\min\{W^1(j_1),W^2(j_1),W^3(j_1),W^4(j_1)\}}
%			$$
%			set $i_4=i_3=i_2=i_1$ and go to 5..}
%		\item{Sample the indices $(i_1,\dots,i_4)$ using the probability mass function
%			$$
%			\mathbb{P}(i_1,\dots,i_4) = \prod_{1 \le j \le 4} \tfrac{W^j(i_j)-\min\{W^1(i_j),W^2(i_j),W^3(i_j),W^4(i_j)\}}{1-\sum_{j_1=1}^M\min\{W^1(j_1),W^2(j_2),W^3(j_1),W^4(j_1)\}}
%			$$
%			and go to 5..}
%		\item{Return the indices $(i_1,\dots,i_4)\in\{1,\dots,M\}^4$.}
%	\end{enumerate}
%	\caption{Simulating a Four-Way Coupling.}
%	\label{alg:max_couple_four}
%\end{algorithm}

\begin{algorithm}[h]
\begin{enumerate}
\item{Input: level of discretization $\ell\in\mathbb{N}$, final time $T\in\mathbb{N}$ and number of samples $M$. Set $\bar{X}_0^{c,[\ell-1]}(i)=\widetilde{X}_0^{c,[\ell-1]}(i)=\bar{X}_0^{f,[\ell]}(i)=\widetilde{X}_0^{f,[\ell]}(i)=x$, $i=1,\dots,M$ and $k=1$. Go to 2..}
\item{Sampling: For $i=1,\dots,M$, simulate 
$$
\bigl(\bar{X}_k^{c,[\ell-1]}(i),\widetilde{X}_k^{c,[\ell-1]}(i),\bar{X}_k^{f,[\ell]}(i),
\widetilde{X}_k^{f,[\ell]}(i)\bigr)|
\bigl(\bar{x}_{k-1}^{c,[\ell-1]}(i),\widetilde{x}_{k-1}^{c,[\ell-1]}(i),\bar{x}_{k-1}^{f,[\ell]}(i),
\widetilde{x}_{k-1}^{f,[\ell]}(i) \bigr)
$$ 
using the coupled dynamics \eqref{eq:X_c}-\eqref{eq:X_a} up-to time 1, with:
\begin{itemize}
	\item{starting point $\bar{x}_{k-1}^{c,[\ell-1]}(i)$, step-size $\Delta_{\ell-1}$ for \eqref{eq:X_c}}
	\item{starting point $\widetilde{x}_{k-1}^{c,[\ell-1]}(i)$, step-size $\Delta_{\ell-1}$ for \eqref{eq:X_c_a}}
	\item{starting point $\bar{x}_{k-1}^{f,[\ell]}(i)$ and step-size $\Delta_{\ell}$ for \eqref{eq:X_f}}
	\item{starting point $\widetilde{x}_{k-1}^{f,[\ell]}(i)$ and step-size $\Delta_{\ell}$ for \eqref{eq:X_a}.}
\end{itemize}
Go to 3..}
\item{Resampling: For $i=1,\dots,M$ compute
\begin{align*}
	& w_k^{c,[\ell-1]}(i) := \tfrac{g(\bar{X}_{k}^{c,[\ell-1]}(i),y_k)}{\sum_{j=1}^M g(\bar{X}_{k}^{c,[\ell-1]}(j),y_k)}, \  \ 
	w_k^{f,[\ell]}(i) := \tfrac{g(\bar{X}_{k}^{f,[\ell]}(i),y_k)}{\sum_{j=1}^M g(\bar{X}_{k}^{f,[\ell]}(j),y_k)};  \\
	& \widetilde{w}_k^{c,[\ell-1]}(i) := \tfrac{g(\widetilde{X}_{k}^{c,[\ell-1]}(i),y_k)}{\sum_{j=1}^M g(\widetilde{X}_{k}^{c,[\ell-1]}(j),y_k)}, \  \ 
	\widetilde{w}_k^{f,[\ell]}(i) := \tfrac{g(\widetilde{X}_{k}^{f,[\ell]}(i),y_k)}{\sum_{j=1}^M g(\widetilde{X}_{k}^{f,[\ell]}(j),y_k)}.
\end{align*} 
For any $\varphi\in\mathcal{B}_b(\mathbb{R}^{N})$ we have the estimate:
\begin{align}
	& \hat{\pi}_k^{[\ell],M}(\varphi) - \hat{\pi}_k^{[\ell-1],M}(\varphi) := 
	\tfrac{1}{2}\sum_{1 \le i \le M} \left\{w_k^{f,[\ell]}(i)\varphi(\bar{X}_k^{f,[\ell]}(i)) + \widetilde{w}_k^{f,[\ell]}(i)\varphi(\widetilde{X}_k^{f,[\ell]}(i))\right\}
	\nonumber \\ 
	&
	\qquad 
	- \tfrac{1}{2}
	\sum_{1 \le i \le M} \left\{w_k^{c,[\ell-1]}(i)\varphi(\bar{X}_k^{c,[\ell-1]}(i)) + %\nonumber \\ & &
	\widetilde{w}_k^{c,[\ell-1]}(i)\varphi(\widetilde{X}_k^{c,[\ell-1]}(i))\right\}. \label{eq:new_cpf_est}
\end{align}
For $i=1,\dots,M$ sample 
indices $\left(j^{c,[\ell-1]}(i),\widetilde{j}^{c,[\ell-1]}(i),j^{f,[\ell]}(i),\widetilde{j}^{f,[\ell]}(i)\right)\in\{1,\dots,M\}^4$
using Algorithm \ref{alg:max_couple_four} in Appendix with probability mass functions $(w_k^{c,[\ell-1]}(\cdot),\widetilde{w}_k^{c,[\ell-1]}(\cdot),w_k^{f,[\ell]}(\cdot),\widetilde{w}_k^{f,[\ell]}(\cdot))$, cardinality $M$
and set 
%			$\check{X}_k^{c,[\ell-1]}(i)=\bar{X}_k^{c,[\ell-1]}(j^{c,[\ell-1]}(i))$,  
%			$\acute{X}_k^{c,[\ell-1]}(i)=\widetilde{X}_k^{c,[\ell-1]}(\widetilde{j}^{c,[\ell-1]}(i))$,
%			$\check{X}_k^{f,[\ell]}(i)=\bar{X}_k^{f,[\ell]}(j^{f,[\ell]}(i))$, 
%			$\acute{X}_k^{f,[\ell]}(i)=\widetilde{X}_k^{f,[\ell]}(\widetilde{j}^{f,[\ell]}(i))$.

$$\check{X}_k^{c,[\ell-1]}(i)=\bar{X}_k^{c,[\ell-1]}(j^{c,[\ell-1]}(i)),  \ \ 
\acute{X}_k^{c,[\ell-1]}(i)=\widetilde{X}_k^{c,[\ell-1]}(\widetilde{j}^{c,[\ell-1]}(i));$$ 
$$ \check{X}_k^{f,[\ell]}(i)=\bar{X}_k^{f,[\ell]}(j^{f,[\ell]}(i)),  \ \ 
\acute{X}_k^{f,[\ell]}(i)=\widetilde{X}_k^{f,[\ell]}(\widetilde{j}^{f,[\ell]}(i)). $$
For $i=1,\dots,M$, set $\bar{X}_k^{c,[\ell-1]}(i)=\check{X}_k^{c,[\ell-1]}(i)$,
$\widetilde{X}_k^{c,[\ell-1]}(i)=\acute{X}_k^{c,[\ell-1]}(i)$, 
$\bar{X}_k^{f,[\ell]}(i)=\check{X}_k^{f,[\ell]}(i)$, 
$\widetilde{X}_k^{f,[\ell]}(i)=\acute{X}_k^{f,[\ell]}(i)$.  
Set $k=k+1$, if $k=T+1$ go to 4.~otherwise go to 2..}
\item{Return the estimates $\hat{\pi}_1^{[\ell],M}(\varphi) - \hat{\pi}_1^{[\ell-1],M}(\varphi),\dots,\hat{\pi}_T^{[\ell],M}(\varphi) - \hat{\pi}_T^{[\ell-1],M}(\varphi)$ from \eqref{eq:new_cpf_est}.}
\end{enumerate}

\caption{New Coupled Particle Filter using the antithetic weak second order scheme. The algorithm is stopped at a time $T$, but need not be.}
\label{alg:new_cpf}
\end{algorithm} 
\section{Numerical results}\label{sec:numerics}
In this section, we provide a series of numerical illustrations detailing our methodology for both forward and filtering problems. 
% through the implementation of the new AMLMC and AMLPF algorithms. 
Specifically, we compare their performance against both multilevel and standard Monte Carlo (Std MC) methods and particle filters. 
%We will outline and rigorously test these algorithms upon a neuroscience and a financial model. Through this exploration, we highlight the advantages of using both AMLMC and AMLPF, showcasing their benefits.
% 
We here summarise the labels of the algorithms that we use in the numerics: 
%that follow:
% 
\begin{itemize}
\item Forward problem: Std MC, MLMC (standard method with scheme (\ref{eq:scheme})), AMLMC (the new antithetic method with scheme (\ref{eq:scheme})) and AMMLMC (the antithetic method of \cite{ml_anti} with scheme (\ref{eq:t_mil})).
\item Filtering problem: PF, MLPF (standard method, using scheme (\ref{eq:scheme})), AMLPF (the new antithetic PF method with scheme (\ref{eq:scheme})) and AMMLPF (the antithetic PF method studied in \cite{anti_mlpf} with scheme (\ref{eq:t_mil})).
\end{itemize}
\subsection{Models}
We consider two SDE models in our experiments. The first model is the stochastic FitzHugh-Nagumo (FHN) model, which is a well-known hypo-elliptic model in neuroscience:  
% is a simplified two-dimensional model derived from the Hodgkin-Huxley (HH) model for spike generation. 
%, introduced by Richard FitzHugh and Jinichi Nagumo. 
%Unlike the more complex HH equations, the FHN equations offer simplicity while effectively elucidating neuronal dynamics. 
% The aim was to isolate the fundamental mathematical elements responsible for excitation and transmission properties, detaching them from the intricacies of sodium and potassium biophysics analysis. 
%Mathematically, the FHN model is a system of first-order nonlinear ordinary differential equations with two coupled equations, one governing a voltage-like variable $X_t$ having a cubic nonlinearity and a recovery variable $Y_t$. 
%It can be presented as:
% with the addition of white noise $ \{B_t\}_{t \geq 0}$ as:
\begin{equation*}
%	\left\{\begin{array}{lllclcll}
%		dX_t= \frac{1}{\epsilon} \big(X_t - X_{t}^{3} -  Z_t - s \big) \; dt ,\vspace{2mm}\\
%		dZ_t =   ( \gamma X_t - Z_t + \beta ) \; dt  + \sigma dB_t^1 .
%	\end{array}\right.
dX_t= \tfrac{1}{\epsilon} \big(X_t - X_{t}^{3} -  Z_t - s \big) \; dt , 
\quad 
dZ_t =   ( \gamma X_t - Z_t + \beta ) \; dt  + \sigma dB_t^1 .
\label{FHN}
\end{equation*}
The  values of the parameters in the simulations are set as follows:  
${X_{t_0} = 0} $,  $Y_{t_0} = 0, $  $ \epsilon = 0.1 $,  $ \sigma = 0.3$, $ \gamma = 1.5$,  $ \beta = 0.3$ and  $ s = 0.01 $. 
%  \textcolor{black}{from which one implies that the  model can be considered as lying within the class of small-noise diffusions (\ref{eq:small_diffusion})}. 
For the forward problem, we estimate the value of $\mathbb{E} [X_T]$ with $T=100$ time units. 
For the filtering case, we estimate
% the value of the filtering distribution 
 $\mathbb{E} [ X_n | y_{0:n}]$ with $n = 100$. The observation data $y_k$ we choose is $y_k \mid( X_{k\delta},  Z_{k\delta} )\sim \mathcal{N}( X_{k\delta} ,\tau^2)$ with $\delta = 1, \, \tau = 0.1$, where 
$\mathcal{N}(m, \sigma^2)$ denotes the Gaussian distribution of mean $m$ and variance $\sigma^2$.

%\begin{table}%[H]
%	\caption{Parameter choices of the FHN model.}
%	\begin{center}
%		\begin{tabular}{ |c| c| c| c|c| c| c| c| } 
%			\toprule
%			${X_{t_0} = 0}\, \text{mV}$ & $Y_{t_0} = 0 $   &  $ \epsilon = 0.1 $  & $ \sigma = 0.3$ 
%			& $ \gamma = 1.5$ & $ \beta = 0.3$   & $ s = 0.01 $ & $T = 100.0$
%			\\
%			\bottomrule 
%		\end{tabular}
%		\label{tab:fhn}
%	\end{center}
%\end{table}

% \begin{figure}%[H]
% \centering
% \subfigure{\includegraphics[width=12cm,height=5.5cm]{Voltage.pdf}}
% % \subfigure{\includegraphics[width=10cm,height=4.0cm]{phase.pdf}}
% \caption{Simulation of the stochastic FHN model via weak second-order scheme. Time evolution of the membrane potential $X_t$ and the adaptation variable $Y_t$.}
% % Top: time evolution of the membrane potential $X_t$ and the adaptation variable $Y_t$. Bottom: phase portrait $(X,Y)$}
% \label{fig:Sim}
% \end{figure}
% 
% \subsubsection{Heston model}

The second model example is the Heston model \cite{Heston} 
%widely recognized in finance as an asset price model with stochastic volatility, making it one of the most popular SDEs for %European 
%option pricing. 
%Governed by two coupled SDEs, 
given as an elliptic SDE not satisfying the commutative condition (\ref{eq:comm}): 
\begin{equation*}
%\left\{\begin{array}{lllclcll}
%	d S_t  = r S_t dt + \sqrt{v_t} S_t dB_{t}^1 ,\vspace{2mm}\\
%	d v_t  = \alpha (\theta - v_t) dt 
%	+  \mu \sqrt{v_t} ( \rho dB_{t}^1 + \sqrt{1 - \rho^2} dB_{t}^2).
%\end{array}\right.
d S_t  = r S_t dt + \sqrt{v_t} S_t dB_{t}^1 ,  \quad 
d v_t  = \alpha (\theta - v_t) dt 
+  \mu \sqrt{v_t} ( \rho dB_{t}^1 + \sqrt{1 - \rho^2} dB_{t}^2). 
\end{equation*} 
% Where $ \{B_{1,t} \}_{t \in [0,1]}$ and $\{ B_{2,t}\}_{t \in [0,1]}$ are two one-dimensional independent Brownian motions. 
The values of the parameters used in the simulations are set as: 		
$S_{t_0} = 100$, $v_{t_0} = 0.09 $,  $r= 0.04$, $\alpha = 2.0$,  $\theta = 0.09$,   $\mu = 0.1$ and  $\rho = 0.7$.
For the forward problem, our target quantity is $\mathbb{E} [ S_T]$ with $T = 1.0$.
For the filtering case, we estimate $\mathbb{E} [S_{n} | y_{0:n}]$ with $n = 100$, where each observation $y_k$ is obtained as  
$y_k \mid( S_{k\delta},  v_{k\delta} )\sim \mathcal{N}(  S_{k\delta} ,\tau^2)$ with $\delta = 0.01$ and $\tau = 2$.
\textcolor{black}{We stress here that 
	in the above model settings, the test functions are unbounded for the filtering problem, while we have assumed boundedness in Section \ref{sec:filtering}. As we will show in the numerical results below, such a discrepancy can be negligible under  suitable scenarios; e.g. the case where the moments of underlying process are uniformly bounded in the time-interval. 
	}
%\begin{table}%[H]
%\caption{Parameter choices of the Heston model.}
%\label{tab:heston}
%\begin{center}
%	\begin{tabular}{ | c| c| c| c |c| c| c| c |} 
%		\toprule 
%		$S_{t_0} = 100$ & $v_{t_0} = 0.09 $ 
%		&  $r= 0.04$   &  $\alpha = 2.0$  & $\theta = 0.09$
%		& $\mu = 0.1$ & $\rho = 0.7$   & $T = 1.0$ 
%		\\
%		\bottomrule 
%	\end{tabular}
%\end{center}
%\end{table}
% 
\subsection{Set-Up and results}

For our numerical experiments, we applied our algorithms to obtain the multilevel estimators. 
% at levels $L \in \{1,2,3,4\}$. 
Given the unavailability of an analytical solution, \textcolor{black}{we will use std MC and PF with a high-resolution $L = 9$ 
% a high-resolution simulation of the single level 
to approximate the ground truth for the forward and filtering problem, respectively} that shall serve as the benchmark solution. 
%  for the forward \textcolor{black}{and} filtering problems.
%our simulations.
%
For the filtering problem, 
\textcolor{black}{though we did not discuss about stochastic resampling for our proposed AMLPF in Section \ref{sec:filtering}, we will run particle filters with adaptive resampling to showcase the practical extendability of AMLPF.} Specifically, resampling is performed when the effective sample size (ESS) is less than $\tfrac{1}{2}$ of the particle numbers. For the coupled filters, we use the ESS of the coarse filter as the measurement of discrepancy.  The error within the estimators in our simulations will be evaluated using the mean square error (MSE), which will be computed by conducting $50$ independent simulations for each method (Std MC, MLMC, AMLMC and AMMLMC) for the forward problem, and (PF, MLPF, AMLPF and AMMLPF) for the filtering case with the ground truth obtained as described above. 

The primary target is to compare the costs of these methods at the same MSE level. In the AMLMC and AMLPF, one needs to determine the number of samples to approximate the multilevel estimators at levels $\ell$ and $\ell-1$, denoted by $M_\ell$.
% as specified in the paper. 
% We set for a given $\epsilon>0$, $L=\mathcal{O}(|\log(\epsilon^{-0.5})|)$, $L\in\mathbb{N}$. 
In particular, we set $M_\ell$ for the AMLMC and AMLPF as 
$M_\ell = c_{1, \ell} \times \varepsilon^{-2} \Delta_\ell^{3/2}$ and $M_\ell  = c_{2, \ell} \times \varepsilon^{-2} \Delta_\ell L$, respectively, for some constants $c_{1, \ell}, c_{2, \ell} >0$ and a given $L$ to attain a target MSE of $\mathcal{O} (\varepsilon^2)$, $\varepsilon > 0$, with a cost of $\mathcal{O} (\varepsilon^{-2})$ for AMLMC and $\mathcal{O}(\epsilon^{-2}\log(\epsilon)^2)$ for AMLPF. 
For the AMMLMC and AMMLPF, we also choose $M_\ell$ as above. 
% In practice we suppose that at level $L$, $\text{MSE} = \mathcal{O} (2^{-2L}) = \text{Var}_L + \text{B}_{L}^2$, where $\text{B}_{L}$ is bias when estimating the filter and $\text{Var}_L$ is variance. 
% The computational cost of these numerical models is determined by the theoretical count of operations required to compute each approximation. 
% This is typically calculated by integrating a step counter within the numerical implementation process. 
In our experiments, we initially simulate the Std MC and PF algorithms with $L \in \{1,2,3,4\}$ and obtain the corresponding MSE and cost values, where the computational cost 
%corresponding to each MSE
is computed as $ \sum_{\ell = 0}^{L} {M_\ell}/{\Delta_\ell}$.  
Subsequently, we use the MLMC and MLPF estimators to achieve identical MSE levels and record their corresponding cost values. 
Finally, we compute the AMLMC and AMLPF estimators to attain similar MSE levels and note their respective cost values. Due to the lower order of weak convergence, the AMMLMC and AMMLPF estimators are computed with $L = \{2, 4, 6, 8\}$.  
% For the AMMLMC, this approach aligns with the methodology outlined in reference \cite{giles}, wherein the optimal number of samples at each level is determined proportionally to $\sqrt{V_l C_l^{-1}}$. 
% Here, $V_l$ represents the variance across multiple levels, while $C_l$ denotes the cost associated with a single sample at level $l$. For the AMMLPF algorithm, we set the number of samples $M_l$ as  $M_l = \mathcal{O} (\epsilon^{-2} \Delta_l \; L)$, where $l \in \{1,..., L\}$. In our simulations, we explore scenarios where $L$ takes values from the set $ \{2,4,6,8 \}$.

We present our numerical simulations to show the benefits of applying AMLMC/AMLPF to the above SDE models, compared to Std MC, MLMC, AMMLMC/PF, MLPF, AMMLPF. 
Figures \ref{fig:MSEvsCost1}-\ref{fig:MSEvsCost2} show the MSE against the cost. %to present the cost rates through each targeted MSE.  
% $\mathbb{E}[\phi(X_{n \delta}) | y_{1:n}]$,. 
The figures show that as we increase the levels from $L=1$ to $L=4$, the difference in the cost between the methods also increases. 
% These figures show the advantage and accuracy of using the AMLMC and AMLPF. 
Table \ref{tab:mse} presents the \textcolor{black}{estimated change rates of $\log$(cost) against $\log$(MSE)} for both problems. \textcolor{black}{The reported rates align} with our theoretical expectations. 
We observe that the  computational costs are \textcolor{black}{of sizes consistent to the theoretical ones of} $\mathcal{O}(\epsilon^{-3})$ for the Std MC and PF, $\mathcal{O}(\epsilon^{-2})$ for the AMLMC, and $\mathcal{O}(\epsilon^{-2} \log (\epsilon)^2)$ for the AMLPF. 
% In comparing the MSE against the computational cost, between the antithetic weak second-order scheme and the antithetic Milstein algorithms, 
% the setting of the parameter $L$ plays a crucial role, particularly in Milstein's case. With paths nearly identical in both algorithms, the lower cost arises from the choice of $L$. 
Moreover, we see from the bottom two plots of Figures \ref{fig:MSEvsCost1}-\ref{fig:MSEvsCost2} that AMLMC/AMLPF (using the weak second order scheme) outperformed AMMLMC/AMMLPF (using the truncated Milstein scheme) in terms of \textcolor{black}{cost vs MSE}. 
We note that when choosing the number of samples $M_\ell$ in the experiments, the constants $c_{1, \ell}$ and $c_{2, \ell}$ to determine $M_\ell$ (indicated above) are allowed to be set lower for the case of the weak second order scheme compared with that of the truncated Milstein scheme. 
We expect this is due to the tighter variance bounds for the couplings of the AMLMC under a \textcolor{black}{small-noise} diffusion setting, i.e. the case some small parameter is contained in the diffusion coefficient, which we detail in Section \ref{sec:B} in Appendix. 
%\textcolor{red}{Section \ref{sec:sd}}. 
% the lower number of samples are This leads to the clear differences between AMLMC/AMLPF and AMMLMC/AMMLPF.   
% This can be observed through Figures \ref{fig:MSEvsCost1} - \ref{fig:MSEvsCost2} and Table \ref{tab:mse}. 
% Moreover, it's worth noting that while the antithetic weak second order achieves comparable MSE to its Milstein counterpart, the computational cost of Antithetic Milstein is notably higher. This difference demonstrates the improvement of the antithetic weak second order over Milstein, in terms of MSE vs. cost.

\begin{figure}[h]
\centering
\subfigure{\includegraphics[width=7.5cm, height=4.1cm]{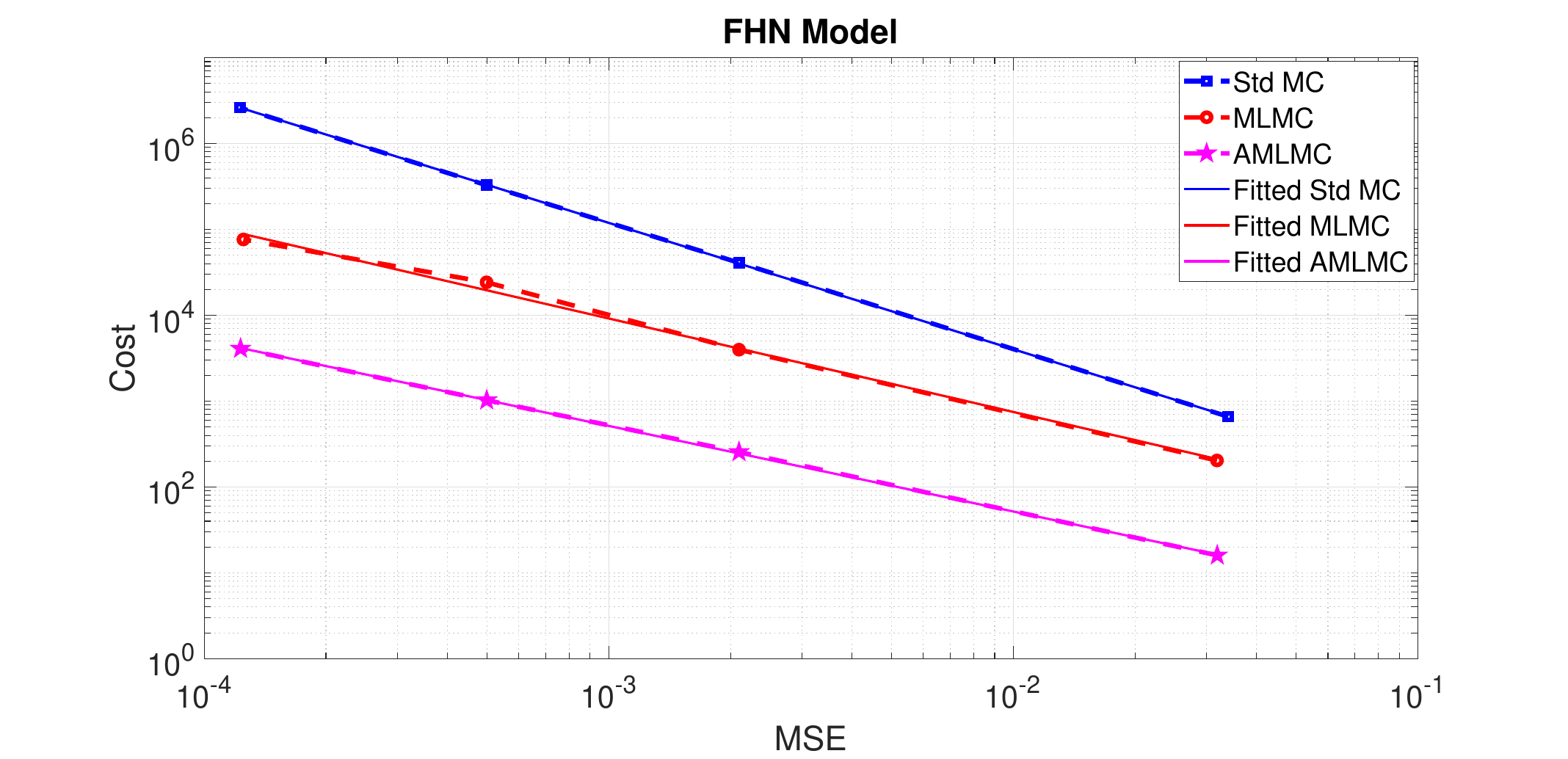}}
\subfigure{\includegraphics[width=7.5cm, height=4.1cm]{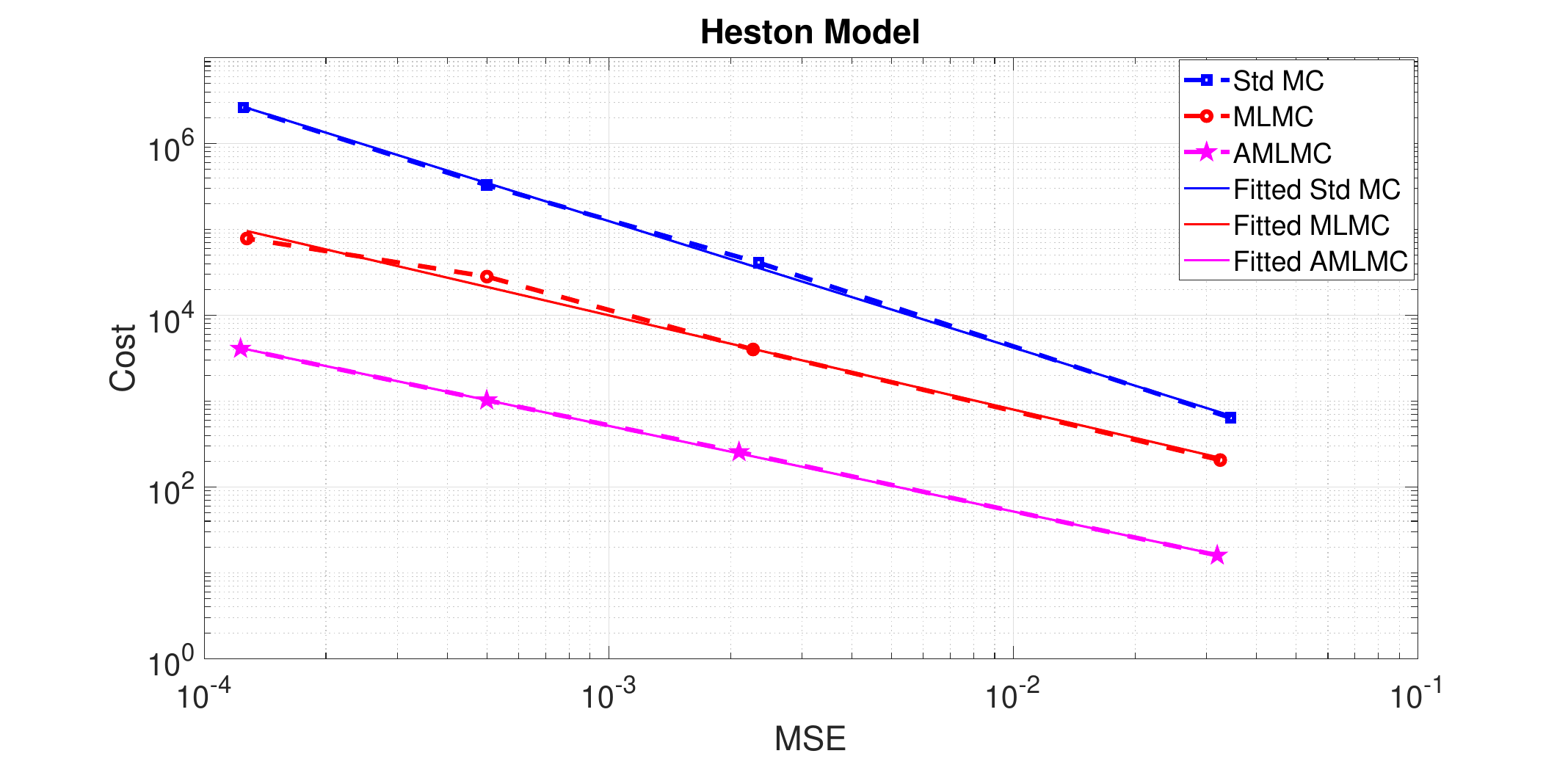}}
\\ 
\subfigure{\includegraphics[width=7.5cm, height=4.4cm]{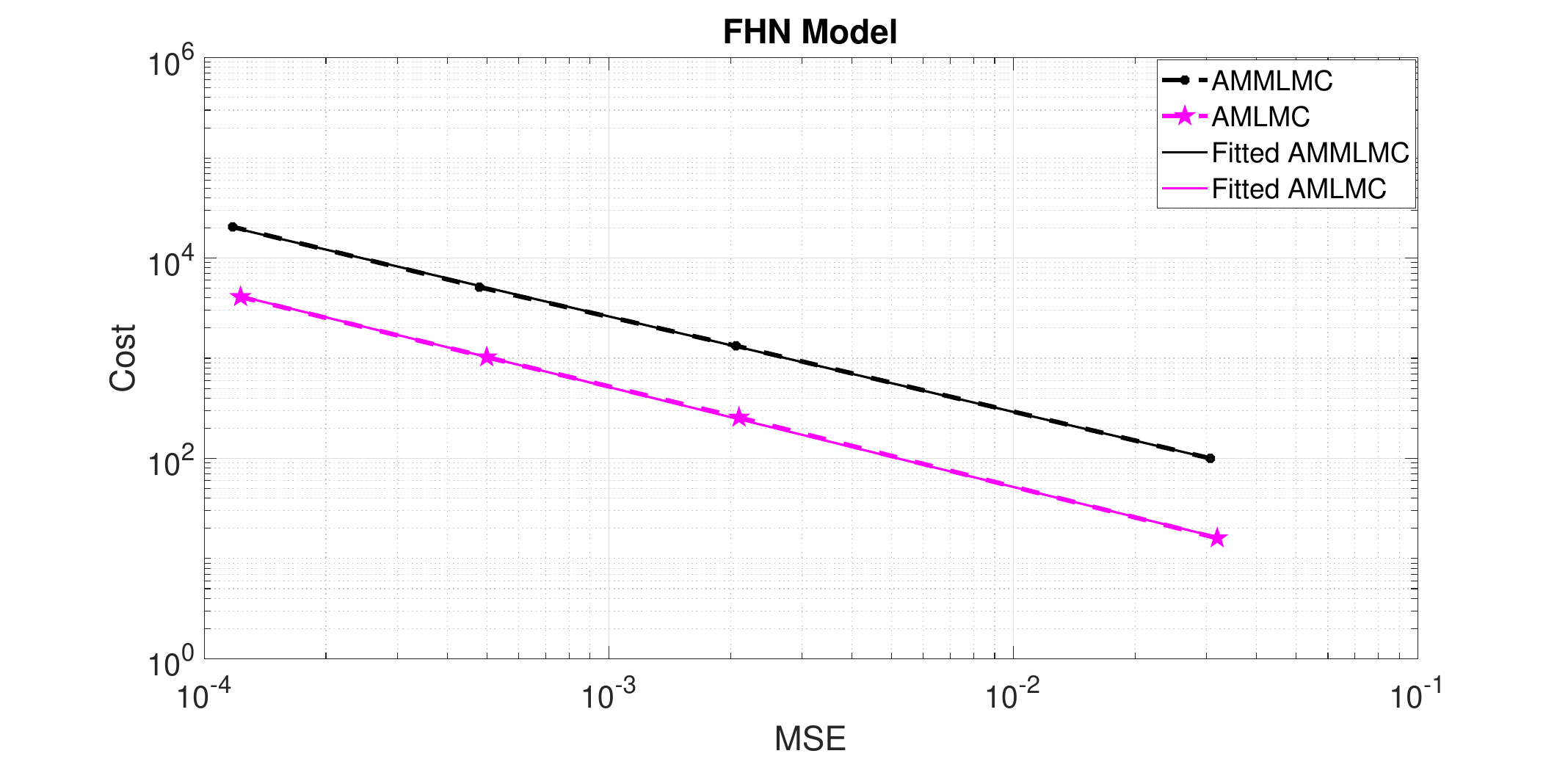}}
\subfigure{\includegraphics[width=7.5cm, height=4.4cm]{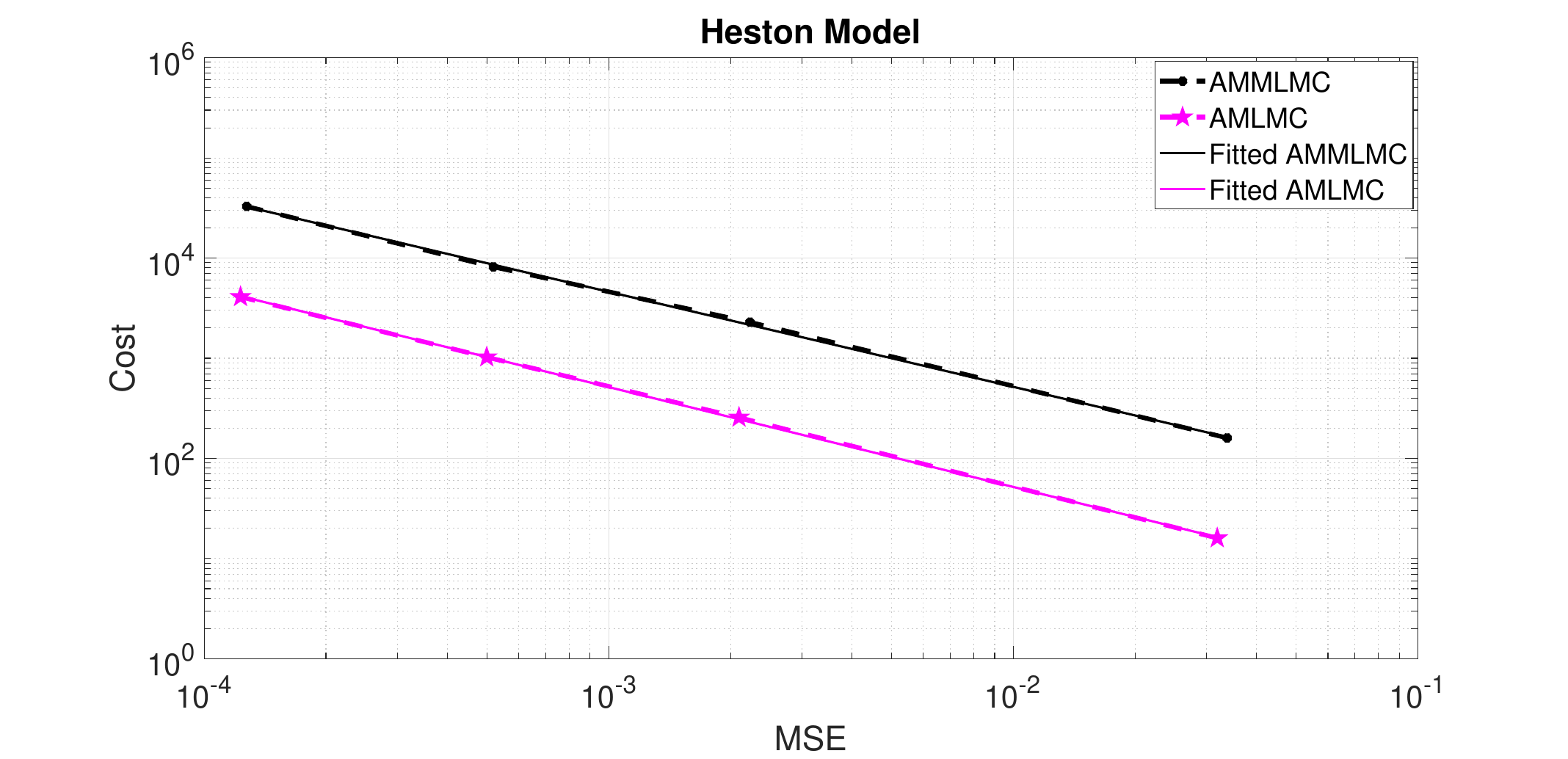}} 
\caption{\textcolor{black}{Cost versus MSE for the forward problem.}}
\label{fig:MSEvsCost1}
\end{figure}
\begin{figure}[h]
\centering
\subfigure{\includegraphics[width=7.5cm, height=4.1cm]{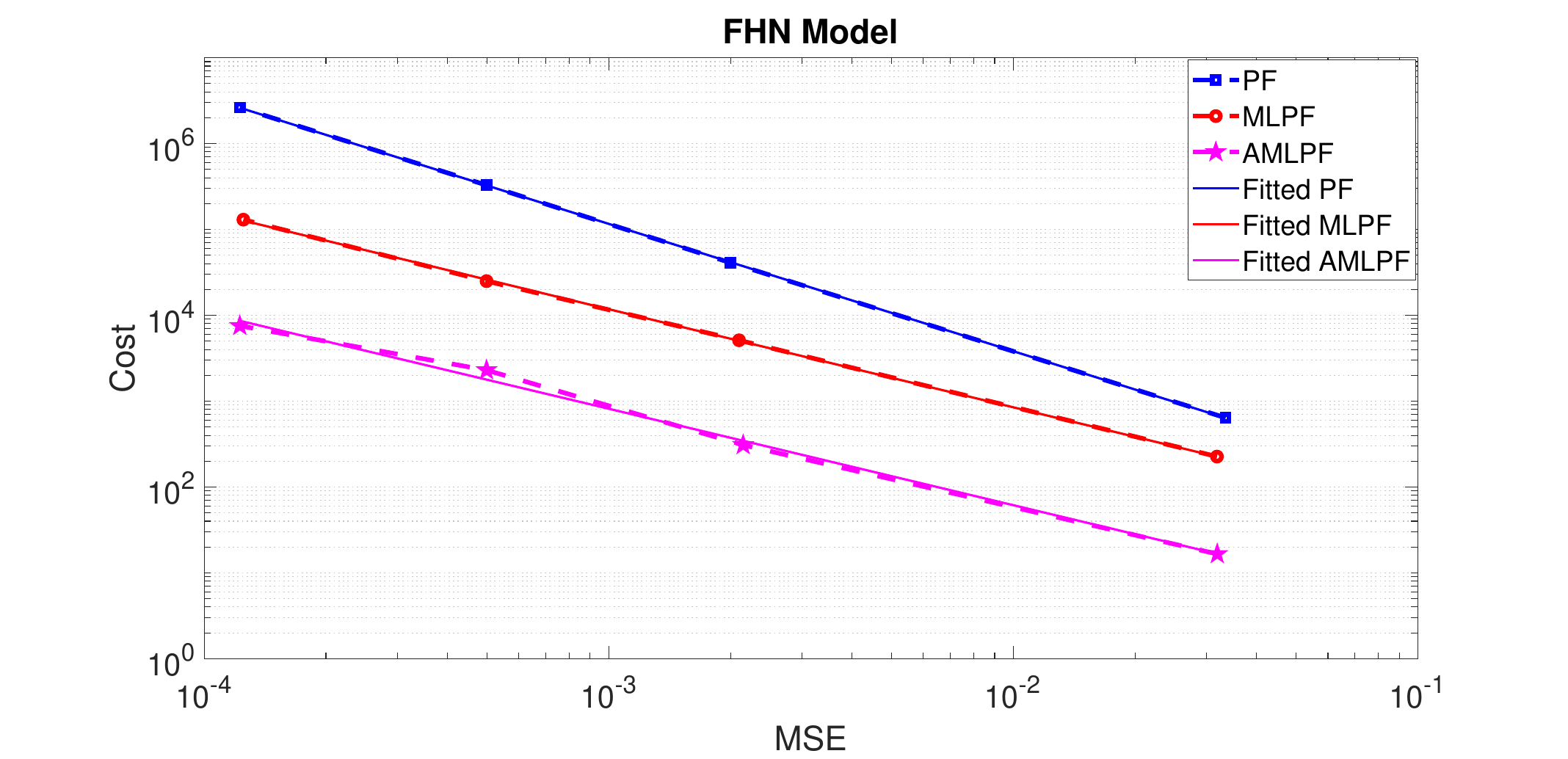}}
\subfigure{\includegraphics[width=7.5cm, height=4.1cm]{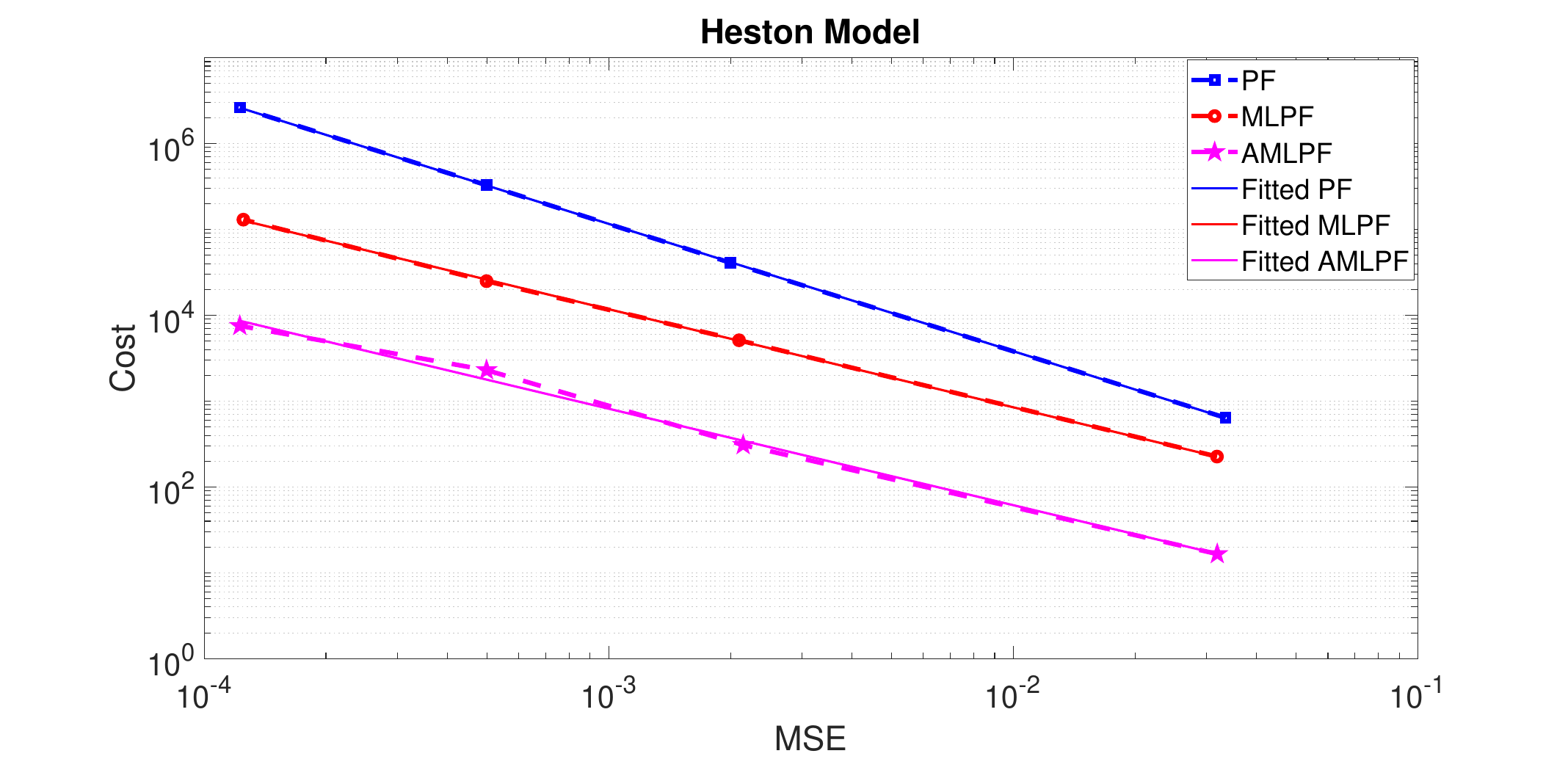}}
\\ 
\subfigure{\includegraphics[width=7.5cm, height=4.4cm]{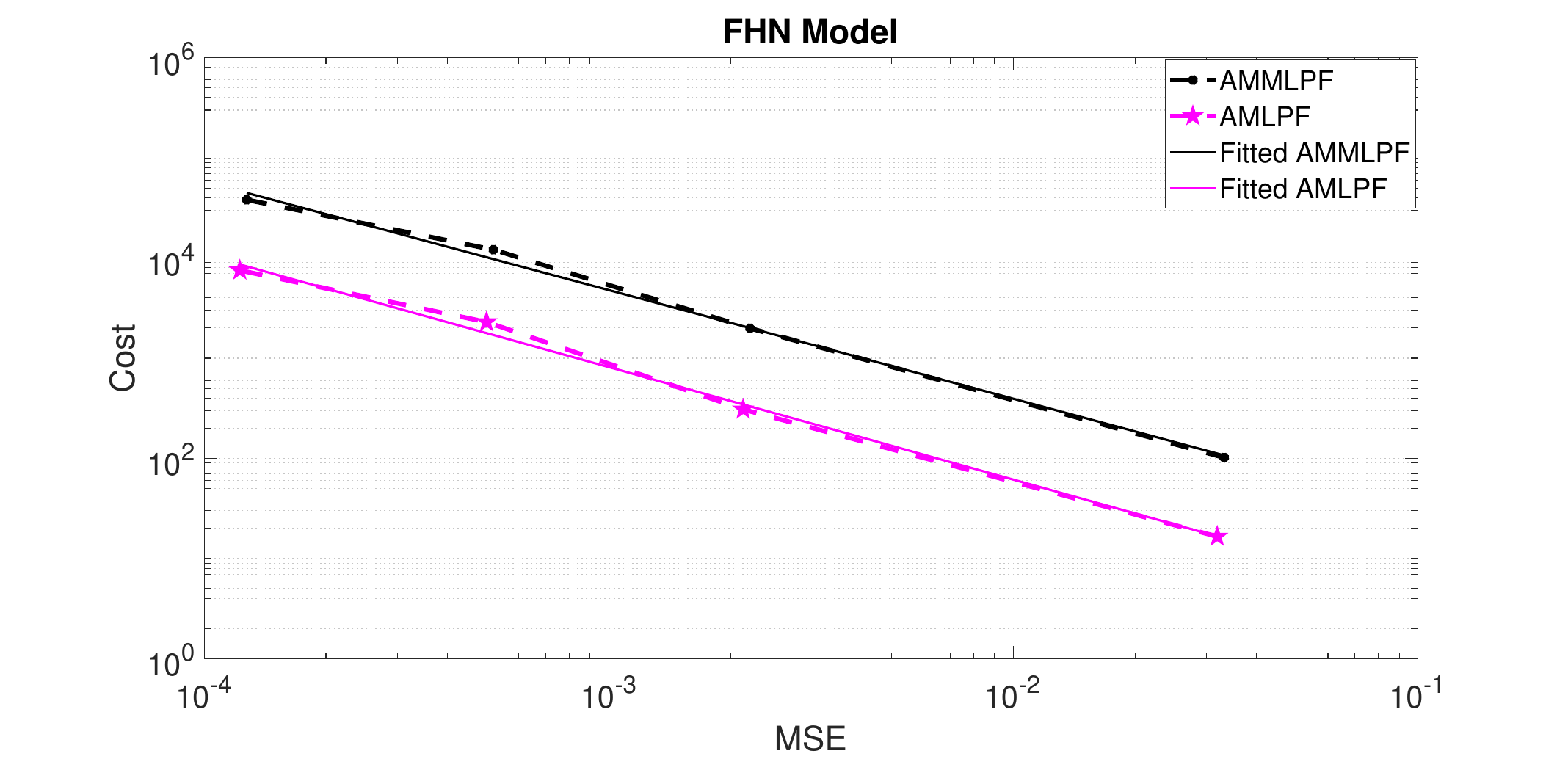}}
\subfigure{\includegraphics[width=7.5cm, height=4.4cm]{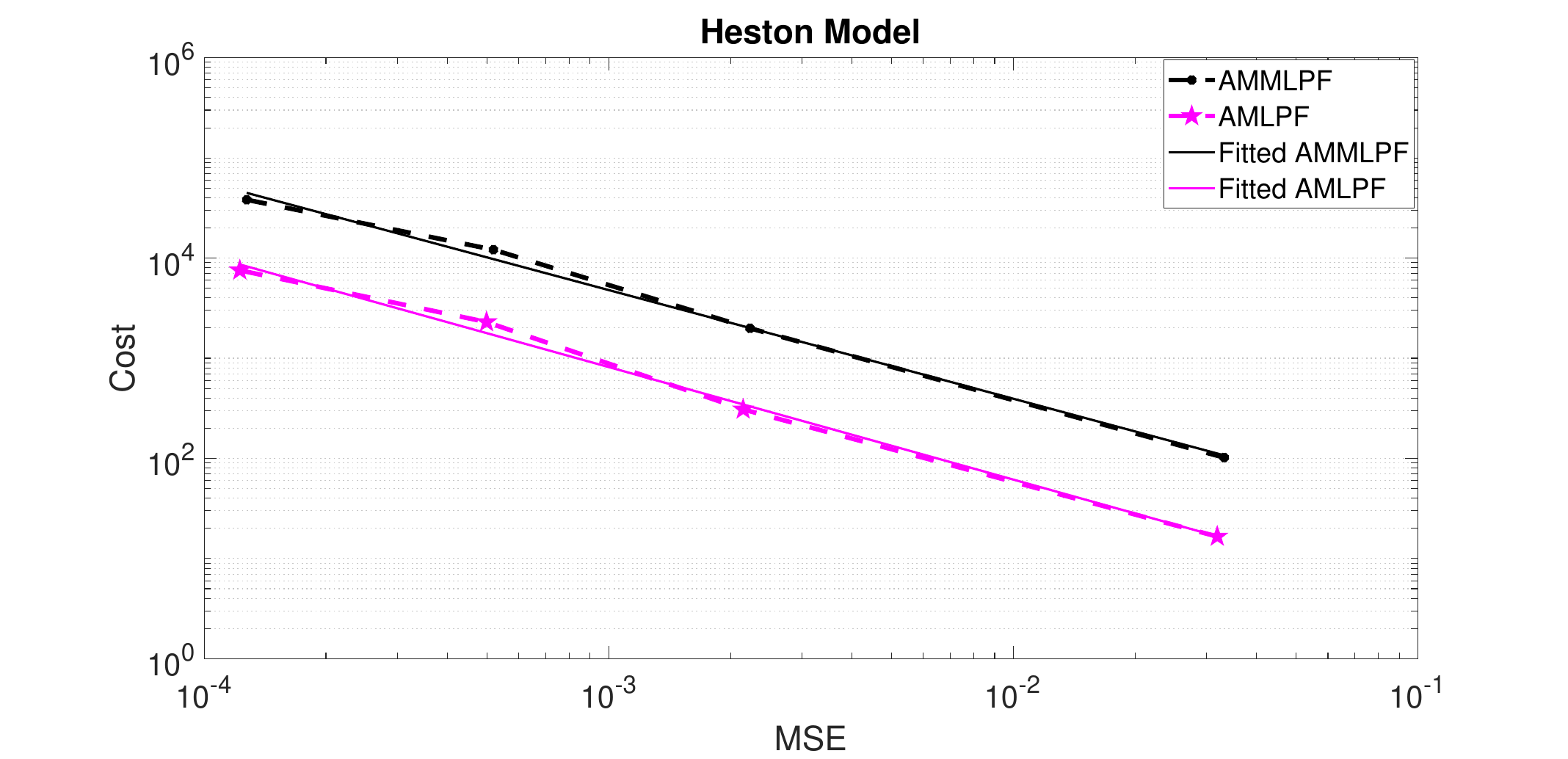}} 
\caption{\textcolor{black}{Cost versus MSE} for the filtering problem.}
\label{fig:MSEvsCost2}
\end{figure}

\begin{table} 
\caption{\textcolor{black}{Estimated change rate of $log$(cost) against $log$(MSE)}. Left: Forward problem. Right: Filtering problem.}  
\label{tab:mse} 
\begin{minipage}[t]{.49\textwidth}
\begin{center}
\begin{tabular}{ c c c c  } 
	\toprule 
	Model  & \hspace{-0.4cm} Std MC & \hspace{-0.4cm} MLMC  & \hspace{-0.4cm} AMLMC \\
	\midrule
	\textbf{FHN} & \hspace{-0.4cm}  -1.48 & \hspace{-0.4cm}  -1.1 & \hspace{-0.4cm}  -1.03 \\ 
	\textbf{Heston} & \hspace{-0.4cm}  -1.47 & \hspace{-0.4cm}  -1.11 & \hspace{-0.4cm}  -1.05\\
	\bottomrule 
\end{tabular}
\end{center}
% \subcaption{Forward problem}
\end{minipage}
%
%\hfill
%
\begin{minipage}[t]{.49\textwidth}
\begin{center}
\begin{tabular}{ c c c c  } 
	\toprule
	Model  & \hspace{-0.4cm}  PF & \hspace{-0.4cm}  MLPF  & \hspace{-0.4cm}  AMLPF \\
	\midrule
	\textbf{FHN} & \hspace{-0.2cm}  -1.46 & \hspace{-0.4cm}  -1.17 & \hspace{-0.4cm}   -1.11 \\ 
	
	\textbf{Heston} & \hspace{-0.2cm}  -1.49 & \hspace{-0.4cm}  -1.24 & \hspace{-0.4cm}  -1.14\\
	
	\bottomrule 
\end{tabular}
\end{center}
% \caption{Filtering problem}
\label{}
\end{minipage}
\end{table} 

% \begin{table}%[H]
% \caption{Estimated rates of MSE with respect to the cost for the filtering problem.} 
% \begin{center}
% \begin{tabular}{ c c c c  } 
% \toprule
% Model  & PF & MLPF  & AMLPF \\
% \midrule
% \textbf{FHN} & -1.46 &-1.17 & -1.11 \\ 

% \textbf{Heston} & -1.49 &-1.24 & -1.14\\

% \bottomrule 
% \end{tabular}

% \label{tab:tabfiltering}
% \end{center}
% \end{table}

\section{Conclusion}
\textcolor{black}{Our work has investigated the use of a 
% non-degenerate 
weak second order scheme within the multilevel Monte Carlo (MLMC) framework. 
%, under the objective of an efficient estimation of expectations~w.r.t the law of a wide class of diffusion processes, including \emph{hypo-elliptic} diffusions}. \textcolor{black}{This latter is an important class of SDEs with numerous uses in applications. 
We first proved that our scheme has a strong error~1}. 
Then, in the context of MLMC, we developed a new antithetic estimator based on our weak second order scheme which achieves the optimal cost rate $\mathcal{O} (\varepsilon^{-2})$, $\varepsilon > 0$, to obtain a MSE of $\mathcal{O} (\varepsilon^2)$. Such an optimal cost rate is also reported for the different antithetic MLMC  approach of \cite{ml_anti} which makes use of a truncated Milstein scheme of weak error 1.
The new antithetic estimator is shown to possess a benefit versus the one of \cite{ml_anti}, that is, our estimator is expected to be more efficient for a finite maximum level of discretization $L$ used in practice due to the higher order weak convergence. 
% We also note that in the Supplementary Material, we have analytically shown that for small-noise diffusions the variance of the new antithetic estimator will be smaller than that of \cite{ml_anti}}.
As an application, \textcolor{black}{we have proposed an antithetic multilevel particle filter (AMLPF) by building upon previous works \cite{mlpf, anti_mlpf} for the purposes of efficient filtering of diffusion processes from observations. Our simulation studies are in support of the anticipated cost of the proposed AMLPF being $\mathcal{O} (\varepsilon^{-2} \log (\varepsilon)^2)$ to achieve an MSE of $\mathcal{O} (\varepsilon^2)$}. \textcolor{black}{Also, all our numerics support the understanding that the new antithetic estimator using the weak second order scheme outperforms the antithetic Milstein scheme-based estimator in both forward/filtering problems.}  We emphasize that our numerical scheme is locally non-degenerate under both elliptic/hypo-elliptic settings, whereas the truncated Milstein scheme is degenerate in the hypo-elliptic case. The non-degeneracy of the scheme makes possible its deployment within particle filters with guided proposals so that stochastic weights required to be assigned to particles are well-defined and available as the ratio of products involving the density expression for the numerical scheme and the proposal, though the exploration of this direction is left for future work.  

%\subsection*{Acknowledgements}
%YI was supported by the Additional Funding Programme for Mathematical Sciences, delivered by EPSRC (EP/V521917/1) and the Heilbronn Institute for Mathematical Research. 
%AJ was supported by SDS CUHK, Shenzhen. 

\appendix 

\section{Proof of Proposition \ref{prop:s_rate}} \label{app:s_rate}

\begin{proof}
	Let $1 \le i \le 2^{\ell}$. We have that  
	$$
		\mathcal{S}_i \equiv 
		\mathbb{E}  
		\Bigl[ \max_{0 \le k \le i} 
		\| X_{t_k} - \bar{X}_{t_k} \|^p 
		\Bigr]
		\le N^{p-1} \sum_{1 \le j \le N}
		\mathbb{E} 
		\Bigl[ \max_{0 \le k \le i} 
		| X_{t_k}^j - \bar{X}^{j}_{t_k} |^p 
		\Bigr],
	$$
	where we made use of the following inequality:
	\begin{align} \label{eq:bd}
		\Bigl( \sum_{1 \le j \le N}|x_j| \Bigr)^p \le N^{p-1} 
		\sum_{1 \le j  \le N} |x_j|^{p}, 
		\quad x = (x_1, \ldots, x_N) \in \mathbb{R}^N.  
	\end{align}
	We will show that for any $p \ge 2$, there exists a constant $C > 0$ such that: 
	\begin{align} \label{eq:bd_grn}
		\mathcal{S}_i \le C 
		\Bigl(
		\Delta_\ell^{p/2} 
		+  
		\sum_{0 \le n \le i-1} \mathcal{S}_n \cdot \Delta_\ell 
		\Bigr),
	\end{align}
	which leads to the conclusion due to the discrete Gronwall's inequality.
%	 We have for $0 \le k \le i$, 
%	% 
%	\begin{align}
%		& X_{t_k}^j - \bar{X}^{j}_{t_k}  
%		= \sum_{0 \le n \le k-1} \sum_{0 \le m \le d}
%		\int_{t_n}^{t_{n+1}}
%		\bigl( 
%		\sigma_m^j (X_s) - \sigma_m^j (\bar{X}_{t_n})
%		\bigr) dB_s^m  \nonumber \\ 
%		& \quad
%		- \sum_{0 \le n \le k-1}  \sum_{0 \le m_1, m_2 \le d} \mathcal{L}_{m_1} \sigma_{m_2}^j (\bar{X}_{t_n})
%		\Delta \eta_{t_{n+1}, t_n}^{m_1 m_2} 
%		- \tfrac{1}{2} 
%		\sum_{0 \le n \le k-1}  
%		\sum_{1  \le m_1 < m_2 \le d} 
%		\bigl[\sigma_{m_1}, \sigma_{m_2} \bigr]^j (\bar{X}_{t_n}) \Delta \widetilde{A}_{t_{n+1}, t_n}^{m_1 m_2}. \nonumber
%	\end{align}
	% 
	% 
	Application of \textcolor{black}{stochastic} Taylor expansion for $X_{t_n}^{j}, \, 0 \le n \le k-1,$ yields that, for $1 \le j \le N$:  
	\begin{align*}
		& X_{t_k}^j - \bar{X}^{j}_{t_k}  
		= \sum_{ \substack{0 \le n \le k-1 \\ 0 \le m \le d}}   \int_{t_n}^{t_{n+1}}
		\bigl( \sigma_m^j (X_s) - \sigma_m^j (\bar{X}_{t_n})
		\bigr) dB_s^m  \nonumber \\ 
		& \quad
		- \sum_{\substack{0 \le n \le k-1 \\ 0 \le m_1, m_2 \le d }}   \mathcal{L}_{m_1} \sigma_{m_2}^j (\bar{X}_{t_n})
		\Delta \eta_{t_{n+1}, t_n}^{m_1 m_2}  - \tfrac{1}{2} 
		\sum_{\substack{0 \le n \le k-1 \\ 1  \le m_1 < m_2 \le d}}  
		\bigl[\sigma_{m_1}, \sigma_{m_2} \bigr]^j (\bar{X}_{t_n}) \Delta \widetilde{A}_{t_{n+1}, t_n}^{m_1 m_2} \\ 
		& = 
		\sum_{ \substack{0 \le n \le k-1 \\ 0 \le m \le d}}  
		\bigl( 
		\sigma_m^j (X_{t_n}) - \sigma_m^j (\bar{X}_{t_n}) 
		\bigr) \Delta B^m_{t_{n+1}, t_n} 
		\nonumber \\
		& \quad 
		+ \tfrac{1}{2}
		\sum_{\substack{0 \le n \le k-1 \\ 1 \le m_1, m_2 \le d}} 
		\bigl(
		\mathcal{L}_{m_1} \sigma_{m_2}^j (X_{t_n}) 
		- \mathcal{L}_{m_1} \sigma_{m_2}^j (\bar{X}_{t_n}) 
		\bigr) 
		\bigl\{ 
		\Delta B_{t_{n+1}, t_n}^{m_1} \Delta B_{t_{n+1}, t_n}^{m_2} 
		- \Delta_\ell \mathbf{1}_{m_1 = m_2} 
		\bigr\} 
		\nonumber \\ 
		& \quad 
		- \tfrac{1}{2} \sum_{\substack{0 \le n \le k-1 \\  1 \le m_1 < m_2 \le d }} 
		\bigl\{ 
		[\sigma_{m_1}, \sigma_{m_2}]^j (X_{t_n}) \Delta {A}_{t_{n+1}, t_n}^{m_1 m_2}
		+  
		[\sigma_{m_1}, \sigma_{m_2}]^j (\bar{X}_{t_n}) 
		\Delta \widetilde{A}_{t_{n+1}, t_n}^{m_1 m_2} 
		\bigr\} 
		\nonumber \\
		& \quad 
		+ \sum_{0 \le n \le k-1} 
		\bigl(
		\mathcal{M}_{t_{n+1}, t_n}^j + \mathcal{N}_{t_{n+1}, t_n}^j  
		\bigr), 
	\end{align*}
	where the terms $\mathcal{M}_{t_{n+1}, t_n}^j$,  
	$\mathcal{N}_{t_{n+1}, t_n}^j$ are such that 
	$\mathbb{E} \bigl[ \mathcal{M}_{t_{n+1}, t_n}^j | \mathcal{F}_{t_n} \bigr] = 0$ for $0 \le n \le k-1$ and it holds under Assumption \ref{ass:coeff} that for any $p \ge 2$, there exist constants $C_1, C_2 > 0$ such that
	\begin{align}
		\max_{0 \le n \le k-1} \mathbb{E} 
		\bigl[
		| \mathcal{M}_{t_{n+1}, t_n}^j |^p
		\bigr] \le C_1 \Delta_\ell^{3p/2}, 
		\qquad 
		\max_{0 \le n \le k-1} \mathbb{E} 
		\bigl[
		| \mathcal{N}_{t_{n+1}, t_n}^j |^p
		\bigr] \le C_2 \Delta_\ell^{2p}. 
	\end{align}
	Thus, inequality (\ref{eq:bd}) yields
	$
	\mathbb{E} \bigl[  \max_{0 \le k \le i}  \bigl| X_{t_k}^j - \bar{X}_{t_k}^j \bigr|^p  \bigr]  \le C_p \sum_{1 \le \alpha \le 6}  \mathcal{T}_i^{(\alpha), j} 
	$ 
	for some constant $C_p > 0$, where we have set: 
	\allowdisplaybreaks
	\begin{align}
		&\hspace{-0.4cm} \mathcal{T}_i^{(1), j} 
		= \mathbb{E} 
		\bigl[ 
		\max_{0 \le k \le i} 
		\bigl| 
		\sum_{0 \le n \le k-1} 
		\bigl(
		\sigma_0^j (X_{t_n}) - \sigma_0^j (\bar{X}_{t_n}) 
		\bigr) \Delta_\ell  
		\bigr|^p
		\bigr]; 
		\nonumber \\ 
		& \hspace{-0.4cm} \mathcal{T}_i^{(2), j} 
		= \mathbb{E}
		\bigl[ 
		\max_{0 \le k \le i}  
		\bigl|
		\sum_{\substack{0 \le n \le k-1 \\ 1 \le m \le d}}  
		\bigl( 
		\sigma_m^j (X_{t_n}) 
		- \sigma_m^j (\bar{X}_{t_n})  
		\bigr) \Delta B^m_{t_{n+1}, t_n} 
		\bigr|^p
		\bigr]; 
		\nonumber \\ 
		& \hspace{-0.4cm}
		 \mathcal{T}_i^{(3), j}  
		 \! 
		= 
		\mathbb{E}
		\bigl[
		\max_{0 \le k \le i}  
		\bigl|
		\! \! \!  \! 
		\sum_{\substack{0 \le n \le k-1 \\ 1 \le m_1, m_2 \le d}} 
		\! \! \! \! \! \! \! 
         \bigl(
		\mathcal{L}_{m_1} \sigma_{m_2}^j (X_{t_n}) 
		- 
		\mathcal{L}_{m_1} \sigma_{m_2}^j (\bar{X}_{t_n}) 
		\bigr) 
		\bigl( 
		\Delta B_{t_{n+1}, t_n}^{m_1} \Delta B_{t_{n+1}, t_n}^{m_2} 
		\! 
		- \Delta_\ell \mathbf{1}_{m_1 = m_2} 
		\bigr)
		\bigr|^p
		\bigr];  
		\nonumber \\   
		& \hspace{-0.4cm} 
		\mathcal{T}_i^{(4), j}
		=  \mathbb{E} 
		\bigl[ \max_{0 \le k \le i}  
		\bigl|\sum_{\substack{0 \le n \le k-1 \\ 1 \le m_1 < m_2 \le d}} 
		\bigl\{ 
		[\sigma_{m_1}, \sigma_{m_2}]^j (X_{t_n}) \Delta {A}_{t_{n+1}, t_n}^{m_1 m_2}  +  
		[\sigma_{m_1}, \sigma_{m_2}]^j (\bar{X}_{t_n}) 
		\Delta \widetilde{A}_{t_{n+1}, t_n}^{m_1 m_2} 
		\bigr\}  
		\bigr|^p \bigr]; 
		\nonumber \\[0.1cm]    
		&\hspace{-0.4cm} 
		 \mathcal{T}_i^{(5), j}  = 
		\mathbb{E} 
		\bigl[ \max_{0 \le k \le i} 
		\bigl|
		\sum_{0 \le n \le k-1} \mathcal{M}_{t_{n+1}, t_n}^j
		\bigr|^p 
		\bigr], \quad 
		\mathcal{T}_i^{(6), j} = 
		\mathbb{E} 
		\bigl[ \max_{0 \le k \le i} 
		\bigl|
		\sum_{0 \le n \le k-1} \mathcal{N}_{t_{n+1}, t_n}^j
		\bigr|^p 
		\bigr]. \nonumber   
	\end{align}
	Applying inequality (\ref{eq:bd}), we have under Assumption \ref{ass:coeff} that:  
	\begin{align*}
		\mathcal{T}^{(1), j}_i 
		& \le i^{p-1} 
		\sum_{0 \le n \le i-1} \mathbb{E} 
		\bigl[ 
		| \sigma_0^j (X_{t_n}) - \sigma_0^j (\bar{X}_{t_n}) 
		|^p
		\bigr] \Delta_\ell^p 
%		\le c_1 (i \Delta_\ell)^{p-1} \sum_{0 \le n \le i-1} \mathcal{S}_n \cdot \Delta_\ell  
		\le c_1 T^{p-1} \sum_{0 \le n \le i-1} \mathcal{S}_n \cdot \Delta_\ell 
	\end{align*}
	for some constant $c_1 > 0$ independent of $\Delta_\ell$ since $i \Delta_\ell \le T$. Similarly, we have:
	\begin{align}
		\mathcal{T}^{(6),j}_i \le i^{p-1} \sum_{0 \le n \le i-1} \mathbb{E} 
		\bigl[ 
		| \mathcal{N}_{t_{n+1}, t_n}^j |^p
		\bigr]  
		\le c_6 T^p \Delta_\ell^p 
	\end{align}
	for some constant $c_6 > 0$. We consider the other four terms. Since they involve martingales, we make use of the discrete Burkholder-Davis-Gundy inequality to obtain: 
	\begin{align*}
		\mathcal{T}_{i}^{(2),j} 
		& \le c_{2,1} \mathbb{E} 
		\bigl[ 
		\bigl( 
		\sum_{0 \le n \le i-1} \sum_{1 \le m \le d}
		\bigl\{ 
		(\sigma_{m}^j (X_{t_n}) - \sigma_{m}^j (\bar{X}_{t_n})) \Delta B_{t_{n+1}, t_n}^m
		\bigr\}^2
		\bigr)^{p/2}
		\bigr] 
		\nonumber \\ 
%		& \le i^{p/2 - 1} \sum_{0 \le n \le i-1}
%		\mathbb{E}
%		\Bigl[
%		\Bigl|
%		\sum_{1 \le m \le d} \bigl\{ 
%		(\sigma_{m}^j (X_{t_n}) - \sigma_{m}^j (\bar{X}_{t_n})) \Delta B_{t_{n+1}, t_n}^m
%		\bigr\}^2
%		\Bigr|^{p/2}
%		\Bigr]
%		\nonumber \\ 
		& \le c_{2, 2} \, i^{p/2 - 1} \sum_{0 \le n \le i-1}
		\sum_{1 \le m \le d}
		\mathbb{E}
		\bigl[
		\bigl|
		(\sigma_{m}^j (X_{t_n}) - \sigma_{m}^j (\bar{X}_{t_n})) \Delta B_{t_{n+1}, t_n}^m
		\bigr|^{p}
		\bigr]
		\nonumber \\ 
		& \le c_{2, 3} \, i^{p/2 - 1} \sum_{0 \le n \le i-1} \mathcal{S}_n \cdot \Delta_\ell^{p/2}
		\le c_{2, 3} \, T^{p/2 - 1} \sum_{0 \le n \le i-1} \mathcal{S}_n \cdot \Delta_\ell
	\end{align*}
	for some constants $c_{2,1}, c_{2,2}, c_{2,3} > 0$, where we applied (\ref{eq:bd}) in the second  inequality. Similarly, we have that:
	\begin{align*}
		\mathcal{T}_i^{(3), j}  
		& \le c_3 i^{p/2 - 1} 
		\sum_{0 \le n \le i-1} \mathcal{S}_n \cdot \Delta_\ell^{p} 
		\le c_3 T^{p/2 - 1} \Delta_\ell^{p/2}
		\sum_{0 \le n \le i-1} \mathcal{S}_n \cdot \Delta_\ell;  
		\\ 
		\mathcal{T}_i^{(5), j} 
		& \le c_{5, 1} \, i^{p/2 -1} \sum_{0 \le n \le i-1}
		\mathbb{E} 
		\bigl[
		\bigl| 
		\mathcal{M}_{t_{n+1}, t_n}^j
		\bigr|^p
		\bigr]
		\le c_{5,2} \, i^{p/2} \Delta_\ell^{3p/2}
		= c_{5,2} \, T^{p/2} \Delta_\ell^{p}
	\end{align*} 
	for some constants $c_3, c_{5,1}, c_{5,2} > 0$. Finally, for $\mathcal{T}_i^{(4), j}$, we obtain:
	\begin{align}
		\mathcal{T}_i^{(4), j} 
		\le c_{4,1} i^{p/2 -1} 
		\sum_{0 \le n \le i-1} 
		\sum_{1 \le m_1 < m_2 \le d} \mathbb{E} 
		\bigl[ 
		| \Delta A_{t_{n+1}, t_n}^{m_1 m_2} |^p 
		+  
		| \Delta \widetilde{A}_{t_{n+1}, t_n}^{m_1 m_2} |^p 
		\bigr]
		\le c_{4, 2} T^{p/2} \Delta_\ell^{p/2}
	\end{align}
	for constants $c_{4, 1}, c_{4 ,2} > 0$, where we used 
	that $\mathbb{E} |\Delta A_{t_{n+1}, t_n}^{m_1 m_2}|^p  
	= \mathcal{O} (\Delta_\ell^p)$, $\mathbb{E} |\Delta \widetilde{A}_{t_{n+1}, t_n}^{m_1 m_2}|^p  
	= \mathcal{O} (\Delta_\ell^p)$ for any $p \ge 2$. 
	Note that $\sum_{0 \le n \le i-1} \mathcal{S}_n$ does not appear in the upper bound of $\mathcal{T}_i^{(4),j}$. Thus, we obtain inequality (\ref{eq:bd_grn}) and conclude. 
\end{proof}
% 
%\begin{remark} 
%	The error term $\mathcal{T}_i^{(4),j}$ appeared in the above proof is induced from the use of tractable random variables $\Delta \widetilde{A}_{t_{n+1}, t_n}$ instead of intractable L\'evy area $\Delta A_{t_{n+1}, t_n}$ in the weak second order scheme (\ref{eq:scheme}). Due to the existence of the error term, the weak second order scheme attains the same rate of strong convergence as the E-M scheme or the truncated Milstein scheme (\ref{eq:t_mil}). 
%\end{remark} 

\section{Auxiliary results for Theorem \ref{thm:strong_err_coupl}} 
\label{app:tech}
Throughout this section, let $1 \le j \le N$, $1 \le \ell \le L$, $0 \le k \le 2^{\ell - 1} - 1$ and $t_k = k \Delta_{\ell-1}$. 
\begin{lemma} \label{lemma:diff_fine} 
	 It holds that:
	\allowdisplaybreaks
	\begin{align*}
		& \bar{X}^{f, [\ell], j}_{t_{k+1}} 
		 = \bar{X}^{f, [\ell], j}_{t_k} 
		+ \sum_{0 \le m \le d} 
		\sigma_{m}^j \bigl(\bar{X}^{f, [\ell]}_{t_k} \bigr) 
		\Delta B_{t_{k+1}, t_k}^{m} 
		+ \sum_{0 \le m_1, m_2 \le d} 
		\mathcal{L}_{m_1} \sigma_{m_2}^j \bigl(\bar{X}^{f, [\ell]}_{t_k} \bigr) 
		\Delta \eta_{t_{k+1}, t_k}^{m_1 m_2} \nonumber \\ 
		& \quad 
		- \tfrac{1}{2} \sum_{1 \le m_1, m_2 \le d} 
		\mathcal{L}_{m_1} \sigma_{m_2}^j 
		\bigl( \bar{X}^{f, [\ell]}_{t_k} \bigr)
		\bigl( 
		\Delta B_{t_{k+1}, t_{k+1/2}}^{m_1}  
		\Delta B_{t_{k+1/2}, t_k}^{m_2}
		-
		\Delta B_{t_{k+1/2}, t_k}^{m_1} 
		\Delta B_{t_{k+1}, t_{k+1/2}}^{m_2} 
		\bigr) \nonumber \\ 
		& \quad 
		+ \tfrac{1}{2} \sum_{1 \le  m_1 <  m_2 \le d} 
		\bigl[\sigma_{m_1}, \sigma_{m_2} \bigr]^j
		\bigl( \bar{X}^{f, [\ell]}_{t_k} \bigr) 
		\bigl( 
		\Delta \widetilde{A}^{m_1 m_2}_{t_{k+1/2}, t_k}
%		\Delta B^{m_1}_{t_{k+1/2}, t_k} 
%		\Delta \widetilde{B}^{m_2}_{t_{k+1/2}, t_k} 
		+ 
%		\Delta B^{m_1}_{t_{k+1}, t_{k+1/2}}
%		\Delta \widetilde{B}^{m_2}_{t_{k+1}, t_{k+1/2}}  
        \Delta \widetilde{A}^{m_1 m_2}_{t_{k+1}, t_{k+1/2}} 
		\bigr)
		% & \quad - \tfrac{1}{2} \sum_{1 \le  k_2 <  k_1 \le d} 
		% \mathcal{L}_{k_1} \sigma_{k_2, \theta}^j (\bar{X}^{f}_{i})
		% \bigl( \delta B^{k_1}_{(i+1/2, i)} \delta \widetilde{B}^{k_2}_{(i+1/2, i)} 
		% + \delta B^{k_1}_{(i+1, i+1/2)} \delta \widetilde{B}^{k_2}_{(i+1, i+1/2)}  \bigr) \nonumber \\
        + \bar{\mathcal{M}}^{f, j}_{t_{k+1}, t_k} 
		+ \bar{\mathcal{N}}^{f, j}_{t_{k+1}, t_k},  
	\end{align*}
	where the remainder terms are such that  
	$ \mathbb{E}  \bigl[  \bar{\mathcal{M}}^{f, j}_{t_{k+1}, t_k}  | \mathcal{F}_{t_{k}}  \bigr] = 0
	$,  
	and for any $p \ge 2$ there exist constants $C_1, C_2 > 0$ so that: 
    $$ 
		\max_{0 \le k \le 2^{\ell-1} - 1} 
		\mathbb{E} \bigl[ | \bar{\mathcal{M}}^{f, j}_{t_{k+1}, t_k}|^p \bigr] 
		\le C_1 \Delta_{\ell-1}^{3p/2}, \quad 
		\max_{0 \le k \le 2^{\ell-1} - 1} 
		\mathbb{E} \bigl[ | \bar{\mathcal{N}}^{f, j}_{t_{k+1}, t_k} |^p \bigr] 
		\le C_2 \Delta_{\ell -1}^{2p}. 
	$$ 
	Similarly, it holds that:
	\allowdisplaybreaks
	\begin{align*}  
		& \widetilde{X}^{f, [\ell], j}_{t_{k+1}} 
		= \widetilde{X}^{f, [\ell], j}_{t_k} 
		+ \sum_{0 \le m \le d} 
		\sigma_{m}^j 
		\bigl( \widetilde{X}^{f, [\ell]}_{t_k} \bigr) 
		\Delta B_{t_{k+1}, t_k}^{m} 
		+  \sum_{0 \le m_1, m_2 \le d} 
		\mathcal{L}_{m_1} \sigma_{m_2}^j 
		\bigl(
		\widetilde{X}^{f, [\ell]}_{t_k}
		\bigr) 
		\Delta \eta_{t_{k+1}, t_k}^{m_1 m_2} 
		\nonumber  \\ 
		& \quad 
		+ \tfrac{1}{2} \sum_{1 \le m_1, m_2 \le d}
		\mathcal{L}_{m_1} \sigma_{m_2}^j 
		\bigl( \widetilde{X}^{f, [\ell]}_{t_k} \bigr) 
		\bigl( 
		\Delta B_{t_{k+1}, t_{k+1/2}}^{m_1} 
		\Delta B_{t_{k+1/2}, t_k}^{m_2} 
		- \Delta B_{t_{k+1/2}, t_k}^{m_1} 
		\Delta B_{t_{k+1}, t_{k+1/2}}^{m_2} 
		\bigr) \nonumber \\ 
		& \quad 
		- \tfrac{1}{2} \sum_{1 \le  m_1 <  m_2 \le d} 
		\bigl[ \sigma_{m_1}, \sigma_{m_2} \bigr]^j
		\bigl( \widetilde{X}^{f, [\ell]}_{t_k} \bigr)  
		\bigl( 
%		\Delta B^{m_1}_{t_{k+1/2}, t_k} 
%		\Delta \widetilde{B}^{m_2}_{t_{k+1/2}, t_k} 
		\widetilde{A}_{t_{k+1/2}, t_k}^{m_1 m_2}
		+ 
		\widetilde{A}_{t_{k+1}, t_{k+1/2}}^{m_1 m_2}
%		\Delta B^{m_1}_{t_{k+1}, t_{k+1/2}} 
%		\Delta \widetilde{B}^{m_2}_{t_{k+1}, t_{k+1/2}}
		\bigr) 
		% & \quad + \tfrac{1}{2} \sum_{1 \le  k_2 <  k_1 \le d} 
		% \mathcal{L}_{k_1} \sigma_{k_2, \theta}^j (\bar{X}^{a}_{i})
		% \bigl( \delta B^{k_1}_{(i+1/2, i)} \delta \widetilde{B}^{k_2}_{(i+1/2, i)} 
		% + \delta B^{k_1}_{(i+1, i+1/2)} \delta \widetilde{B}^{k_2}_{(i+1, i+1/2)}  \bigr) \nonumber \\
        + \widetilde{\mathcal{M}}_{t_{k+1}, t_k}^{\, f, j} +  \widetilde{\mathcal{N}}_{t_{k+1}, t_k}^{\, f, j},
	\end{align*}
	where the remainder terms 
	$\widetilde{\mathcal{M}}_{t_{k+1}, t_k}^{\, f, j}$ and $\widetilde{\mathcal{N}}_{t_{k+1}, t_k}^{\, f, j}$ 
	satisfy the same properties as 
	$\bar{\mathcal{M}}_{t_{k+1}, t_k}^{f, j}$ and 
	$\bar{\mathcal{N}}_{t_{k+1}, t_k}^{f, j}$, respectively. 
\end{lemma}  

\begin{proof}
	From the definition of the fine discretization scheme (\ref{eq:X_f}), we have: 
	\begin{align} \label{eq:X_f_diff}
		& \bar{X}_{t_{k+1}}^{{f}, [\ell], j}
		= \bar{X}_{t_k}^{{f}, [\ell], j}
		% + \sigma_{0, \theta}^j (\bar{X}_{i}^{{f}}) \, \tfrac{h}{2}
		% + \sigma_{0, \theta}^j (\bar{X}_{i+1/2}^{{f}}) \, \tfrac{h}{2}  \nonumber \\
		% & \quad 
		+ \sum_{0 \le m \le d} 
		\Bigl\{ 
		\sigma_{m}^j (\bar{X}_{t_k}^{f, [\ell]}) 
		\Delta B_{t_{k+1/2},t_k}^{m}
		+ 
		\sigma_{m}^j (\bar{X}_{t_{k+1/2}}^{f, [\ell]}) 
		\Delta B_{t_{k+1}, t_{k+1/2}}^{m}
		\Bigr\}  \\
		& \qquad 
		+ \sum_{0 \le m_1, m_2 \le d}
		\Bigl\{ 
		\mathcal{L}_{m_1} \sigma_{m_2}^j 
		\bigl(\bar{X}_{t_k}^{f, [\ell]} \bigr) 
		\Delta \eta_{t_{k+1/2}, t_k}^{m_1 m_2}
		+ 
		\mathcal{L}_{m_1} \sigma_{m_2}^j 
		\bigl( \bar{X}_{t_{k+1/2}}^{f, [\ell]} \bigr) 
		\Delta \eta_{t_{k+1}, t_{k+1/2}}^{m_1 m_2}
		\Bigr\}   \nonumber \\ 
		& \qquad 
		+ \tfrac{1}{2} 
		\sum_{1 \le m_1 < m_2 \le d} 
		\Bigl\{ 
		[ \sigma_{m_1}, \sigma_{m_2}]^j 
		\bigl(\bar{X}_{t_k}^{f, [\ell]} \bigr) 
		\widetilde{A}_{t_{k+1/2}, t_k}^{m_1 m_2} 
%		\Delta B^{m_1}_{t_{k+1/2}, t_k}
%		\Delta \widetilde{B}^{m_2}_{t_{k+1/2}, t_k}  
		+ 
		[ \sigma_{m_1}, \sigma_{m_2}]^j 
		\bigl( \bar{X}_{t_{k+1/2}}^{f, [\ell]} \bigr) 
		\widetilde{A}_{t_{k+1}, t_{k+1/2}}^{m_1 m_2}
%		\Delta B^{m_1}_{t_{k+1}, t_{k+1/2}}
%		\Delta \widetilde{B}^{m_2}_{t_{k+1}, t_{k+1/2}}
		\Bigr\}. \nonumber 
	\end{align} 
	The It\^o-Taylor expansion gives, for $0 \le m \le d$:  
	\begin{align} 
		\sigma_{m}^j (\bar{X}_{t_{k+1/2}}^{f, [\ell]}) 
		% = \sigma_{k}^j (\bar{X}_{i}^{f, [\ell]})  
		% + \sum_{m = 1}^N \int_{t_i}^{t_{i}+h_{\ell}} 
		% \partial_m \sigma_{k, \theta} (\bar{X}_{s}^{f}) d \bar{X}_{s}^{f, m} \nonumber \\ 
		% & \qquad + 
		% \tfrac{1}{2} 
		% \sum_{m_1, m_2 = 1}^N \int_{t_i}^{t_{i}+h/2} 
		% \partial_{m_1} \partial_{m_2} 
		% \sigma_{k, \theta}^j (\bar{X}_{s}^{f}) 
		% d \langle  \bar{X}_{\cdot}^{f, m_1}, \bar{X}_{\cdot}^{f, m_2}  \rangle_s \nonumber \\  
		= 
		\sigma_{m}^j (\bar{X}_{t_k}^{f, [\ell]})  
		+ \sum_{0 \le m_1 \le d} \mathcal{L}_{m_1} \sigma^j_{m} (\bar{X}_{t_k}^{f, [\ell]}) 
		\Delta B_{t_{k+1/2}, t_k}^{m_1} 
		+ \mathcal{E}_{t_{k+1/2}, t_k}^{f, j},  \label{eq:ito_taylor}
	\end{align}
	where under Assumption \ref{ass:coeff} the remainder term $\mathcal{E}_{t_{k+1/2}, t_k}^{f, j}$ is such that, for any $p \ge 2$, there exists a constant $C > 0$ so that 
	$
		\max_{0 \le k \le 2^{\ell-1} -1} \mathbb{E} 
		\bigl[ 
		| \mathcal{E}_{t_{k+1/2}, t_k}^{f, j} |^p 
		\bigr] 
		\le C \Delta_{\ell}^p. 
	$ 
	Furthermore, we note that the standard Taylor expansion gives that for any $f \in C_b^1 (\mathbb{R}^N)$: 
	\begin{align} \label{eq:taylor_1}
		f (\bar{X}_{t_{k+1/2}}^{f, [\ell]})  
		& =f (\bar{X}_{t_k}^{f, [\ell]}) 
		+ \sum_{1 \le i \le N} \partial_i  f  (\xi) 
		\bigl( \bar{X}_{t_{k+1/2}}^{f,[\ell], i} 
		- \bar{X}_{t_k}^{f, [\ell], i} \bigr) 		
	\end{align}
	% 
%	\begin{align}
%		\mathcal{L}_{m_1} \sigma_{m_2}^j (\bar{X}_{t_{k+1/2}}^{f, [\ell]})  
%		& = \mathcal{L}_{m_1} \sigma_{m_2}^j (\bar{X}_{t_k}^{f, [\ell]}) 
%		+ \sum_{1 \le i \le N} \partial_i \bigl( \mathcal{L}_{m_1} \sigma_{m_2}^j \bigr) (\xi_1) 
%		\bigl( \bar{X}_{t_{k+1/2}}^{f,[\ell], i} 
%		- \bar{X}_{t_k}^{f, [\ell], i} \bigr); 
%		\label{eq:taylor_1}  \\ 
%		[\sigma_{m_1}, \sigma_{m_2}]^j 
%		(\bar{X}_{t_{k+1/2}}^{f, [\ell]}) 
%		& = [ \sigma_{m_1}, \sigma_{m_2}]^j  
%		(\bar{X}_{t_k}^{f, [\ell]})  \nonumber \\ 
%		& \qquad + \sum_{1 \le i \le  N} 
%		\partial_i \bigl( [ \sigma_{m_1}, \sigma_{m_2}]^j \bigr) (\xi_2)  (\bar{X}_{t_{k+1/2}}^{f, [\ell], i} 
%		- \bar{X}_{t_k}^{f, [\ell], i}), 
%		\label{eq:taylor_2} 
%	\end{align}
	% 
for some variable $\xi  \in \mathbb{R}^N$, and it holds that: 
%Notice that under Assumption \ref{ass:coeff}
%	it holds that for any $p \ge 2$ and $1 \le j \le N$, there exists a constant $C > 0$ such that  
%	$$
%	\max_{0 \le k \le 2^{\ell -1} -1} \mathbb{E} 
%	\bigl[
%	| 
%	\bar{X}_{t_{k+1/2}}^{f, [\ell], j} 
%	- \bar{X}_{t_k}^{f, [\ell], j} 
%	|^p 
%	\bigr] 
%	\leq C \Delta_{\ell-1}^{p/2}. 
%	$$ 
	% 
	\begin{align} \label{eq:bm_incre} 
		\Delta B^{m_1}_{t_{k+1}, t_k} 
		\Delta B^{m_2}_{t_{k+1}, t_k} 
		& = \Delta B^{m_1}_{t_{k+1}, t_{k+1/2}} 
		\Delta B^{m_2}_{t_{k+1}, t_{k+1/2}}  
		+ \Delta B^{m_1}_{t_{k+1}, t_{k+1/2}} 
		\Delta B^{m_2}_{t_{k+1/2}, t_k}  \nonumber  
		\\ 
		& \quad + 
		\Delta B^{m_1}_{t_{k+1/2}, t_k} 
		\Delta B^{m_2}_{t_{k+1}, t_{k+1/2}} 
		+  
		\Delta B^{m_1}_{t_{k+1/2}, t_k} 
		\Delta B^{m_2}_{t_{k+1/2}, t_k}.  
	\end{align} 
Thus, applying (\ref{eq:ito_taylor}), (\ref{eq:taylor_1}) and (\ref{eq:bm_incre}) to (\ref{eq:X_f_diff}), we obtain that: 
	\begin{align*}
		& \bar{X}_{t_{k+1}}^{{f}, [\ell], j}
		= \bar{X}_{t_k}^{{f}, [\ell], j}
		+ \sum_{0 \le m \le d} 
		\sigma_{m}^j (\bar{X}_{t_k}^{f, [\ell]}) 
		\Delta B_{t_{k+1}, t_k}^{m} 
		+ \sum_{0 \le m_1, m_2 \le d} 
		\mathcal{L}_{m_1} \sigma_{m_2}^j 
		\bigl( \bar{X}_{t_k}^{f, [\ell]} \bigr) 
		\Delta \eta_{t_{k+1}, t_k}^{m_1 m_2}
		\nonumber \\ 
		& \quad 
		- \tfrac{1}{2} \sum_{1 \le m_1, m_2 \le  d} \mathcal{L}_{m_1} \sigma_{m_2}^j 
		\bigl(\bar{X}^{f, [\ell]}_{t_k} \bigr)
		\bigl( 
		\Delta B_{t_{k+1}, t_{k+1/2}}^{m_1} 
		\Delta B_{t_{k+1/2}, t_k}^{m_2} 
		- \Delta B_{t_{k+1/2, k}}^{m_1} 
		\Delta B_{t_{k+1}, t_{k+1/2}}^{m_2} 
		\bigr) \nonumber \\  
		& \quad + 
		\tfrac{1}{2} 
		\sum_{1 \le m_1 < m_2 \le d} 
		[ \sigma_{m_1}, \sigma_{m_2}]^j
		\bigl(\bar{X}_{t_k}^{f, [\ell]} \bigr)
		\bigl( 
		\widetilde{A}_{t_{k+1/2}, t_k}^{m_1 m_2}
%		\Delta B^{m_1}_{t_{k+1}, t_{k+1/2}}
%		\Delta \widetilde{B}^{m_2}_{t_{k+1}, t_{k+1/2}}
		+ 
		\widetilde{A}_{t_{k+1}, t_{k+1/2}}^{m_1 m_2}
%		\Delta B^{m_1}_{t_{k+1/2}, t_k}
%		\Delta \widetilde{B}^{m_2}_{t_{k+1/2}, t_k} 
		\bigr) 
        + \bar{\mathcal{M}}_{t_{k+1}, t_k}^{f, j} + \bar{\mathcal{N}}_{t_{k+1}, t_k}^{f, j},  
	\end{align*}
	where the remainder terms $\bar{\mathcal{M}}_{t_{k+1}, t_k}^{f, j}$ and $\bar{\mathcal{N}}_{t_{k+1}, t_k}^{f, j}$ have the properties stated in Lemma \ref{lemma:diff_fine}. The assertion for $\widetilde{X}^{f, [\ell]}$ follows from the same discussion above, and the proof is now complete.   
\end{proof}

\begin{lemma} \label{lemma:f_anti_diff}
	%Let $1 \le j \le N$, $1 \le \ell \le L$, $0 \le k \le 2^{\ell - 1} - 1$ and $t_k = k \Delta_{\ell-1}$. 
	It holds that: 
	\begin{align} \label{eq:X_hat_f}
		\begin{aligned}
			& \hat{X}^{f, [\ell], j}_{t_{k+1}} 
			= \hat{X}^{f, [\ell], j}_{t_k}
			+ \sum_{0 \le m \le d} 
			\sigma_{m}^j (\hat{X}^{f, [\ell]}_{t_k}) 
			\Delta B_{t_{k+1}, t_k}^{m} 
			+ \sum_{1 \le m_1, m_2 \le d} 
			\mathcal{L}_{m_1} \sigma_{m_2}^j 
			( \hat{X}_{t_k}^{f, [\ell]} ) 
			\Delta \eta_{t_{k+1}, t_k}^{m_1 m_2}  \\
			& \qquad 
			+ \hat{\mathcal{M}}_{t_{k+1}, t_k}^{f, j} 
			+ \hat{\mathcal{N}}_{t_{k+1}, t_k}^{f, j}, 
		\end{aligned}  
	\end{align}
	where the remainder terms $\hat{\mathcal{M}}_{t_{k+1}, t_k}^{f, j}$ and    $\hat{\mathcal{N}}_{t_{k+1}, t_k}^{f, j}$ satisfy the same properties as 
	$\bar{\mathcal{M}}_{t_{k+1}, t_k}^{f, j}$ and 
	$\bar{\mathcal{N}}_{t_{k+1}, t_k}^{f, j}$ in Lemma \ref{lemma:diff_fine}, respectively.
%	 follows:
%	% 
%	\begin{align} 
%		\mathbb{E}
%		\bigl[\hat{\mathcal{M}}_{t_{k+1}, t_k}^{f,  j} | \mathcal{F}_{t_{k}} \bigr] = 0, \quad 0 \le k \le 2^{\ell-1} - 1, 
%	\end{align}
%	% 
%	and for any $p \ge 2$, there exist constants $C_1, C_2 > 0$ such that 
%	% 
%	\begin{align}
%		\max_{0 \le k \le 2^{\ell-1} - 1} 
%		\mathbb{E} 
%		\bigl[ 
%		| \hat{\mathcal{M}}_{t_{k+1}, t_k}^{f, j}|^p 
%		\bigr] 
%		\le C_1 \Delta_{\ell-1}^{3p/2}, 
%		\quad 
%		\max_{0 \le k \le 2^{\ell-1} - 1} 
%		\mathbb{E}
%		\bigl[ 
%		| \hat{\mathcal{N}}_{t_{k+1}, t_k}^{f, j} |^p
%		\bigr] 
%		\le C_2 \Delta_{\ell-1}^{2p}. 
%	\end{align}
	% 
\end{lemma}

\begin{proof}
	For notational simplicity, we omit the subscript ``$[\ell]$" during the proof. 
	Due to Lemma \ref{lemma:diff_fine}, we get:
	\begin{align}
		& \hat{X}_{t_{k+1}}^{f,  j} 
		= \hat{X}_{t_k}^{f, j} 
		+ \sum_{0 \le m \le d}  
		\sigma_{m}^j (\hat{X}_{t_k}^{f}) 
		\Delta B_{t_{k+1}, t_k}^{m} 
		+ \sum_{1 \le m_1, m_2 \le d} 
		\mathcal{L}_{m_1} \sigma_{m_2}^j 
		( \hat{X}_{t_k}^{f} ) 
		\Delta \eta_{t_{k+1}, t_k}^{m_1 m_2} 
		\nonumber  \\
		& \qquad 
		+ \sum_{1 \le i \le 6} 
		{\mathcal{E}}_{t_{k+1}, t_k}^{(i), j}
		+ \tfrac{1}{2} 
		\bigl\{ 
		\bar{\mathcal{M}}_{t_{k+1}, t_k}^{f,  j}
		+ \widetilde{\mathcal{M}}_{t_{k+1}, t_k}^{\, f,  j} 
		+ \bar{\mathcal{N}}_{t_{k+1}, t_k}^{f,  j}
		+ \widetilde{\mathcal{N}}_{t_{k+1}, t_k}^{\, f,  j}  
		\bigr\}, 
	\end{align}
	where we have set:
	\allowdisplaybreaks
	\begin{align}
		{\mathcal{E}}_{t_{k+1}, t_k}^{(1), j} 
		& = \sum_{1 \le m \le d} 
		\bigl( 
		\tfrac{1}{2} \sigma_m^j (\bar{X}^{f}_{t_k}) 
		+ \tfrac{1}{2} \sigma_m^j (\widetilde{X}^{f}_{t_k})  
		- \sigma_m^j (\hat{X}_{t_k}^{f}) 
		\bigr) \Delta B_{t_{k+1}, t_k}^m;    
		\nonumber 
		\\ 
		{\mathcal{E}}_{t_{k+1}, t_k}^{(2), j} 
		& = 
		\bigl(
		\tfrac{1}{2}  \sigma_{0}^j (\bar{X}_{t_k}^{f})
		+ \tfrac{1}{2} \sigma_{0}^j (\widetilde{X}_{t_k}^{f}) 
		- \sigma_{0}^j (\hat{X}_{t_k}^{f})
		\bigr) \Delta_{\ell-1} 
		+ \bigl( \mathcal{L}_0 \sigma_{0}^j (\bar{X}_{t_k}^{f}) 
		+ \mathcal{L}_0 \sigma_{0}^j (\widetilde{X}_{t_k}^{f})  
		\bigr) 
		\tfrac{\Delta_{\ell-1}^2}{4}; 
		\nonumber  \\[0.1cm] 
		{\mathcal{E}}_{t_{k+1}, t_k}^{(3), j} 
		& = \sum_{1 \le m_1, m_2 \le d} 
		\Bigl(
		\tfrac{1}{2} \mathcal{L}_{m_1} \sigma_{m_2}^j 
		(\bar{X}_{t_k}^{f})
		+ \tfrac{1}{2} 
		\mathcal{L}_{m_1} \sigma_{m_2}^j (\widetilde{X}_{t_k}^{f}) 
		- \mathcal{L}_{m_1} \sigma_{m_2}^j 
		(\hat{X}_{t_k}^{f}) \Bigr)
		\Delta \eta_{t_{k+1}, t_k}^{m_1 m_2}; 
		\nonumber  \\[0.1cm] 
		{\mathcal{E}}_{t_{k+1}, t_k}^{(4), j} 
		& \! \! =  \!  \tfrac{1}{2} \! \! 
		\sum_{1 \le m \le d}  \! \! 
		\bigl\{ 
		\bigl(
		\mathcal{L}_m \sigma_{0}^j 
		(\bar{X}_{t_k}^{f}) + 
		\mathcal{L}_m \sigma_{0}^j (\widetilde{X}_{t_k}^{f}) 
		\bigr) \Delta \eta_{t_{k+1}, t_k}^{m 0} 
		\!  +  \! 
		\bigl(
		\mathcal{L}_0 \sigma_{m}^j (\bar{X}_{t_k}^{f}) + 
		\mathcal{L}_0 \sigma_{m}^j 
		(\widetilde{X}_{t_k}^{f}) 
		\bigr) 
		\Delta \eta_{t_{k+1}, t_k}^{0m} 
		\bigr\} ; 
		\nonumber 
		\\[0.1cm] 
		{\mathcal{E}}_{t_{k+1}, t_k}^{(5), j}  
		& = - \tfrac{1}{4} \sum_{1 \le m_1, m_2 \le d}
		\bigl\{ \bigl(
		\mathcal{L}_{m_1} \sigma_{m_2}^j (\bar{X}_{t_k}^{f}) 
		- \mathcal{L}_{m_1} \sigma_{m_2}^j (\widetilde{X}_{t_k}^{f}) \bigr)  \nonumber \\ 
		& \qquad \qquad \qquad \qquad 
		\times \bigl( 
		\Delta B_{t_{k+1}, t_{k+1/2}}^{m_1}
		\Delta B_{t_{k+1/2}, t_{k}}^{m_2} 
		- \Delta B_{t_{k+1/2}, t_{k}}^{m_1}
		\Delta B_{t_{k+1}, t_{k+1/2}}^{m_2}  
		\bigr) 
		\bigr\}; \nonumber \\  
		{\mathcal{E}}_{t_{k+1}, t_k}^{(6), j}  
		& = \tfrac{1}{4} \sum_{1 \le m_1 < m_2 \le d}
		\bigl( 
		[\sigma_{m_1}, \sigma_{m_2}]^j (\bar{X}_{t_k}^{f}) 
		- [\sigma_{m_1}, \sigma_{m_2}]^j (\widetilde{X}_{t_k}^{f}) 
		\bigr) 
		\bigl( 
%		\Delta B^{m_1}_{t_{k+1}, t_{k+1/2}} 
%		\Delta \widetilde{B}^{m_2}_{t_{k+1}, t_{k+1/2}}
       \Delta \widetilde{A}_{t_{k+1}, t_{k+1/2}}^{m_1 m_2}
		+ 
		\Delta \widetilde{A}_{t_{k+1/2}, t_{k}}^{m_1 m_2} 
%		\Delta B^{m_1}_{t_{k+1/2}, t_{k}} 
%		\Delta \widetilde{B}^{m_2}_{t_{k+1/2}, t_{k}} 
		\bigr). \nonumber 
	\end{align}
	We immediately have that, 
	%for any $1 \le j \le N$ and $0 \le k \le 2^{\ell-1} -1$, 
	% 
	$ \mathbb{E} 
		\bigl[ {\mathcal{E}}_{t_{k+1}, t_k}^{(i), j}  | \mathcal{F}_{t_{k}} 
		\bigr] = 0$, $i \in \{1, 3, 4, 5, 6\}$.
	% 
	% 
	% Since $\max \bigl\{ \mathbb{E} \bigl|  
	% \eta^{k0}_{(i+1,i)} \bigr|^p, 
	% \mathbb{E} \bigl|  \eta^{0k}_{(i+1,i)} \bigr|^p  
	% \bigr\} = \mathcal{O}(h^{3p/2})$, we immediately see that 
	% % 
	% \begin{align}
		% \max_{0 \le i \le n-1} \mathbb{E} \bigl[| \hat{\mathcal{E}}_{i}^{(5), j} |^p \bigr] 
		% =  \mathcal{O} (h^{3p/2}). 
		% \end{align}
	% 
Applying second order Taylor expansion around  $\hat{X}_{t_k}^{f}$, 
	% 
%	\begin{align*}
%		& \tfrac{1}{2} 
%		\bigl( 
%		g (\bar{X}_{t_k}^{f, [\ell]})
%		+  g (\widetilde{X}_{t_k}^{f, [\ell]})
%		\bigr)
%		- g (\hat{X}_{t_k}^{f, [\ell]}) 
%		%\\
%		% = \tfrac{1}{2} \sum_{1 \le j \le N}  
%		% \partial_j g (\hat{X}_{t_k}^{f, [\ell]}) \cdot 
%		% \bigl(
%		%   \bar{X}_{t_k}^{f, [\ell], j} 
%		%   + \widetilde{X}_{t_k}^{f, [\ell], j}
%		% - 2 \hat{X}_{t_k}^{f, [\ell], j} \bigr) \\
%		% & \qquad + \tfrac{1}{16} \sum_{1 \le j_1, j_2 \le N} 
%		% \bigl\{ \partial_{j_1} \partial_{j_2} g (\xi_1)
%		%  + \partial_{j_1} \partial_{j_2} g (\xi_2) 
%		% \bigr\} \cdot
%		% \bigl(
%		% \bar{X}_{t_k}^{f, [\ell], j_1} 
%		% - \widetilde{X}_{t_k}^{f, [\ell], j_1}
%		% \bigr) 
%		% \bigl(
%		% \bar{X}_{t_k}^{f, [\ell], j_2} 
%		% - \widetilde{X}_{t_k}^{f, [\ell], j_2}
%		% \bigr)  \\
%		= \tfrac{1}{16} \sum_{1 \le j_1, j_2 \le N} 
%		\bigl\{ \partial_{j_1} \partial_{j_2} g (\xi_1)
%		+ \partial_{j_1} \partial_{j_2} g (\xi_2) 
%		\bigr\} \cdot
%		\bigl(
%		\bar{X}_{t_k}^{f, [\ell], j_1} 
%		- \widetilde{X}_{t_k}^{f, [\ell], j_1}
%		\bigr) 
%		\bigl(
%		\bar{X}_{t_k}^{f, [\ell], j_2} 
%		- \widetilde{X}_{t_k}^{f, [\ell], j_2}
%		\bigr); \\ 
%		% 
%		& g (\bar{X}_{t_k}^{f, [\ell]}) 
%		- g (\widetilde{X}_{t_k}^{f, [\ell]}) 
%		= \sum_{1 \le j \le N} 
%		\partial_j g (\xi_3) \cdot 
%		\bigl( 
%		\bar{X}_{t_k}^{f, [\ell], j} 
%		- \widetilde{X}_{t_k}^{f, [\ell], j}
%		\bigr),   
%	\end{align*}
	%
%	for some variables $\xi_1, \xi_2, \xi_3 \in \mathbb{R}^N$. Thus, 
we have  under Assumption \ref{ass:coeff} that, for $g \in C_b^2 (\mathbb{R}^N; \mathbb{R})$ and $p \ge 2$, there exist constants $C_1, C_2 > 0$ such that for all $0 \le k \le 2^{\ell -1} -1$: 
	\begin{gather*}
		\mathbb{E} \bigl[ 
		\bigl|
		\tfrac{1}{2} 
		\bigl( 
		g (\bar{X}_{t_k}^{f})
		+  g (\widetilde{X}_{t_k}^{f})
		\bigr) - g (\hat{X}_{t_k}^{f}) 
		\bigr|^p \bigr] 
		\le C_1 \Delta_{\ell - 1}^p, \quad 
		\mathbb{E} \bigl[ 
		\bigl|
		g (\bar{X}_{t_k}^{f})
		-  g (\widetilde{X}_{t_k}^{f}) 
		\bigr|^p \bigr] 
		\le C_2 \Delta_{\ell - 1}^{p/2},   
	\end{gather*} 
	where we made use of the following result: 
	\textcolor{black}{for any $p \ge 2$, there exists $C > 0$ such that  
	\begin{align} \label{eq:m_bd_diff}
		\max_{0 \le k \le 2^{\ell-1}-1}
		\mathbb{E} 
		\bigl[ 
		\| \bar{X}_{t_k}^{f} 
		- \widetilde{X}_{t_k}^{f} \|^p 
		\bigr]
		\le C \Delta_{\ell -1}^{p/2}.
	\end{align} 
	The bound (\ref{eq:m_bd_diff}) is obtained by noticing that
	$$
	\max_{0 \le k \le 2^{\ell-1}-1}
	\mathbb{E} 
	\bigl[ 
	\| \bar{X}_{t_k}^{f} 
	- \widetilde{X}_{t_k}^{f} \|^p 
	\bigr] 
	\le \mathbb{E} 
	\bigl[ \max_{0 \le k \le 2^{\ell-1}-1} 
	\| \bar{X}_{t_k}^{f} 
	- \widetilde{X}_{t_k}^{f} \|^p 
	\bigr]
	$$
	and applying the same argument used in the proof of \cite[Lemma 4.6]{ml_anti} with the strong convergence result (Proposition \ref{prop:s_rate}) to the right-hand-side of the above inequality.}
	Then, we have that:   
	$
		\max_{0 \le k \le 2^{\ell -1} -1}
		\mathbb{E} 
		\bigl[ 
		| \mathcal{E}_{t_{k+1}, t_k}^{(2), j} |^p 
		\bigr]
		\le C_1 \Delta_{\ell-1}^{2p}, 
        \ \ 
		\max_{0 \le k \le 2^{\ell -1} -1} 
		\mathbb{E} 
		\bigl[ 
		| {\mathcal{E}}_{t_{k+1}, t_k}^{(i), j} |^p
		\bigr] = C_2 \Delta_{\ell-1}^{3p/2},  \ \  j \in \{1, 3, 4, 5, 6\} 
	$
	for some positive constants $C_1, C_2$. Setting   
	$
		\hat{\mathcal{M}}_{t_{k+1}, t_k}^{f,  j} 
		\equiv  \sum_{i \in \{1, 3, 4, 5, 6\}} 
		\mathcal{E}_{t_{k+1}, t_k}^{(i), j} 
		+ \tfrac{1}{2} 
		\bigl( 
		\bar{\mathcal{M}}_{t_{k+1}, t_k}^{f, j} 
		+ \widetilde{\mathcal{M}}_{t_{k+1}, t_k}^{\,f, j} 
		\bigr)$ and 
     $
		\hat{\mathcal{N}}_{t_{k+1}, t_k}^{f,  j} 
		\equiv \mathcal{E}_{t_{k+1}, t_k}^{(2), j} 
		+ \tfrac{1}{2} 
		\bigl( 
		\bar{\mathcal{N}}_{t_{k+1}, t_k}^{f, j} 
		+ \widetilde{\mathcal{N}}_{t_{k+1}, t_k}^{\, f, j}
		\bigr),   
	$ 
	we conclude. 
\end{proof}

\begin{lemma} \label{lemma:coare} 
%	Let $1 \le j \le N$, $1 \le \ell \le L$, $0 \le k \le 2^{\ell - 1} - 1$ and $t_k = k \Delta_{\ell-1}$. 
	It holds that: 
	\begin{align}  \label{eq:X_hat_c}
		\begin{aligned} 
			& \hat{X}_{t_{k+1}}^{c, [\ell-1], j} 
			= \hat{X}_{t_k}^{c, [\ell-1], j} 
			+ \sum_{0 \le m \le d} \sigma_{m}^j (\hat{X}_{t_k}^{c, [\ell-1]}) \Delta B_{t_{k+1}, t_k}^{m}  \\ 
			& \qquad + \sum_{1 \le m_1, m_2 \le d} \mathcal{L}_{m_1} \sigma_{m_2}^j (\hat{X}_{t_k}^{c, [\ell-1]}) 
			\Delta \eta^{m_1 m_2}_{t_{k+1}, t_k}  
			+ \hat{\mathcal{M}}_{t_{k+1}, t_k}^{c,  j}  
			+  \hat{\mathcal{N}}_{t_{k+1}, t_k}^{c,  j},   
		\end{aligned}
	\end{align}  
	where the remainder terms $\hat{\mathcal{M}}_{t_{k+1}, t_k}^{c,  j}$ and   
	$\hat{\mathcal{N}}_{t_{k+1}, t_k}^{c,  j}$ 
	satisfy the same properties as 
	$\bar{\mathcal{M}}_{t_{k+1}, t_k}^{f, j}$ and 
	$\bar{\mathcal{N}}_{t_{k+1}, t_k}^{f, j}$ in Lemma \ref{lemma:diff_fine}, respectively. 
%	are specified as follows: 
%	% 
%	\begin{align}  
%		\mathbb{E} 
%		\bigl[
%		\hat{\mathcal{M}}_{t_{k+1}, t_k}^{c, j} | \mathcal{F}_{t_{k}}
%		\bigr]
%		= 0, \quad 0 \le k \le 2^{\ell-1} - 1, 
%	\end{align}
%	% 
%	and for any $p \ge 2$ there exists a constant $C > 0$ such that
%	% 
%	\begin{align} 
%		\max_{0 \le k \le 2^{\ell - 1} - 1} 
%		\mathbb{E} \bigl[ 
%		| \hat{\mathcal{M}}_{t_{k+1}, t_k}^{c, j} |^p  
%		\bigr] \le C \Delta_{\ell-1}^{3p/2}, 
%		\quad 
%		\max_{0 \le k \le 2^{\ell - 1} - 1}  
%		\mathbb{E} \bigl[ 
%		| \hat{\mathcal{N}}_{t_{k+1}, t_k}^{c, j} |^p  
%		\bigr] \le C \Delta_{\ell-1}^{2p}.  
%	\end{align}
	% 
\end{lemma}

\begin{proof}
	For notational simplicity, we omit the subscript ``$[\ell - 1]$" during the proof.  From the discretizations (\ref{eq:X_c}) and (\ref{eq:X_c_a}), we have: 
	\begin{align*}
		& \hat{X}_{t_{k+1}}^{c, j}  
		= \hat{X}_{t_k}^{c, j}  
		+ \sum_{0 \le m \le d} 
		\sigma_{m}^j (\hat{X}_{t_k}^{c}) 
		\Delta B_{t_{k+1},t_k}^m 
		+ \sum_{1 \le m_1, m_2 \le d} 
		\mathcal{L}_{m_1} \sigma_{m_2}^j (\hat{X}_{t_k}^{c}) 
		\Delta \eta^{m_1 m_2}_{t_{k+1},t_k}  
		+ \sum_{1 \le i \le 5} \mathcal{R}_{t_{k+1}, t_k}^{(i), j}, 
	\end{align*}
	where we have defined:
	\begin{align*}
		& \mathcal{R}_{t_{k+1}, t_k}^{(1), j} 
		 = 
		\bigl( 
		\tfrac{1}{2} \sigma_{0}^j (\bar{X}_{t_k}^{c}) 
		+ \tfrac{1}{2} \sigma_{0}^j (\widetilde{X}_{t_k}^{c})  
		- \sigma_{0}^j (\hat{X}_{t_k}^{c})  
		\bigr) \Delta_{\ell-1}  
		 + \bigl( 
		\mathcal{L}_{0} \sigma_{0}^j (\bar{X}_{t_k}^{c}) 
		+ \mathcal{L}_{0} \sigma_{0}^j (\widetilde{X}_{t_k}^{c}) 
		\bigr) \tfrac{\Delta_{\ell-1}^2}{4}; 
		\\[0.2cm]
		& \mathcal{R}_{t_{k+1}, t_k}^{(2), j}   
		= \sum_{1 \le m \le d} 
		\bigl( 
		\tfrac{1}{2} 
		\sigma_{m}^j ( \bar{X}_{t_k}^{c}) 
		+ \tfrac{1}{2} \sigma_{m}^j (\widetilde{X}_{t_k}^{c}) 
		- \sigma_{m}^j (\hat{X}_{t_k}^{c})  
		\bigr) \Delta B_{t_{k+1}, t_k}^{m}; 
		\\
		& \mathcal{R}_{t_{k+1}, t_k}^{(3), j} 
		= \sum_{1 \le m_1, m_2 \le d}  \!  \! 
		\bigl( 
		\tfrac{1}{2} \mathcal{L}_{m_1} \sigma_{m_2}^j 
		( \bar{X}_{t_k}^{c}) 
		+ \tfrac{1}{2} \mathcal{L}_{m_1} \sigma_{m_2}^j
		(\widetilde{X}_{t_k}^{c})  
		- \mathcal{L}_{m_1} \sigma_{m_2}^j  (\hat{X}_{t_k}^{c})  
		\bigr) 
		\Delta \eta_{t_{k+1}, t_k}^{m_1 m_2}; 
		\\
		& \mathcal{R}_{t_{k+1}, t_k}^{(4), j}  
		= \tfrac{1}{4}  \sum_{1 \le m_1 < m_2 \le d} 
		\bigl(
		[\sigma_{m_1}, \sigma_{m_2}]^j 
		(\bar{X}_{t_k}^{c})
		-
		[\sigma_{m_1}, \sigma_{m_2}]^j 
		(\widetilde{X}_{t_k}^{c}) 
		\bigr) 
		\Delta B_{t_{k+1}, t_k}^{m_1} 
		\Delta \widetilde{B}_{t_{k+1}, t_k}^{m_2}; 
		\\
		& \mathcal{R}_{t_{k+1}, t_k}^{(5), j} 
		= \tfrac{1}{2} \! \! \sum_{1 \le m \le d} \! \! \! 
		\bigl\{ (
		\mathcal{L}_{m} \sigma_{0}^j 
		(\bar{X}_{t_k}^{c}) 
		\! + \! \mathcal{L}_{m} \sigma_{0}^j  
		(\widetilde{X}_{t_k}^{c}) ) \Delta \eta_{t_{k+1}, t_k}^{m0} + 
        ( 
		\mathcal{L}_{0} \sigma_{m}^j (\bar{X}_{t_k}^{c}) 
		+ \mathcal{L}_{0} \sigma_{m}^j (\widetilde{X}_{t_k}^{c}) 
		) 
		\Delta \eta_{t_{k+1}, t_k}^{0 m} \bigr\}.   
	\end{align*}
From the argument used in proof of Lemma \ref{lemma:f_anti_diff}, we have that $
\hat{\mathcal{M}}_{t_{k+1}, t_k}^{c, j} 
= \sum_{2 \le i \le 5}
\mathcal{R}_{t_{k+1}, t_k}^{(i), j}$ and 
$\hat{\mathcal{N}}_{t_{k+1}, t_k}^{c, j} = \mathcal{R}_{t_{k+1}, t_k}^{(1), j}
$  satisfy the properties in the statement of Lemma \ref{lemma:coare}, and then we conclude. 
\end{proof} 

\section{Supporting algorithms for multilevel particle filtering}
\label{sec:supp_algo}
We present three algorithms to support the standard/antithetic multilevel particle filtering provided in the main text.  
\allowdisplaybreaks
\begin{algorithm}[H]
	\begin{enumerate}
		\item{Input $M\in\mathbb{N}$ the cardinality of the state-space and two positive probability mass functions
			$W^1(1),\dots,W^1(M)$ and $W^2(1),\dots,W^2(M)$ on $\{1,\dots,M\}$.  Go to 2..}
		\item{Sample $U\sim\mathcal{U}_{[0,1]}$ (continuous uniform distribution on $[0,1]$). If $U<\sum_{i=1}^M\min\{W^1(i),W^2(i)\}$ go to 3.~otherwise go to 4..}
		\item{Sample an index $i_1$ using the probability mass function:
			$$
			\mathbb{P}(i_1) = \tfrac{\min\{W^1(i_1),W^2(i_1)\}}{\sum_{j_1=1}^M\min\{W^1(j_1),W^2(j_1)\}}
			$$
			set $i_2=i_1$ and go to 5..}
		\item{Sample the indices $(i_1,i_2)$ using the probability mass function:
			$$
			\mathbb{P}(i_1,i_2) = \tfrac{W^1(i_1)-\min\{W^1(i_1),W^2(i_1)\}}{1-\sum_{j_1=1}^M\min\{W^1(j_1),W^2(j_1)\}}
			\tfrac{W^2(i_2)-\min\{W^1(i_2),W^2(i_2)\}}{1-\sum_{j_1=1}^M\min\{W^1(j_1),W^2(j_1)\}}
			$$
			and go to 5..}
		\item{Return the indices $(i_1,i_2)\in\{1,\dots,M\}^2$.}
	\end{enumerate}
	\caption{Simulating a Maximal Coupling.}
	\label{alg:max_couple}
\end{algorithm} 

\begin{algorithm}[H]
	\begin{enumerate}
		\item{Input $M\in\mathbb{N}$ the cardinality of the state-space and four positive probability mass functions
			$W^1(1),\dots,W^1(M),\dots,W^4(1),\dots,W^4(M)$ on $\{1,\dots,M\}$.  Go to 2..}
		\item{Sample $U\sim\mathcal{U}_{[0,1]}$. If $U<\sum_{i=1}^M\min\{W^1(i),W^2(i),W^3(i),W^4(i)\}$ go to 3.~otherwise go to 4..}
		\item{Sample an index $i_1$ using the probability mass function:
			$$
			\mathbb{P}(i_1) = \tfrac{\min\{W^1(i_1),W^2(i_1),W^3(i_1),W^4(i_1)\}}{\sum_{j_1=1}^M\min\{W^1(j_1),W^2(j_1),W^3(j_1),W^4(j_1)\}}
			$$
			set $i_4=i_3=i_2=i_1$ and go to 5..}
		\item{Sample the indices $(i_1,\dots,i_4)$ using the probability mass function:
			$$
			\mathbb{P}(i_1,\dots,i_4) = \prod_{1 \le j \le 4} \tfrac{W^j(i_j)-\min\{W^1(i_j),W^2(i_j),W^3(i_j),W^4(i_j)\}}{1-\sum_{j_1=1}^M\min\{W^1(j_1),W^2(j_2),W^3(j_1),W^4(j_1)\}}
			$$
			and go to 5..}
		\item{Return the indices $(i_1,\dots,i_4)\in\{1,\dots,M\}^4$.}
	\end{enumerate}
	\caption{Simulating a Four-Way Coupling.}
	\label{alg:max_couple_four}
\end{algorithm} 

\allowdisplaybreaks 
\begin{algorithm}[H]
	\begin{enumerate}
		\item{Input: level of discretization $\ell\in\mathbb{N}$, final time $T\in\mathbb{N}$ and number of samples $M$. Set $\bar{X}_0^{c,[\ell-1]}(i)=\bar{X}_0^{f,[\ell]}(i)=x$, $i=1,\dots,M$ and $k=1$. Go to 2..}
		\item{Sampling: For $i=1,\dots,M$, simulate 
			$
			\bigl(\bar{X}_k^{c,[\ell-1]}(i),\bar{X}_k^{f,[\ell]}(i)\bigr)|
			\bigl(\bar{x}_{k-1}^{c,[\ell-1]}(i),\bar{x}_{k-1}^{f,[\ell]}(i)\bigr)
			$
			using the coupled dynamics \eqref{eq:X_c} and \eqref{eq:X_f} up-to time 1, with 
			starting point $\bar{x}_{k-1}^{c,[\ell-1]}(i)$, step-size $\Delta_{\ell-1}$ for \eqref{eq:X_c} and
			starting point $\bar{x}_{k-1}^{f,[\ell]}(i)$ and step-size $\Delta_{\ell}$ for \eqref{eq:X_f}.
			Go to 3..}
		\item{Resampling: For $i=1,\dots,M$ compute:
			$$
			w_k^{c,[\ell-1]}(i) := \tfrac{g(\bar{X}_{k}^{c,[\ell-1]}(i),y_k)}{\sum_{1 \le j \le M} g(\bar{X}_{k}^{c,[\ell-1]}(j),y_k)},  \  \ 
			w_k^{f,[\ell]}(i) := \tfrac{g(\bar{X}_{k}^{f,[\ell]}(i),y_k)}{\sum_{1 \le j \le M} g(\bar{X}_{k}^{f,[\ell]}(j),y_k)}.
			$$
			For any $\varphi\in\mathcal{B}_b(\mathbb{R}^{N})$ we have the estimate:
			\begin{equation} \label{eq:cpf_est}
				\pi_k^{[\ell],M}(\varphi) - \pi_k^{[\ell-1],M}(\varphi) 
				:= \sum_{i  =1}^M 
				w_k^{f,[\ell]}(i)\varphi(\bar{X}_k^{f,[\ell]}(i))-
				\sum_{i = 1}^M w_k^{c,[\ell-1]}(i)\varphi(\bar{X}_k^{c,[\ell-1]}(i)).
			\end{equation}
			For $i=1,\dots,M$ sample 
			indices $\left(j^{c,[\ell-1]}(i),j^{f,[\ell]}(i)\right)\in\{1,\dots,M\}^2$
			using Algorithm \ref{alg:max_couple}  with probability mass functions $(w_k^{c,[\ell-1]}(\cdot),w_k^{f,[\ell]}(\cdot))$, cardinality $M$
			and set $\check{X}_k^{c,[\ell-1]}(i)=\bar{X}_k^{c,[\ell-1]}(j^{c,[\ell-1]}(i))$,  $\check{X}_k^{f,[\ell]}(i)=\bar{X}_k^{f,[\ell]}(j^{f,[\ell]}(i))$.
			For $i=1,\dots,M$, set $\bar{X}_k^{c,[\ell-1]}(i)=\check{X}_k^{c,[\ell-1]}(i)$, 
			$\bar{X}_k^{f,[\ell]}(i)=\check{X}_k^{f,[\ell]}(i)$, 
			$k=k+1$, if $k=T+1$ go to 4.~otherwise go to 2..}
		\item{Return the estimates $\pi_1^{[\ell],M}(\varphi) - \pi_1^{[\ell-1],M}(\varphi),\dots,\pi_T^{[\ell],M}(\varphi) - \pi_T^{[\ell-1],M}(\varphi)$ from \eqref{eq:cpf_est}.}
	\end{enumerate}
	\caption{Coupled Particle Filter using the weak second order scheme \eqref{eq:scheme}. The algorithm is stopped at a time $T$, but need not be.}
	\label{alg:cpf}
\end{algorithm}

\section{Case study for small diffusions} \label{sec:B}
%\subsection{Effectiveness of the Proposed Antithetic Scheme} \label{sec:sd}
%  
% We have proposed a new antithetic estimator based on the weak second order scheme (\ref{eq:scheme}). 
We discuss the benefit of the proposed AMLMC estimator over the original one based upon the truncated Milstein scheme proposed by \cite{ml_anti}. Throughout this subsection, we refer to the AMLMC estimator using the weak second order scheme and the truncated-Milstein scheme as AW2 and ATM, respectively. We then compare these two estimators in terms of bias (weak error) and variance. As for the bias,  AW2 achieves weak error 2, while ATM has weak error 1 as we explained in the main text. Regarding variance, Theorem \ref{thm:strong_err_coupl} and \cite[Theorem 4.10]{ml_anti} state that the variance of coupling at level $\ell \in \{1, \ldots, L\}$ is of size $\mathcal{O} (\Delta_{\ell-1}^{2})$ for both AW2 and ATM. However, for the class of small-noise SDEs, which appear very often in applications, we will show analytically that AW2 has a smaller variance compared to ATM. %due to the presence of the small diffusion parameter. 
% 
%\subsubsection{Set-up} \label{sec:sd_set}
% 

We introduce the following small-noise diffusion model as a subclass of (\ref{eq:sde}) in the main text:
\begin{align} \label{eq:small_diffusion} 
	d X_t = 
	\begin{bmatrix}
		d X_{S, t} \\ 
		d X_{R, t} 
	\end{bmatrix}
	=
	\begin{bmatrix}
		\sigma_{S, 0} (X_t)  \\[0.1cm] 
		\sigma_{R, 0} (X_t) 
	\end{bmatrix}  dt 
	+ \sum_{1 \le j \le d}
	\begin{bmatrix}
		\mathbf{0}_{N_S} \\[0.1cm] 
		\mu \, \sigma_{R, j} (X_t) 
	\end{bmatrix} 
	dB_t^j, \quad X_0 = x \in \mathbb{R}^N, 
\end{align}
where the coefficients $\sigma_{S, 0}$, $\sigma_{R, j}, \, 0 \le j \le d,$ are defined as in (\ref{eq:sde}), and the small parameter $\mu \in (0,1)$ is now indicated in the diffusion coefficient. 
%For comparison of the variance of AW2 and ATM, we rewrite here the definition of standard/antithetic discretization of truncated Milstein scheme introduced in \cite{ml_anti}.
We write the standard and antithetic discretization of truncated Milstein scheme introduced in \cite{ml_anti} as $\{ \bar{X}^{\textrm{TM},  [\ell]}_{t} \}_{t \in \mathbf{g}^{f, [\ell]}}$ and 
$\{ \widetilde{X}^{\textrm{TM},  [\ell]}_{t} \}_{t \in \mathbf{g}^{f, [\ell]}}$, respectively,  where  the grids $\mathbf{g}^{f, [\ell]}, \, \ell \le L$ is defined in Section \ref{sec:weak_mlmc_scheme} in the main text.  
%The explicit form of the schemes can be found in Appendix \ref{sec:TM}. 
% 
Then, for a suitable test function $\varphi: \mathbb{R}^N \to \mathbb{R}$ and an integer $L \ge 1$, ATM is defined via the following identity: 
\begin{align*}
	\mathbb{E} [\varphi (\bar{X}_T^{\textrm{TM}, [L]})] = \mathbb{E} [\varphi (\bar{X}_T^{\textrm{TM}, [0]})]
	+ \sum_{1 \le \ell \le L} 
	\mathbb{E} 
	\bigl[ 
	\mathcal{P}^{\textrm{TM}, \varphi}_{f, \ell} - \mathcal{P}^{\textrm{TM}, \varphi}_{c, \ell-1} \bigr], 
\end{align*}
where  
$ 
	\mathcal{P}^{\textrm{TM}, \varphi}_{f, \ell} 
	= \tfrac{1}{2}
	\bigl( 
	\varphi (\bar{X}_{T}^{\textrm{TM}, [\ell]})
	+ \varphi (\widetilde{X}_{T}^{\textrm{TM}, [\ell]})
	\bigr),  \  
	\mathcal{P}_{c, \ell-1}^{\textrm{TM}, \varphi} 
	= \varphi (\bar{X}_{T}^{\textrm{TM}, [\ell-1]}).    
$ 
We also define, for $\ell = 0,1, \ldots, L$, 
$
\hat{X}^{\textrm{TM}, [\ell]}_t 
\equiv \tfrac{1}{2} 
\bigl( 
\bar{X}^{\textrm{TM}, [\ell]}_t  
+
\widetilde{X}^{\textrm{TM}, [\ell]}_t 
\bigr),  \  t \in \mathbf{g}^{f, [\ell]}.   
$ 
For AW2, we make use of the same notation as in (\ref{eq:cpl_weak2}) in Section \ref{sec:weak_mlmc_scheme}, but now the scheme is applied to the SDE (\ref{eq:small_diffusion}) and then the small parameter $\mu$ is incorporated in the definition of the antithetic couplings and the discretization.  
%
%\subsubsection{Analytic results}
% 

We first state the strong convergence of the weak second order scheme and the truncated Milstein scheme with an emphasis on $\mu \in (0,1)$. The proof of the next result is provided in Appendix \ref{app:s_err_sd}. 
\begin{proposition} \label{prop:s_err_sd}
	Work under the model class (\ref{eq:small_diffusion}). 
	Let $\ell = 0, \ldots, L$. For any $p \ge 1$, there exist constants $C_1, C_2 > 0$ independent of $\mu \in (0,1)$ such that:
	\begin{align*}
		\mathbb{E} 
		\bigl[ \max_{t \in \mathbf{g}^{f, [\ell]}} 
		\| X_{t} - \bar{X}_{t}^{f, [\ell]} \|^{2p} 
		\bigr] 
		\le C_1 \mu^{4p} \Delta_\ell^{p}, 
		\qquad  
		\mathbb{E} 
		\bigl[ 
		\max_{t \in \mathbf{g}^{f, [\ell]}}   
		\| 
		X_{t} - \bar{X}_{t}^{\mathrm{TM}, [\ell]} 
		\|^{2p} 
		\bigr] 
		\le C_2 \mu^{4p} \Delta_\ell^{p}. 
	\end{align*}  
\end{proposition}
Notice that both strong error bounds are of order 1 w.r.t.~the step-size $\Delta_\ell$ in agreement with Table~\ref{table:literature} in the main text, but now the effect of  $\mu$ on the bounds is made clear. We move on to moment estimates for the antithetic couplings of AW2 and ATM. We focus on the couplings at a fixed level $\ell = 1, \ldots, L$. Then, it holds from \cite[Lemma 2.2.]{ml_anti} and Lemma \ref{lemma:s_bd_cpl} that for any $\varphi \in C_b^2 (\mathbb{R}^N; \mathbb{R})$ and $p \ge 2$,  
\begin{align}
	& \begin{aligned} 
		& \mathbb{E} 
		\bigl[ 
		\bigl( 
		\mathcal{P}_{f, \ell}^{\textrm{TM}, \varphi} 
		- \mathcal{P}_{c, \ell -1}^{\textrm{TM},\varphi} 
		\bigr)^p 
		\bigr] 
		\le C_1 \mathbb{E} \bigl[ 
		\| \hat{X}_{T}^{\textrm{TM},  [\ell]} - \bar{X}_{T}^{\textrm{TM},  [\ell-1]} \|^p  \bigr]  \\
		& \qquad \qquad  + C_2 \mathbb{E} \bigl[ \| \bar{X}_{T}^{\textrm{TM}, [\ell]}
		- \widetilde{X}_{T}^{\textrm{TM}, [\ell]} \|^{2p} \bigr]; 
	\end{aligned} 
	\label{eq:cpl_mil_sd} \\[0.1cm] 
	& \begin{aligned} 
		& \mathbb{E} \bigl[ 
		\bigl( 
		\mathcal{P}_{f, \ell}^{\varphi} 
		- \mathcal{P}_{c, \ell -1}^{\varphi} 
		\bigr)^p \bigr]  
		\le \widetilde{C}_1 \mathbb{E} 
		\bigl[ \| \hat{X}_{T}^{f, [\ell]} - \hat{X}_{T}^{c, [\ell-1]} \|^p  \bigr]
		+\widetilde{ C}_2 \mathbb{E} \bigl[ \| \bar{X}_{T}^{f, [\ell]}
		- \widetilde{X}_{T}^{f, [\ell]} \|^{2p} \bigr]  \\[0.1cm] 
		& \qquad  \qquad 
		+ \widetilde{C}_3 \mathbb{E} \bigl[ \| \bar{X}_{T}^{c, [\ell-1]}
		- \widetilde{X}_{T}^{c, [\ell-1]} \|^{2p} \bigr], 
	\end{aligned} 
	\label{eq:cpl_w2_sd}
\end{align}
for some positive constants $C_1, C_2, \widetilde{C}_1, \widetilde{C}_2, \widetilde{C}_3$ independent of $\mu \in (0,1)$. Due to Proposition \ref{prop:s_err_sd}, the second term in the R.H.S. of (\ref{eq:cpl_mil_sd}) and the second/third term in the R.H.S. of (\ref{eq:cpl_w2_sd}) are bounded by $C \mu^{4p} \Delta_{\ell-1}^p$, where $C > 0$ is a constant independent of $\mu$. The next result is critical in highlighting analytically a difference between AW2 and ATM, with its proof in Section \ref{app:m_bd_sd}. 
\begin{theorem} \label{thm:m_bd_sd}
	Work under the model class (\ref{eq:small_diffusion}). 
	Let $1 \le \ell \le L$. For any $p \ge 2$, there exist constants $C_1, C_2 > 0$ independent of $\mu \in (0,1)$ such that:
	\begin{align}
		&		\mathbb{E} 
		\bigl[ \, 
		\max_{t \in \mathbf{g}^{c, [\ell-1]}} 
		\| \hat{X}^{f, [\ell]}_{t} - \hat{X}^{c, [\ell-1]}_{t} 
		\|^p
		\bigr] \le C_1 \mu^{p} \, \Delta_{\ell-1}^p; 
		\label{eq:w2_sd} \\ 
		& \mathbb{E} 
		\bigl[ \, 
		\max_{t \in \mathbf{g}^{c, [\ell-1]}} 
		\| 
		\hat{X}^{\mathrm{TM}, [\ell]}_{t}
		- \bar{X}^{\mathrm{TM}, [\ell-1]}_{t} 
		\|^p
		\bigr] \le C_2 \, \Delta_{\ell-1}^p. 
		\label{eq:tmil_sd} 
	\end{align} 
\end{theorem}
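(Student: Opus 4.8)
The plan is to prove both estimates by a one–coarse–step expansion of the antithetically averaged schemes, classifying every local contribution by its joint order in $\mu$ and $\Delta_{\ell-1}$, and then closing an $L^p$ recursion with the Burkholder--Davis--Gundy (BDG) inequality together with a discrete Grönwall argument; this is a $\mu$–explicit refinement of the reasoning behind Theorem~\ref{thm:strong_err_coupl}. The decisive structural distinction between the two schemes is the order to which they integrate the (noiseless) drift: scheme~(\ref{eq:scheme}) carries the term $\mathscr{L}_0\sigma_0\,\Delta\eta^{00}=\tfrac{1}{2}\mathscr{L}_0\sigma_0\,\Delta_\ell^2$ and is therefore second order in the drift, whereas the truncated Milstein scheme uses a plain Euler drift step. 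This single fact is what converts the $\mu$–independent drift error into a higher-order term for AW2 while leaving it dominant for ATM.

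For (\ref{eq:w2_sd}) I would first average (\ref{eq:X_f})--(\ref{eq:X_a}) and (\ref{eq:X_c})--(\ref{eq:X_c_a}) to obtain one–coarse–step recursions for $\hat{X}^{f,[\ell]}$ and $\hat{X}^{c,[\ell-1]}$. Two cancellations occur by construction: the proxy term $\tfrac{1}{2}[\sigma_{j_1},\sigma_{j_2}]\,\Delta\widetilde{A}$ enters with opposite signs in $\bar{X}$ and $\widetilde{X}$ and so disappears from both averages, while on the fine grid the interchange of the two sub-step increments (write $\delta_1,\delta_2$ for the two half-step Brownian increments over a coarse step) annihilates the antisymmetric, Lévy-area-type part of the mixed contribution $\mathscr{L}_{j_1}\sigma_{j_2}\,\delta_1^{j_1}\delta_2^{j_2}$, retaining only its symmetric part. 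Taylor-expanding the coefficients about the common point and setting $D_{t_k}=\hat{X}^{f,[\ell]}_{t_k}-\hat{X}^{c,[\ell-1]}_{t_k}$, I would sort the local increment of $D$ by powers of $\mu$. The $\mu^0$ part is pure drift: because (\ref{eq:scheme}) reproduces the $\mathcal{O}(\Delta^2)$ drift curvature $\tfrac{1}{2}\mathscr{L}_0\sigma_0$ identically on one coarse step and on two fine sub-steps, the fine--coarse mismatch is only $\mathcal{O}(\Delta_{\ell-1}^3)$ per step, hence of higher order in $\Delta_{\ell-1}$. The $\mu^1$ part collects the drift--diffusion interactions carried by $\mathscr{L}_0\sigma_j$ and $\mathscr{L}_j\sigma_0$ (through $\Delta\eta^{0j},\Delta\eta^{j0}$) together with the terms arising when a sub-step diffusion coefficient is expanded against a drift increment; its leading piece is a martingale difference of size $\mathcal{O}(\mu\,\Delta_{\ell-1}^{3/2})$. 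The $\mu^{\ge 2}$ parts are diffusion--diffusion, whose antisymmetric piece has already cancelled and whose symmetric piece matches between the grids (being reproduced by the explicit terms $\mathscr{L}_{j_1}\sigma_{j_2}\Delta\eta^{j_1 j_2}$ on comparing the coarse $\Delta\eta$ against the two fine ones), leaving residuals that are either $\mathcal{O}(\mu^2\Delta_{\ell-1}^2)$ or linear in $D_{t_k}$.

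With this classification I would form the recursion $\mathbb{E}[\max_{m\le k}\|D_{t_m}\|^p]\le C\,\Delta_{\ell-1}\sum_{m<k}\mathbb{E}\|D_{t_m}\|^p + C(\mu\Delta_{\ell-1})^p + C\Delta_{\ell-1}^{2p}$, where the Lipschitz dependence of the coefficients on the state yields the Grönwall term, and the accumulated $\mathcal{O}(\mu\Delta_{\ell-1}^{3/2})$ martingale increments are controlled by BDG: summing $n=2^{\ell-1}$ increments gives $(\sqrt{n}\,\mu\Delta_{\ell-1}^{3/2})^p=(\mu\Delta_{\ell-1})^p$. Since the $\mu^0$ and $\mu^{\ge2}$ residuals are $\mathcal{O}(\Delta_{\ell-1}^{2p})$ and $\mathcal{O}(\mu^{2p}\Delta_{\ell-1}^{2p})$, both of higher order in $\Delta_{\ell-1}$ than the $\mu$-linear martingale term, discrete Grönwall delivers (\ref{eq:w2_sd}) with $C_1$ independent of $\mu$. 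The companion bound (\ref{eq:tmil_sd}) follows from the same scheme of proof applied to (\ref{eq:X_f_Mil})--(\ref{eq:X_a_Mil}); the only, but crucial, change is that the Euler drift of the truncated Milstein scheme does \emph{not} reproduce the $\mathcal{O}(\Delta^2)$ curvature, so two fine sub-steps and one coarse step disagree at order $\Delta_{\ell-1}^2$ in a systematic, $\mu$–independent way. This mismatch accumulates to a global contribution of size $\Delta_{\ell-1}$, which dominates every diffusion-driven (hence $\mu$-carrying) term and produces the $\mu$-free rate $C_2\Delta_{\ell-1}^p$.

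The principal obstacle is the $\mu$–bookkeeping through the two cancellations: one must verify, uniformly in $\mu\in(0,1)$, that every surviving contribution is either the $\mathcal{O}(\mu\Delta_{\ell-1}^{3/2})$ martingale increment or of strictly higher order in $\Delta_{\ell-1}$, and in particular that \emph{no} uncancelled $\mathcal{O}(\mu^2\Delta_{\ell-1})$ diffusion--diffusion increment remains; such a term would itself be a martingale difference and would accumulate to $\mathcal{O}(\mu^2\Delta_{\ell-1}^{1/2})$, which is of lower order in $\Delta_{\ell-1}$ and would destroy the rate. Establishing this requires tracking the exact coefficients of the symmetric mixed terms on both grids and confirming they coincide, which is precisely where the explicit $\mathscr{L}_{j_1}\sigma_{j_2}\Delta\eta^{j_1 j_2}$ terms of (\ref{eq:scheme}) do the work. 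The remaining estimates are routine once Proposition~\ref{prop:s_err_sd} is invoked to bound the $L^p$ norms of the schemes, and the increments appearing in the expansion, uniformly in $\mu$.
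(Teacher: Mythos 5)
Your proposal follows essentially the same route as the paper's proof: the paper's Appendix B establishes exactly your one-coarse-step expansions (Lemmas B.1--B.5), with the antithetic averages cancelling the $\Delta\widetilde{A}$ proxy terms and the antisymmetric half-step cross terms, martingale remainders of size $\mathcal{O}(\mu\,\Delta_{\ell-1}^{3/2})$ and systematic remainders of size $\mathcal{O}(\mu^{2}\Delta_{\ell-1}^{2})$, closed by BDG and discrete Gr\"onwall as in the proof of \cite[Theorem 4.10]{ml_anti}. In particular, your identification of the drift-curvature bookkeeping as the decisive point matches the paper exactly: for AW2 the coarse $\mathscr{L}_0\sigma_0\,\Delta\eta^{00}$ term is reproduced identically by the two fine sub-steps ($\Delta_\ell^2+2\Delta\eta^{00}_{\mathrm{half}}=\Delta\eta^{00}_{\mathrm{coarse}}$), while for ATM the averaged fine scheme generates $\tfrac{1}{2}\mathscr{L}_0\sigma_0\,\Delta\eta^{00}$, and it is precisely this ``$\tfrac{1}{2}$'' mismatch (the paper's display (\ref{eq:tmil_half})) that accumulates to the $\mu$-independent $\mathcal{O}(\Delta_{\ell-1}^p)$ bound in (\ref{eq:tmil_sd}).
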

The bounds (\ref{eq:cpl_mil_sd}), (\ref{eq:cpl_w2_sd}) and Theorem \ref{thm:m_bd_sd} provide the following result.
\begin{corollary}
	Work under the model class (\ref{eq:small_diffusion}).  Let $\varphi \in C_b^2 (\mathbb{R}^N; \mathbb{R})$ and $1 \le l \le L$. For any $p \ge 2$, there exist constants $C_{\mathrm{ATM}}, C_{\mathrm{AW2}} > 0$ independent of $\mu \in (0,1)$ such that: 
	\begin{align*}
		\mathbb{E} 
		\bigl[ 
		\bigl(
		\mathcal{P}^{\varphi}_{f, \ell} 
		- \mathcal{P}^{\varphi}_{c, \ell-1}  \bigr)^p 
		\bigr] \le C_{\mathrm{AW2}} \, \mu^{p} 
		\Delta_{\ell -1}^p, \  \ \ 
		\mathbb{E} 
		\bigl[
		\bigl(\mathcal{P}^{\mathrm{TM}, \varphi}_{f, \ell} - \mathcal{P}^{\mathrm{TM}, \varphi}_{c, \ell-1}  \bigr)^p 
		\bigr] \le C_{\mathrm{ATM}} \, \Delta_{\ell -1}^p.  
	\end{align*} 
\end{corollary}
From the above result, we conclude that within the small-noise SDE class  (\ref{eq:small_diffusion}) the variance of couplings for AW2 can be smaller than that of ATM due to the diffusion parameter $\mu \in (0,1)$. Thus, for such a class, it is expected that the smaller variance together with the smaller bias, i.e. the second order weak convergence, contributes to effective reduction of computational cost in the AMLMC with the weak second order scheme compared to the AMLMC based on the truncated Milstein scheme. We here briefly remark that the improvement in the variance bound for AW2 in Theorem \ref{thm:m_bd_sd} comes from the inclusion of higher-order stochastic Taylor expansion terms from the drift coefficient $\sigma_0$ in scheme (\ref{eq:scheme}) (note that the \ref{eq:t_mil} scheme contains higher order terms from the diffusion coefficients only). 

\subsection{Proof of Proposition \ref{prop:s_err_sd}} 
\label{app:s_err_sd}
\begin{proof} 
	We will show the strong convergence bound only for the \ref{eq:scheme} scheme under the small diffusion regime (\ref{eq:small_diffusion}) because the same discussion applies to the case of the truncated Milstein scheme. We follow the proof of Proposition \ref{prop:s_rate} in Appendix \ref{app:s_rate} and introduce  
$  \mathcal{S}_i^{(\mu)} \equiv  \mathbb{E}  \bigl[ \max_{0 \le k \le i} 	\| X_{t_k} - \bar{X}_{t_k}  \|^p \bigr],  \  1 \le i \le 2^{\ell}$.  
	Then, we show that for any $p \ge 2$, there exists a constant $C > 0$ independent of $\mu \in (0,1)$ such that: 
	\begin{align} \label{eq:gw_sd}
		\mathcal{S}_i^{(\mu)} 
		\le C 
		\Bigl( 
		\mu^{2p} \Delta_{\ell}^{p/2}
		+ 
		\sum_{0 \le n \le i-1} \mathcal{S}_n^{(\mu)} \cdot \Delta_\ell 
		\Bigr).  
	\end{align}  
	(\ref{eq:gw_sd}) indeed holds by adjustment of $\mu \in (0,1)$  for the diffusion coefficients $\sigma_j, \, 1 \le j \le d$. In particular, now the  term $	\mathcal{T}_i^{(4), j}$ appearing in the proof of Proposition \ref{prop:s_rate} (Appendix \ref{app:s_rate} in the main text) is bounded as: 
$
\mathcal{T}_i^{(4), j} \le C \mu^{2p} \Delta_{\ell}^{p/2},
$ 
	for some constant $C > 0$ independent of $\mu \in (0,1)$. Thus, we obtain (\ref{eq:gw_sd}) and conclude from the discrete Gr\"onwall's inequality. 
\end{proof} 

\subsection{Technical results for Theorem \ref{thm:m_bd_sd}}
\label{app:tech_sd}
Throughout this section, for notational simplicity, we write 
%we omit the notation `$[\ell-1]$' and `$[\ell]$' appearing in the original presentation of the numerical schemes w.r.t to \ref{eq:scheme}, e.g.,  
$\bar{X}^{f}_{t} \equiv \bar{X}^{f, [\ell]}_{t}, \ \bar{X}^{c}_{t} \equiv \bar{X}^{c, [\ell-1]}_{t}, \hat{X}^{f}_{t} \equiv \hat{X}^{f, [\ell]}_{t}$ and $\hat{X}^{c}_{t} \equiv \hat{X}^{c, [\ell-1]}_{t}$. We also set $1 \le j \le N$, $1 \le \ell \le L$,  $0 \le k \le 2^{\ell - 1} - 1$ and $t_k = k \Delta_{\ell -1}$. 
\begin{lemma} \label{lemma:diff_fine_w2} 
	Work under the model class (\ref{eq:small_diffusion}).  
%	Let $1 \le j \le N$, $1 \le \ell \le L$, $0 \le k \le 2^{\ell - 1} - 1$ and $t_k = k \Delta_{\ell -1}$. 
	It holds that:
	\begin{align}
		& \bar{X}^{f, j}_{t_{k+1}} 
		= \bar{X}^{f,  j}_{t_k} 
		+  \sum_{0 \le m \le d} \mu^{\mathbf{1}_{m > 0}}
		\sigma_{m}^j 
		\bigl( \bar{X}^{f}_{t_k} \bigr) 
		\Delta B_{t_{k+1}, t_k}^{m}  \nonumber \\ 
		& \quad 
		+ \mathcal{L}_{0} \sigma_{0}^j 
		\bigl(
		\bar{X}^{f}_{t_k}
		\bigr) 
		\Delta \eta_{t_{k+1}, t_k}^{00}
		+ \mu^2  \sum_{1  \le m_1, m_2 \le d} 
		 \mathcal{L}_{m_1} \sigma_{m_2}^j  (\bar{X}_{t_k}^{f}) \Delta \eta_{t_{k+1}, t_k}^{m_1 m_2} \nonumber \\ 
		& \quad 
		- \tfrac{\mu^2}{2}  \sum_{1 \le m_1, m_2 \le d} 
		\mathcal{L}_{m_1} \sigma_{m_2}^j 
		\bigl( \bar{X}^{f}_{t_k} \bigr)
		\bigl( 
		\Delta B_{t_{k+1}, t_{k+1/2}}^{m_1}  
		\Delta B_{t_{k+1/2}, t_k}^{m_2}
		-
		\Delta B_{t_{k+1/2}, t_k}^{m_1} 
		\Delta B_{t_{k+1}, t_{k+1/2}}^{m_2} 
		\bigr) \nonumber \\ 
		& \quad 
		+ \tfrac{\mu^2}{2} 
		\sum_{1 \le  m_1 <  m_2 \le d} 
		\bigl[\sigma_{m_1}, \sigma_{m_2} \bigr]^j
		\bigl( \bar{X}^{f}_{t_k} \bigr) 
		\bigl( 
%		\Delta B^{m_1}_{t_{k+1/2}, t_k} 
%		\Delta \widetilde{B}^{m_2}_{t_{k+1/2}, t_k} 
		\Delta \widetilde{A}_{t_{k+1/2}, t_k}^{m_1 m_2}
		+ 
%		\Delta B^{m_1}_{t_{k+1}, t_{k+1/2}}
%		\Delta \widetilde{B}^{m_2}_{t_{k+1}, t_{k+1/2}} 
        \Delta \widetilde{A}_{t_{k+1}, t_{k+1/2}}^{m_1 m_2} 
		 \bigr)  + \bar{\mathcal{M}}^{f, j}_{t_{k+1}, t_k} 
		+ \bar{\mathcal{N}}^{f, j}_{t_{k+1}, t_k},  \nonumber 
%		\label{eq:X_f_sd}
	\end{align}
	where the remainder terms are such that 
	$\mathbb{E} \bigl[ \bar{\mathcal{M}}^{f, j}_{t_{k+1}, t_k} | \mathcal{F}_{t_{k}} \bigr] = 0$,  
	and for any $p \ge 2$ there exist constants $C_1, C_2 > 0$ independent of $\mu \in (0,1)$ so that: 
	\begin{align}
		\max_{0 \le k \le 2^{\ell-1} - 1} 
		\mathbb{E} \bigl[ | \bar{\mathcal{M}}^{f, j}_{t_{k+1}, t_k}|^p \bigr] 
		\le C_1 \mu^{p} \Delta_{\ell-1}^{3p/2}, \quad  
		\max_{0 \le k \le 2^{\ell-1} - 1} 
		\mathbb{E} \bigl[ | \bar{\mathcal{N}}^{f, j}_{t_{k+1}, t_k} |^p \bigr] 
		\le C_2 \mu^{2p} \Delta_{\ell -1}^{2p}. 
	\end{align}
	Similarly, it holds that:
	\begin{align}  
		& \widetilde{X}^{f, j}_{t_{k+1}} 
		= \widetilde{X}^{f,  j}_{t_k} 
		+ \sum_{0 \le m \le d} \mu^{\mathbf{1}_{m > 0}}
		\sigma_{m}^j \bigl(\widetilde{X}^{f}_{t_k} \bigr) 
		\Delta B_{t_{k+1}, t_k}^{m}  \nonumber \\ 
		& \quad 
		+ \mathcal{L}_{0} \sigma_{0}^j 
		\bigl(
		\widetilde{X}^{f}_{t_k}
		\bigr) 
		\Delta \eta_{t_{k+1}, t_k}^{00} 
		+ \mu^2  \sum_{1 \le m_1, m_2 \le d}  
		 \mathcal{L}_{m_1} \sigma_{m_2}^j  (\widetilde{X}_{t_k}^{f}) \Delta \eta_{t_{k+1}, t_k}^{m_1 m_2} \nonumber \\ 
		& \quad 
		+ \tfrac{\mu^2}{2} \sum_{1 \le m_1, m_2 \le d} 
		\mathcal{L}_{m_1} \sigma_{m_2}^j 
		\bigl( \widetilde{X}^{f}_{t_k} \bigr)
		\bigl( 
		\Delta B_{t_{k+1}, t_{k+1/2}}^{m_1}  
		\Delta B_{t_{k+1/2}, t_k}^{m_2}
		-
		\Delta B_{t_{k+1/2}, t_k}^{m_1} 
		\Delta B_{t_{k+1}, t_{k+1/2}}^{m_2} 
		\bigr) 
		\nonumber \\ 
		& \quad 
		- \tfrac{\mu^2}{2} 
		\sum_{1 \le  m_1 <  m_2 \le d} 
		\bigl[\sigma_{m_1}, \sigma_{m_2} \bigr]^j
		\bigl( \widetilde{X}^{f}_{t_k}  \bigr) 
		\bigl( 
%		\Delta B^{m_1}_{t_{k+1/2}, t_k} 
%		\Delta \widetilde{B}^{m_2}_{t_{k+1/2}, t_k} 
        \Delta \widetilde{A}_{t_{k+1/2}, t_k}^{m_1 m_2}
		+ 
%		\Delta B^{m_1}_{t_{k+1}, t_{k+1/2}}
%		\Delta \widetilde{B}^{m_2}_{t_{k+1}, t_{k+1/2}} 
        \Delta \widetilde{A}_{t_{k+1}, t_{k+1/2}}^{m_1 m_2} 
		\bigr) + \widetilde{\mathcal{M}}^{f,  j}_{t_{k+1}, t_k} 
		+ \widetilde{\mathcal{N}}^{f,  j}_{t_{k+1}, t_k},  \nonumber 
%		\label{eq:X_f_anti_sd}
	\end{align}
	where the remainder terms 
	$\widetilde{\mathcal{M}}_{t_{k+1}, t_k}^{f, j}$ and $\widetilde{\mathcal{N}}_{t_{k+1}, t_k}^{f, j}$ 
	satisfy the same properties as 
	$\bar{\mathcal{M}}_{t_{k+1}, t_k}^{f, j}$ and 
	$\bar{\mathcal{N}}_{t_{k+1}, t_k}^{f, j}$, respectively. 
\end{lemma} 
\begin{proof}
We show only for $\bar{X}^{f, j}$ because the latter follows from a similar argument. Following the argument used in the proof of Lemma \ref{lemma:diff_fine} in Appendix \ref{app:tech}, we obtain: 
	\begin{align} 
		& \bar{X}_{t_{k+1}}^{{f}, j}
		= \bar{X}_{t_k}^{{f}, j}
		+ \sum_{0 \le m \le d} 
		\mu^{\mathbf{1}_{m > 0}} \sigma_{m}^j (\bar{X}_{t_k}^{f}) 
		\Delta B_{t_{k+1},t_k}^{m}
		+ \mathcal{L}_{0} \sigma_{0}^j
		(\bar{X}_{t_{k}}^{f}) 
		\bigl\{ 
		\Delta_\ell^2
		+ 
		\Delta \eta_{t_{k+1/2}, t_k}^{00}
		+  
		\Delta \eta_{t_{k+1}, t_{k+1/2}}^{00} 
		\bigr\}
		\nonumber \\ 
		& \quad  
		+ \mu^2 \sum_{1 \le m_1, m_2 \le d} 
		\mathcal{L}_{m_1} \sigma_{m_2}^j
		(\bar{X}_{t_{k}}^{f}) 
		\bigl\{ 
		\Delta B_{t_{k+1/2}, t_{k}}^{m_1}
		\Delta B_{t_{k+1}, t_{k+1/2}}^{m_2}
		+ 
		\Delta \eta_{t_{k+1/2}, t_k}^{m_1 m_2}
		+  
		\Delta \eta_{t_{k+1}, t_{k+1/2}}^{m_1 m_2}  
		\bigr\}
		\nonumber \\  
		& \quad 
		+ \tfrac{\mu^2}{2} 
		\sum_{1 \le m_1 < m_2 \le d} 
		[ \sigma_{m_1}, \sigma_{m_2}]^j 
		\bigl(\bar{X}_{t_k}^{f} \bigr)  
		\bigl\{ 
%		\Delta B^{m_1}_{t_{k+1/2}, t_k}
%		\Delta \widetilde{B}^{m_2}_{t_{k+1/2}, t_k}  
        \Delta \widetilde{A}_{t_{k+1/2}, t_k}^{m_1 m_2}
		+ 
%		\Delta B^{m_1}_{t_{k+1}, t_{k+1/2}}
%		\Delta \widetilde{B}^{m_2}_{t_{k+1}, t_{k+1/2}}
        \Delta \widetilde{A}_{t_{k+1}, t_{k+1/2}}^{m_1 m_2} 
		\bigr\} 
		+ \mathcal{M}_{t_{k+1}, t_k}^{f, j} 
		+ \mathcal{N}_{t_{k+1}, t_k}^{f, j}, \nonumber 
	\end{align}  
	where the residuals $\mathcal{M}_{t_{k+1}, t_k}^{f, j}$ and $\mathcal{N}_{t_{k+1}, t_k}^{f, j}$ satisfy the same properties as $\bar{\mathcal{M}}_{t_{k+1}, t_k}^{f, j}$ and $\bar{\mathcal{N}}_{t_{k+1}, t_k}^{f, j}$, respectively. 
%	: $\mathbb{E} \bigl[\mathcal{M}_{t_{k+1}, t_k}^{f, (\mu), j} | \mathcal{F}_{t_k} \bigr] = 0$ and for any $p \ge 2$, 
%	% 
%	\begin{align}
%		\mathbb{E} 
%		\bigl[
%		\bigl| 
%		\mathcal{M}_{t_{k+1}, t_k}^{f, (\mu), j} 
%		\bigr|^p 
%		\bigr] 
%		\le C_1 \, \mu^{p} \Delta_{\ell-1}^{3p/2}, 
%		\qquad    
%		\mathbb{E} 
%		\bigl[
%		\bigl| 
%		\mathcal{N}_{t_{k+1}, t_k}^{f, (\mu), j} 
%		\bigr|^p 
%		\bigr] \le C_2 \, \mu^{2p} \Delta_{\ell-1}^{2p},  
%	\end{align} 
%	% 
%	for some constants $C_1, C_2 > 0$. 
	Noticing that: 
	\allowdisplaybreaks 
	\begin{gather*}
		\Delta_\ell^2 + \Delta \eta_{t_{k+1/2}, t_k}^{00} + \Delta \eta_{t_{k+1}, t_{k+1/2}}^{00}  
		= 2 \Delta_\ell^2 = \tfrac{\Delta_{\ell-1}^2}{2} 
		= \Delta \eta_{t_{k+1}, t_k}^{00};
		\nonumber \\[0.2cm] 
		\Delta B_{t_{k+1/2}, t_k}^{m_1}
		\Delta B_{t_{k+1}, t_{k+1/2}}^{m_2}  
		+ \tfrac{1}{2} \Delta B^{m_1}_{t_{k+1/2}, t_k}
		\Delta B^{m_2}_{t_{k+1/2}, t_k} 
		+ \tfrac{1}{2} 
		\Delta B^{m_1}_{t_{k+1}, t_{k+1/2}}
		\Delta B^{m_2}_{t_{k+1}, t_{k+1/2}}  \\
		= \tfrac{1}{2} 
		\Delta B^{m_1}_{t_{k+1}, t_{k}}  
		\Delta B^{m_2}_{t_{k+1}, t_{k}} 
		+ 
		\tfrac{1}{2} 
		\Delta B^{m_1}_{t_{k+1/2}, t_{k}}  
		\Delta B^{m_2}_{t_{k+1}, t_{k+1/2}}  
		- 
		\tfrac{1}{2} 
		\Delta B^{m_1}_{t_{k+1}, t_{k+1/2}}  
		\Delta B^{m_2}_{t_{k+1/2}, t_{k}},     
	\end{gather*} 
	we obtain the difference equation for $\bar{X}^{f, j}$ given in the statement and thus conclude. 
%	we obtain (\ref{eq:X_f_sd}). (\ref{eq:X_f_anti_sd}) is deduced from a similar argument and then we omit the details. The proof is now complete. 
	% 
\end{proof}

\begin{lemma} \label{lemma:diff_fine_mil} 
	Work under the model class (\ref{eq:small_diffusion}).   
%	Let $1 \le j \le N$, $1 \le \ell \le L$, $0 \le k \le 2^{\ell - 1} - 1$ and $t_k = k \Delta_{\ell -1}$. 
	It holds that:
	\allowdisplaybreaks
	\begin{align*}
	        & \begin{aligned} 
		& \bar{X}^{\mathrm{TM}, [\ell],  j}_{t_{k+1}} 
		= \bar{X}^{\mathrm{TM}, [\ell],  j}_{t_k} 
		+ \sum_{0 \le m \le d} 
		\mu^{\mathbf{1}_{m > 0}} \sigma_{m}^j 
		\bigl(
		\bar{X}^{\mathrm{TM}, [\ell]}_{t_k}
		\bigr) 
		\Delta B_{t_{k+1}, t_k}^{m} 
		+ \tfrac{1}{2} 
		\mathcal{L}_{0} \sigma_{0}^j 
		\bigl(
		\bar{X}^{\mathrm{TM}, [\ell]}_{t_k} 
		\bigr) 
		\Delta \eta_{t_{k+1}, t_k}^{00}  \nonumber  \\ 
		& \quad + \mu^2 \sum_{1 \le m_1, m_2 \le d}
		\mathcal{L}_{m_1} \sigma_{m_2}^j 
		\bigl(
		\bar{X}^{\mathrm{TM}, [\ell]}_{t_k} 
		\bigr) 
		\Delta \eta_{t_{k+1}, t_k}^{m_1 m_2} \nonumber \\ 
		& \quad 
		- \tfrac{\mu^2}{2} \sum_{1 \le m_1, m_2 \le d} 
		\mathcal{L}_{m_1} \sigma_{m_2}^j 
		\bigl( 
		\bar{X}^{\mathrm{TM}, [\ell]}_{t_k} 
		\bigr)
		\bigl( 
		\Delta B_{t_{k+1}, t_{k+1/2}}^{m_1}  
		\Delta B_{t_{k+1/2}, t_k}^{m_2}
		-
		\Delta B_{t_{k+1/2}, t_k}^{m_1} 
		\Delta B_{t_{k+1}, t_{k+1/2}}^{m_2} 
		\bigr) \nonumber \\ 
		& \quad 
		+ \bar{\mathcal{M}}^{\mathrm{TM},  j}_{t_{k+1}, t_k} 
		+ \bar{\mathcal{N}}^{\mathrm{TM}, j}_{t_{k+1}, t_k};  
		\end{aligned} \\[0.1cm] 
%
	% 
%	are specified as follows: 
%	% 
%	\begin{align}
%		\mathbb{E} 
%		\bigl[ 
%		\bar{\mathcal{M}}^{\mathrm{T}\textrm{-}\mathrm{Mil}, (\mu), j}_{t_{k+1}, t_k} 
%		| \mathcal{F}_{t_{k}} 
%		\bigr] = 0,  
%		\quad  0 \le k \le 2^{\ell-1} - 1,
%	\end{align}
%	% 
%	and for any $p \ge 2$ there exist constants $C_1, C_2 > 0$ independent of $\mu \in (0,1)$ such that 
%	% 
%	\begin{gather*}
%		\max_{0 \le k \le 2^{\ell-1} - 1} 
%		\mathbb{E} 
%		\bigl[ 
%		| 
%		\bar{\mathcal{M}}^{\mathrm{T}\textrm{-}\mathrm{Mil}, (\mu), j}_{t_{k+1}, t_k}
%		|^p 
%		\bigr] 
%		\le C_1 \mu^p \Delta_{\ell-1}^{3p/2}, \quad 
%		\max_{0 \le k \le 2^{\ell-1} - 1} 
%		\mathbb{E} 
%		\bigl[ 
%		| \bar{\mathcal{N}}^{\mathrm{T}\textrm{-}\mathrm{Mil}, (\mu), j}_{t_{k+1}, t_k} |^p 
%		\bigr] 
%		\le C_2 \mu^{2p} \Delta_{\ell -1}^{2p}. 
%	\end{gather*}
	% 
	% 
%	\allowdisplaybreaks
%	\begin{align*}
		&
		\begin{aligned} 
		 & \widetilde{X}^{\mathrm{TM}, [\ell], j}_{t_{k+1}} 
		= \widetilde{X}^{\mathrm{TM}, [\ell], j}_{t_k} 
		+ \sum_{0 \le m \le d} 
		\mu^{\mathbf{1}_{m > 0}} \sigma_{m}^j 
		\bigl(
		\widetilde{X}^{\mathrm{TM}, [\ell]}_{t_k}
		\bigr) 
		\Delta B_{t_{k+1}, t_k}^{m} 
		+ \tfrac{1}{2} 
		\mathcal{L}_{0} \sigma_{0}^j 
		\bigl(
		\widetilde{X}^{\mathrm{TM}, [\ell]}_{t_k} 
		\bigr) 
		\Delta \eta_{t_{k+1}, t_k}^{00}  \\ 
		& \quad + \mu^2 \sum_{1 \le m_1, m_2 \le d}
		\mathcal{L}_{m_1} \sigma_{m_2}^j 
		\bigl(
		\widetilde{X}^{\mathrm{TM}, [\ell]}_{t_k}  
		\bigr) 
		\Delta \eta_{t_{k+1}, t_k}^{m_1 m_2} \nonumber \\ 
		& \quad 
		+ \tfrac{\mu^2}{2} \sum_{1 \le m_1, m_2 \le d} 
		\mathcal{L}_{m_1} \sigma_{m_2}^j 
		\bigl( 
		\widetilde{X}^{\mathrm{TM}, [\ell]}_{t_k}  
		\bigr)
		\bigl( 
		\Delta B_{t_{k+1}, t_{k+1/2}}^{m_1}  
		\Delta B_{t_{k+1/2}, t_k}^{m_2}
		-
		\Delta B_{t_{k+1/2}, t_k}^{m_1} 
		\Delta B_{t_{k+1}, t_{k+1/2}}^{m_2} 
		\bigr) \nonumber \\ 
		& \quad 
		+ \widetilde{\mathcal{M}}^{\mathrm{TM},  j}_{t_{k+1}, t_k} 
		+ \widetilde{\mathcal{N}}^{\mathrm{TM}, j}_{t_{k+1}, t_k},  \nonumber  
		\end{aligned} 
	\end{align*}
	where the remainder terms $ \bar{\mathcal{M}}^{\mathrm{TM}, j}_{t_{k+1}, t_k}$ and $\widetilde{\mathcal{M}}_{t_{k+1}, t_k}^{\mathrm{TM}, j}$ satisfy the same properties as  
	$\bar{\mathcal{M}}_{t_{k+1}, t_k}^{f, j}$ in Lemma \ref{lemma:diff_fine_w2},  and $\bar{\mathcal{N}}^{\mathrm{TM},  j}_{t_{k+1}, t_k}$  and $\widetilde{\mathcal{N}}_{t_{k+1}, t_k}^{\mathrm{TM}, j}$ 
	do as $\bar{\mathcal{N}}_{t_{k+1}, t_k}^{f, j}$ given in Lemma \ref{lemma:diff_fine_w2}. 
\end{lemma}  

\begin{proof}
	The proof follows the same argument of \cite[Lemma 4.7]{ml_anti} and Lemma \ref{lemma:diff_fine_w2}. 
	We here only mention that in the case of truncated Milstein scheme, the fine discretization produces the deterministic $\mathcal{O} (\Delta_n^2)$-term as  
	$ \mathcal{L}_0 \sigma_0 (\bar{X}_{t_k}^{\textrm{TM}, [\ell]}) \Delta_\ell^2  = \tfrac{1}{2} \mathcal{L}_0 \sigma_0 (\bar{X}_{t_k}^{\textrm{TM}, [\ell]}) \Delta \eta_{t_{k+1}, t_k}^{00}$, 
	whereas in the case of the weak second order scheme, we have that  
	$\mathcal{L}_0 \sigma_0 (\bar{X}_{t_k}^{f})  \times \bigl\{ \Delta_\ell^2  + \Delta \eta_{t_{k+1/2}, t_k}^{00} +  \Delta \eta_{t_{k+1}, t_{k+1/2}}^{00} \bigr\}
	= \mathcal{L}_0 \sigma_0 (\bar{X}_{t_k}^{f}) \Delta \eta_{t_{k+1}, t_k}^{00}$. 
	The rest of the proof relies on the same argument as in the proof of Lemma \ref{lemma:diff_fine_w2}. 
\end{proof}

\begin{lemma} \label{lemma:f_anti_diff_sd}
	Work under the model class (\ref{eq:small_diffusion}). 
%	 Let $1 \le j \le N$, $1 \le \ell \le L$, $0 \le k \le 2^{\ell - 1} - 1$ and $t_k = k \Delta_{\ell -1}$. 
	It holds that: 
	\begin{align}
		\begin{aligned}
			& \hat{X}^{f,  j}_{t_{k+1}} 
			= \hat{X}^{f, j}_{t_k}
			+ 
			\sum_{0 \le m \le d} 
			\mu^{\mathbf{1}_{m > 0}}
			\sigma_{m}^j (\hat{X}^{f}_{t_k}) 
			\Delta B_{t_{k+1}, t_k}^{m} 
			+ \mathcal{L}_{0} \sigma_{0}^j 
			(\hat{X}_{t_k}^{f}) \Delta \eta_{t_{k+1}, t_k}^{00}  \\ 
			& \quad 
			+ \mu^2 \sum_{1 \le m_1, m_2 \le d} 
			\mathcal{L}_{m_1} \sigma_{m_2}^j 
			( \hat{X}_{t_k}^{f} ) 
			\Delta \eta_{t_{k+1}, t_k}^{m_1 m_2} 
			+ \hat{\mathcal{M}}_{t_{k+1}, t_k}^{f, j} 
			+ \hat{\mathcal{N}}_{t_{k+1}, t_k}^{f, j};  
		\end{aligned} \label{eq:hat_X_f_sd} \\ 
		\begin{aligned}
			& \hat{X}_{t_{k+1}}^{c, j} 
			= \hat{X}_{t_k}^{c, j} 
			+ \sum_{0 \le m \le d} \mu^{\mathbf{1}_{m>0}}
			\sigma_{m}^j (\hat{X}_{t_k}^{c}) \Delta B_{t_{k+1}, t_k}^{m}
			+ \mathcal{L}_{0} \sigma_{0}^j (\hat{X}_{t_k}^{c}) 
			\Delta \eta^{00}_{t_{k+1}, t_k} \\
			& \quad 
			+ \mu^2 \sum_{1 \le m_1, m_2 \le d} 
			\mathcal{L}_{m_1} \sigma_{m_2}^j (\hat{X}_{t_k}^{c}) 
			\Delta \eta^{m_1 m_2}_{t_{k+1}, t_k}  
			+ \hat{\mathcal{M}}_{t_{k+1}, t_k}^{c, j}  
			+  \hat{\mathcal{N}}_{t_{k+1}, t_k}^{c,  j},    
		\end{aligned} \label{eq:hat_X_c_sd} 
	\end{align}  
	where the remainder terms $ \hat{\mathcal{M}}^{f, j}_{t_{k+1}, t_k}$ and $\hat{\mathcal{M}}_{t_{k+1}, t_k}^{c, j}$ satisfy the same properties as  
	$\bar{\mathcal{M}}_{t_{k+1}, t_k}^{f, j}$ in Lemma \ref{lemma:diff_fine_w2},  and $ \hat{\mathcal{N}}^{f, j}_{t_{k+1}, t_k}$ and $\hat{\mathcal{N}}_{t_{k+1}, t_k}^{c, j}$  
	do as $\bar{\mathcal{N}}_{t_{k+1}, t_k}^{f, j}$ given in Lemma \ref{lemma:diff_fine_w2}.
%	where the remainder terms $\hat{\mathcal{M}}_{t_{k+1}, t_k}^{c, j}$ and  $\hat{\mathcal{N}}_{t_{k+1}, t_k}^{c, j}$ have the same properties as  $ \bar{\mathcal{M}}^{f, j}_{t_{k+1}, t_k}$ and $\bar{\mathcal{N}}^{f,  j}_{t_{k+1}, t_k}$, respectively provided in Lemma \ref{lemma:diff_fine_w2}.  
	% 
%	where the remainder terms $\hat{\mathcal{M}}_{t_{k+1}, t_k}^{f, j}$ and  $\hat{\mathcal{N}}_{t_{k+1}, t_k}^{f, j}$ have the same properties as  $ \bar{\mathcal{M}}^{f, j}_{t_{k+1}, t_k}$ and $\bar{\mathcal{N}}^{f,  j}_{t_{k+1}, t_k}$, respectively provided in Lemma \ref{lemma:diff_fine_w2}. 
%	are specified as follows:
%	% 
%	\begin{align} 
%		\mathbb{E}
%		\bigl[\hat{\mathcal{M}}_{t_{k+1}, t_k}^{f, (\mu),  j} | \mathcal{F}_{t_{k}} \bigr] = 0, \quad 0 \le k \le 2^{\ell-1} - 1, 
%	\end{align}
%	% 
%	and for any $p \ge 2$, there exist constants $C_1, C_2 > 0$ independent of $\mu \in (0,1)$ such that 
%	% 
%	\begin{align}
%		\max_{0 \le k \le 2^{\ell-1} - 1} 
%		\mathbb{E} 
%		\bigl[ 
%		| \hat{\mathcal{M}}_{t_{k+1}, t_k}^{f, (\mu), j}|^p 
%		\bigr] 
%		\le C_1 \mu^{p} \Delta_{\ell-1}^{3p/2}, 
%		\quad 
%		\max_{0 \le k \le 2^{\ell-1} - 1} 
%		\mathbb{E}
%		\bigl[ 
%		| \hat{\mathcal{N}}_{t_{k+1}, t_k}^{f, (\mu),  j} |^p
%		\bigr] 
%		\le C_2 \mu^{2p} \Delta_{\ell-1}^{2p}. 
%	\end{align}
	% 
\end{lemma}
\begin{proof}
(\ref{eq:hat_X_f_sd}) is obtained from the argument in the proof of Lemma \ref{lemma:f_anti_diff} in Appendix \ref{app:tech} together with Lemma \ref{lemma:diff_fine_w2}. 
We also deduce (\ref{eq:hat_X_c_sd}) by applying the argument used in the proof of Lemma \ref{lemma:coare} in Appendix \ref{app:tech}.    
%, from the definition of the standard/antithetic coarse discretizations (\ref{eq:X_c})/(\ref{eq:X_c_a}) in the main text with the dependence of the small parameter $\mu$, we deduce the result 
\end{proof}

%\begin{lemma} \label{lemma:f_coarse_diff_sd}
%	Work under the model class (\ref{eq:small_diffusion}).  
%	Let $1 \le j \le N$, $1 \le \ell \le L$, $0 \le k \le 2^{\ell - 1} - 1$ and $t_k = k \Delta_{\ell -1}$. 
%	It holds that 
	% 
%	where the remainder terms are specified as follows: 
%	% 
%	\begin{align}  
%		\mathbb{E} 
%		\bigl[
%		\hat{\mathcal{M}}_{t_{k+1}, t_k}^{c, (\mu), j} | \mathcal{F}_{t_{k}}
%		\bigr]
%		= 0, \quad 0 \le k \le 2^{\ell-1} - 1, 
%	\end{align}
%	% 
%	and for any $p \ge 2$ there exist constants $C_1, C_2 > 0$ independent of $\mu \in (0,1)$ such that
%	% 
%	\begin{align} 
%		\max_{0 \le k \le 2^{\ell - 1} - 1} 
%		\mathbb{E} \bigl[ 
%		| \hat{\mathcal{M}}_{t_{k+1}, t_k}^{c, (\mu),  j} |^p  
%		\bigr] \le C_1 \mu^p \Delta_{\ell-1}^{3p/2}, 
%		\quad 
%		\max_{0 \le k \le 2^{\ell - 1} - 1}  
%		\mathbb{E} \bigl[ 
%		| \hat{\mathcal{N}}_{t_{k+1}, t_k}^{c, (\mu),  j} |^p  
%		\bigr] \le C_2 \mu^{2p} \Delta_{\ell-1}^{2p}.  
%	\end{align} 
%\end{lemma}

\begin{lemma} \label{lemma:mil_anti_diff_sd}
	Work under the model class (\ref{eq:small_diffusion}).  
%	Let $1 \le j \le N$, $1 \le \ell \le L$,  $0 \le k \le 2^{\ell - 1} - 1$ and $t_k = k \Delta_{\ell -1}$. 
	It holds that: 
	\begin{align*}
		& \hat{X}^{\mathrm{TM}, [\ell], j}_{t_{k+1}} 
		= \hat{X}^{\mathrm{TM}, [\ell] , j}_{t_k}  
		+ \sum_{0 \le m \le d}  \mu^{\mathbf{1}_{m > 0}} 
		\sigma_{m}^j (\hat{X}^{\mathrm{TM} , [\ell]}_{t_k}) 
		\Delta B_{t_{k+1}, t_k}^{m} 
		+ \tfrac{1}{2} \mathcal{L}_{0} \sigma_{0}^j 
		(\hat{X}_{t_k}^{\mathrm{TM}, [\ell]} )  \Delta \eta_{t_{k+1}, t_k}^{00}  \\
		& \qquad + \mu^2 \sum_{1 \le m_1, m_2 \le d} 
		\mathcal{L}_{m_1} \sigma_{m_2}^j 
		( \hat{X}_{t_k}^{\mathrm{TM}, [\ell]} ) 
		\Delta \eta_{t_{k+1}, t_k}^{m_1 m_2} 
		+ \hat{\mathcal{M}}_{t_{k+1}, t_k}^{\mathrm{TM}, j} 
		+ \hat{\mathcal{N}}_{t_{k+1}, t_k}^{\mathrm{TM}, j}, 
	\end{align*}
	where the remainder terms $\hat{\mathcal{M}}_{t_{k+1}, t_k}^{\mathrm{TM}, j}$ and  $\hat{\mathcal{N}}_{t_{k+1}, t_k}^{\mathrm{TM}, j}$ have the same properties as  $ \bar{\mathcal{M}}^{f, j}_{t_{k+1}, t_k}$ and $\bar{\mathcal{N}}^{f,  j}_{t_{k+1}, t_k}$, respectively provided in Lemma \ref{lemma:diff_fine_w2}. 
%	where the remainder terms are specified as follows:
%	% 
%	\begin{align*} 
%		\mathbb{E}
%		\bigl[\hat{\mathcal{M}}_{t_{k+1}, t_k}^{\mathrm{T}\textrm{-}\mathrm{Mil}, (\mu),  j} | \mathcal{F}_{t_{k}} \bigr] = 0, \quad 0 \le k \le 2^{\ell-1} - 1, 
%	\end{align*}
%	% 
%	and for any $p \ge 2$, there exist constants $C_1, C_2 > 0$ independent of $\mu \in (0,1)$ such that 
%	% 
%	\begin{align*}
%		\max_{0 \le k \le 2^{\ell-1} - 1} 
%		\mathbb{E} 
%		\bigl[ 
%		| \hat{\mathcal{M}}_{t_{k+1}, t_k}^{\mathrm{T}\textrm{-}\mathrm{Mil}, (\mu), j}|^p 
%		\bigr] 
%		\le C_1 \mu^{p} \Delta_{\ell-1}^{3p/2}, 
%		\quad 
%		\max_{0 \le k \le 2^{\ell-1} - 1} 
%		\mathbb{E}
%		\bigl[ 
%		| 
%		\hat{\mathcal{N}}_{t_{k+1}, t_k}^{\mathrm{T}\textrm{-}\mathrm{Mil}, (\mu), j} 
%		|^p
%		\bigr] 
%		\le C_2 \mu^{2p} \Delta_{\ell-1}^{2p}. 
%	\end{align*}
	% 
\end{lemma}
\begin{proof}
	The result is deduced from Lemma \ref{lemma:diff_fine_mil} and the argument used in the proof of \cite[Lemma 4.9]{ml_anti}, and we omit the detailed proof. 
\end{proof}

\subsection{Proof of Theorem \ref{thm:m_bd_sd}} 
\label{app:m_bd_sd}
\begin{proof}
%	Let $t_k = k \Delta_{\ell -1}, \, 0 \le k \le 2^{\ell - 1}$. 
	We first show (\ref{eq:w2_sd}). From Lemma \ref{lemma:f_anti_diff_sd}, we obtain: 
%	for $1 \le k \le 2^{\ell - 1}$ and $1 \le j \le N$,  
	% 
	\begin{align} \label{eq:diff_sd}
		\begin{aligned}
		& \hat{X}^{f, j}_{t_k} - \hat{X}^{c, j}_{t_k}
		= 
		\sum_{0 \le i \le k-1} \sum_{0 \le m \le d} 
		\mu^{\mathbf{1}_{m > 0}}
		\bigl( \sigma_{m}^j (\hat{X}^{f}_{t_i})  - \sigma_{m}^j (\hat{X}^{c}_{t_i}) \bigr) 
		\Delta B_{t_{i+1}, t_i}^{m}  \\ 
		& \qquad \qquad
		+ \sum_{0 \le i \le k-1} 
		\bigl( 
		\mathcal{L}_{0} \sigma_{0}^j 
		(\hat{X}^{f}_{t_i}) 
		- \mathcal{L}_{0} \sigma_{0}^j 
		(\hat{X}^{c}_{t_i})
		\bigr) 
		\Delta \eta_{t_{i+1}, t_i}^{00}   \\
		& \qquad \qquad
		+ \mu^2 \sum_{0 \le i \le k-1} \sum_{1 \le m_1, m_2 \le d}
		\bigl( 
		\mathcal{L}_{m_1} \sigma_{m_2}^j 
		(\hat{X}^{f}_{t_i}) 
		- \mathcal{L}_{m_1} \sigma_{m_2}^j 
		(\hat{X}^{c}_{t_i})
		\bigr) 
		\Delta \eta_{t_{i+1}, t_i}^{m_1 m_2}  \\ 
		& \qquad  \qquad 
		+ \sum_{0 \le i \le k-1}
		\bigl\{ 
		\hat{\mathcal{M}}_{t_{i+1}, t_i}^{f, j} 
		+ \hat{\mathcal{N}}_{t_{i+1}, t_i}^{f, j}
		+ \hat{\mathcal{M}}_{t_{i+1}, t_i}^{c, j}
		+ \hat{\mathcal{N}}_{t_{i+1}, t_i}^{c, j}  
		\bigr\},  
		\end{aligned} 
	\end{align} 
	where the terms 
	$
	\hat{\mathcal{M}}_{t_{i+1}, t_i}^{f, j}, \, \hat{\mathcal{N}}_{t_{i+1}, t_i}^{f,  j}, \, \hat{\mathcal{M}}_{t_{i+1}, t_i}^{c,  j}, \, \hat{\mathcal{N}}_{t_{i+1}, t_i}^{c, j} 
	$
	are defined in Lemma \ref{lemma:f_anti_diff_sd}. 
	Then, we have from \eqref{eq:diff_sd} that, for $0 \le n \le 2^{\ell - 1}$ and $p \ge 2$: 
	\begin{align}
		\mathcal{S}_{n}^{\textrm{Weak-2}} 
		& \equiv  \mathbb{E} 
		\bigl[ \max_{0 \le k \le n} 
		\bigl\|
		\hat{X}_{t_k}^{f} 
		- 
		\hat{X}_{t_k}^{c}  
		\bigr\|^p  
		\bigr] 
%		\le   c_1  \sum_{1 \le j \le N} 
%		\mathbb{E} 
%		\bigl[ \max_{0 \le k \le n} 
%		\bigl|
%		\hat{X}_{t_k}^{f, j} 
%		- 
%		\hat{X}_{t_k}^{c, j}  
%		\bigr|^p  
%		\bigr]  \nonumber  \\ 
		 \le   C 
		\Bigl( 
		\mu^{p} \Delta_{\ell-1}^{p} 
		+ \Delta_{\ell-1} \sum_{0 \le k \le n-1} 
		\mathcal{S}_{k}^{\textrm{Weak-2}} 
		\Bigr) \label{eq:w2_iter_sd} 
	\end{align} 
	% 
	%We have from (\ref{eq:diff_sd}) that 
	% 
%	\begin{align}
%		\mathcal{S}_{n}^{\textrm{Weak-2}, (\mu)}  
%		& \le c_1 \sum_{1 \le j \le N} 
%		\mathbb{E} 
%		\Bigl[ \max_{0 \le k \le n} 
%		\bigl|
%		\hat{X}_{t_k}^{f, (\mu), [\ell], j} 
%		- 
%		\hat{X}_{t_k}^{c, (\mu), [\ell -1], j}  
%		\bigr|^p  
%		\Bigr]  
%		\le c_2 
%		\Bigl( 
%		\mu^{p} \Delta_{\ell-1}^{p} 
%		+ \Delta_{\ell-1} \sum_{0 \le k \le n-1} 
%		\mathcal{S}_{k}^{\textrm{Weak-2}, (\mu)} 
%		\Bigr) \label{eq:w2_iter}
%	\end{align} 
	% 
	for some constant $C > 0$ independent of $\mu \in (0,1)$, where we have used the same argument in the proof of \cite[Lemma 4.10]{ml_anti}. Applying the discrete Gr\"onwall inequality to (\ref{eq:w2_iter_sd}), we obtain (\ref{eq:w2_sd}).

	Subsequently, we prove (\ref{eq:tmil_sd}). It follows from Lemma \ref{lemma:mil_anti_diff_sd} that: 
%	for $1 \le k \le 2^{\ell - 1}$ and $1 \le j \le N$,  
	% 
	\begin{align} \label{eq:diff_mil_sd}
		\begin{aligned} 
		& \hat{X}^{\textrm{TM}, [\ell], j}_{t_k} 
		- \bar{X}^{\textrm{TM}, [\ell-1], j}_{t_k} 
		 = \! \! \! 
		\sum_{\substack{0 \le i \le k-1 \\ 0 \le m \le d}}  
		\mu^{\mathbf{1}_{m > 0}}
		\bigl(
		\sigma_{m}^j (\hat{X}^{\textrm{TM}, [\ell]}_{t_i})
		- \sigma_{m}^j (\bar{X}^{\textrm{TM}, [\ell-1]}_{t_i})  \bigr) 
		\Delta B_{t_{i+1}, t_i}^{m} \\ 
		& \qquad \qquad  
		+ \sum_{0 \le i \le k-1} 
		\Bigl( \tfrac{1}{2} \mathcal{L}_{0} \sigma_{0}^j 
		(\hat{X}^{\textrm{TM}, [\ell]}_{t_i}) 
		-   \mathcal{L}_{0} \sigma_{0}^j 
		(\bar{X}^{\textrm{TM}, [\ell-1]}_{t_i}) \Bigr) 
		\Delta \eta_{t_{i+1}, t_i}^{00}   \\  
		& \qquad \qquad 
		+ \mu^2 \sum_{\substack{0 \le i \le k-1 \\ 1 \le m_1, m_2 \le d }}
		\bigl( 
		\mathcal{L}_{m_1} \sigma_{m_2}^j 
		(\hat{X}^{\textrm{TM}, [\ell]}_{t_i}) 
		- \mathcal{L}_{m_1} \sigma_{m_2}^j 
		(\bar{X}^{\textrm{TM}, [\ell-1]}_{t_i})
		\bigr) 
		\Delta \eta_{t_{i+1}, t_i}^{m_1 m_2}   \\ 
		&  \qquad \qquad    + \sum_{0 \le i \le k-1}
		\bigl\{ 
		\hat{\mathcal{M}}_{t_{i+1}, t_i}^{\textrm{TM},  j} 
		+ \hat{\mathcal{N}}_{t_{i+1}, t_i}^{\textrm{TM},  j}
		\bigr\},  
		\end{aligned} 
	\end{align} 
	where the terms $\hat{\mathcal{M}}_{t_{i+1}, t_i}^{\textrm{TM}, j}, \, 
	\hat{\mathcal{N}}_{t_{i+1}, t_i}^{\textrm{TM},  j}$ are defined in Lemma \ref{lemma:mil_anti_diff_sd}. 
%		We define: for $0 \le n \le 2^{\ell - 1}$ and $p \ge 2$, 
	% 
%	\begin{align*}
%		\mathcal{S}_{n}^{\textrm{TM}} 
%		= \mathbb{E} 
%		\Bigl[ \max_{0 \le k \le n} 
%		\bigl\|
%		\hat{X}_{t_k}^{\textrm{TM}, (\mu), [\ell]} 
%		- 
%		\bar{X}_{t_k}^{\textrm{TM}, (\mu), [\ell -1]}  
%		\bigr\|^p  
%		\Bigr]. 
%	\end{align*}
	% 
	Then, it follows from (\ref{eq:diff_mil_sd}) that, for $0 \le n \le 2^{\ell - 1}$ and $p \ge 2$: 
	\begin{align} \label{eq:tmil_iter}
	\mathcal{S}_{n}^{\textrm{TM}} 
	\equiv  \mathbb{E} 
	\Bigl[ \max_{0 \le k \le n} 
	\bigl\|
	\hat{X}_{t_k}^{\textrm{TM}, [\ell]} 
	- 
	\bar{X}_{t_k}^{\textrm{TM}, [\ell -1]}  
	\bigr\|^p  
	\Bigr]  \le C \Bigl( \Delta_{\ell-1}^p +  
		\Delta_{\ell-1}  \sum_{0 \le k \le n-1} \mathcal{S}_k^{\textrm{TM}} \Bigr) 
	\end{align}
	for some $C > 0$ independent of $\mu \in (0,1)$, where we have used again the argument in the proof of \cite[Lemma 4.10]{ml_anti}. We here note that the first term of the R.H.S. of (\ref{eq:tmil_iter}) is now independent of $\mu \in (0,1)$ and comes from:  
	\begin{align}
		& \mathbb{E} \left[ 
		\max_{0 \le k \le n} 
		\Bigl| 
		\sum_{0 \le i \le k-1} 
		\Bigl( 
		\tfrac{1}{2} \mathcal{L}_{0} \sigma_{0}^j 
		(\hat{X}^{\textrm{TM}, [\ell]}_{t_i}) 
		-  \mathcal{L}_{0} \sigma_{0}^j 
		(\bar{X}^{\textrm{TM}, [\ell-1]}_{t_i})
		\Bigr) 
		\Delta \eta_{t_{i+1}, t_i}^{00}   
		\Bigr|^p  \right] \label{eq:tmil_half}
		\\ 
		& \quad 
		\le c_1 n^{p-1} \sum_{0 \le i \le n-1} | \Delta \eta_{t_{i+1}, t_i}^{00} |^p
		\le  c_2 n^p \Delta_{\ell - 1}^{2p}
		\le c_2 T^p \Delta_{\ell - 1}^p, \nonumber 
	\end{align} 
	for some $c_1, c_2 > 0$ independent of $\mu \in (0,1)$. In particular, due to `$\tfrac{1}{2}$', the term (\ref{eq:tmil_half}) is not bounded as the second term of the R.H.S. of (\ref{eq:tmil_iter}) in the case of the truncated Milstein scheme. Finally, applying the discrete Gr\"onwall inequality to (\ref{eq:tmil_iter}), we conclude. 
\end{proof}

\bibliographystyle{siamplain}
%\bibliography{references}

\begin{thebibliography}{99}
\bibitem{nv_anti}
{\sc Al Gerbi}, A., {\sc Jourdain}, B., 
\&  {\sc Cl{\'e}ment}, E.~(2016). 
Ninomiya\,--Victoir scheme: Strong convergence, antithetic version and application to multilevel estimators. 
\emph{Monte Carlo Methods Appl.}, {\bf 22}, 197-228. 

\bibitem{bain}
{\sc Bain}, A., \& {\sc Crisan}, D.~(2009). \emph{Fundamentals of Stochastic Filtering}. Springer: New York.

\bibitem{ballesio}
{\sc Ballesio}, M., {\sc Jasra}, A., {\sc von Schwerin}, E. \& {\sc Tempone},R.~(2023).
A Wasserstein coupled particle filter for multilevel estimation. \emph{Stoch. Anal. Appl.}, {\bf 41}, 820--859.

\bibitem{cappe}
{\sc Capp\'e}, O., {\sc Ryden}, T, \& {\sc Moulines}, \'E.~(2005). \emph{Inference
in Hidden Markov Models}. Springer: New York.

\bibitem{chopin}
{\sc Chopin}, N. and {\sc Papaspiliopoulos}, O.~(2020).
\emph{An introduction to sequential Monte Carlo}. 
{Springer Cham}. 

\bibitem{delmoral}
{\sc Del Moral}, P.~(2004).
\emph{Feynman-Kac Formulae: Genealogical and Interacting Particle Systems with Applications}. Springer: New York.

\bibitem{dit}
{\sc Ditlevsen}, S. \& {\sc Samson}, A.~(2019).
Hypoelliptic diffusions: filtering and inference from complete and partial observations. 
\emph{J. R. Stat. Soc., B: Stat. Methodol.}, 
{\bf 81}, {361--384}.


\bibitem{Erment} 
{\sc Ermentrout}, B. \& {\sc Terman}, D.~(2010).  \emph{Mathematical foundations of neuroscience}.  Springer: New York.

\bibitem{giles}
{\sc Giles}, M. B.~(2008). Multilevel Monte Carlo path simulation. \emph{Op. Res.}, {\bf 56}, 607-617.

\bibitem{giles1}
{\sc Giles}, M. B.~(2015)  Multilevel Monte Carlo methods. \emph{Acta Numerica}, {\bf 24}, 259-328.


\bibitem{ml_anti}
{\sc Giles}, M. B.,  \& {\sc Szpruch}, L.~(2014).
Antithetic multilevel Monte Carlo estimation for multidimensional SDEs without L\'evy area simulation. \emph{Ann. Appl. Probab.}, {\bf 24}, 1585-1620.

\bibitem{glass}
{\sc Glasserman}, P.~(2003). \emph{Monte Carlo Methods in Financial Engineering}. Springer: New York.

\bibitem{glot}
{\sc Gloter}, A.,  \& {\sc Yoshida}, N.~(2021).
Adaptive estimation for degenerate diffusion processes.
\emph{Electron. J. Stat.}, {\bf 15}, {1424--1472}. 

\bibitem{hein}
{\sc Heinrich}, S.~(2001).
{Multilevel {Mo}nte {C}arlo methods}.
In \emph{Large-Scale Scientific Computing}, (eds.~S. Margenov, J. Wasniewski \&
P. Yalamov), Springer: Berlin.

\bibitem{Heston} {\sc Heston}, S.L. ~(1993).  A closed-form solution for options with stochastic volatility with applications to bond and currency options. \emph{Rev. Finan. Stud.}, {\bf 6}, 327-343.

% \bibitem[\protect\citeauthoryear{Hodgkin and Huxley}{1952}]{ref6} 
% {\sc Hodgkin}, A. \& {\sc Huxley}, A. ~(1952). Propagation of electrical signals along giant nerve fibers, \emph{Proc.  Roy.  Soc.  Lond.  Ser. B},  177-183. 

\bibitem{iguchi2}
{\sc Iguchi, Y., Beskos, A., \& Graham, M.}~(2025). Parameter estimation with increased precision for elliptic and hypo-elliptic diffusions. \emph{Bernoulli}. 
{\bf 31}, {333--358}. 

%\bibitem{supp}
%{\sc Iguchi, Y., Jasra, A., Maama, M. \& Beskos, A.}~(2024). Supplement to ``Antithetic multilevel methods for elliptic and hypo-elliptic diffusions with applications''. 

\bibitem{iguchi1}
{\sc Iguchi, Y. \& Yamada, T.}~(2021). 
A second-order discretization for degenerate systems of stochastic differential equations. 
\emph{IMA J. Numer. Anal.},
{\bf 41}, {2782--2829}.  

\bibitem{mlpf}
{\sc Jasra}, A., {\sc Kamatani}, K., {\sc Law} K. J. H. \& {\sc Zhou}, Y.~(2017). 
Multilevel particle filters. \emph{SIAM J. Numer. Anal.}, {\bf 55}, 3068-3096.

\bibitem{mlpf1}
{\sc Jasra}, A., {\sc Kamatani}, K., {\sc Osei}, P. P., \& {\sc Zhou}, Y. (2018). Multilevel particle filters: Normalizing Constant Estimation. \emph{Statist. Comp.},
{\bf 28}, 47-60.

\bibitem{levymlpf}
{\sc Jasra}, A., {\sc Law} K. J. H. \& {\sc Osei}, P. P.~(2019).
Multilevel particle filters for L\'evy driven stochastic differential equations.
\emph{Statist. Comp.},
{\bf 29}, 775-789.

\bibitem{coup_clt}
{\sc Jasra}, A., \& {\sc Yu}, F.~(2020). 
Central limit theorems for coupled particle filters. \emph{Adv. Appl. Probab.}, {\bf 52}, 942-1001.


\bibitem{ml_rev}
{\sc Jasra}, A., {\sc Law} K. J. H. \& {\sc Suciu}, C.~(2020).  Advanced Multilevel Monte Carlo. \emph{Intl. Stat. Rev.}, {\bf 88}, 548-579.

\bibitem{ub_pf}
{\sc Jasra}, A., {\sc Law}, K. J. H. \& {\sc Yu}, F.~(2022). Unbiased filtering of a class of partially observed diffusions.
\emph{Adv. Appl. Probab.}, {\bf 54}, 661-687.

\bibitem{anti_mlpf}
{\sc Jasra}, A.,  {\sc Maama}, M. \& {\sc Ombao}, H.~(2024).  Antithetic Multilevel Particle Filters.  \emph{Adv. Appl. Probab.} (to appear).

\bibitem{kloeden}
{\sc Kloeden}, P. \& {\sc Platen}, E.~(1992). \emph{Numerical Solution of Stochastic Differential Equations}. Springer: New York.

\bibitem{mil}
{\sc Milstein}, G. N. \& {\sc Tretyakov}, M. V.~(2021). \emph{Stochastic Numerics for Mathematical Physics}. Springer: Cham.

\bibitem{nv}
{\sc Ninomiya}, S. \& {\sc Victoir}, N.~(2008). 
Weak approximation of stochastic differential equations and application to derivative pricing. 
\emph{Appl. Math. Finance},  {\bf 15}, 107-121. 

\bibitem{malliavin}
{\sc Nualart}, D.  
\emph{The Malliavin Calculus and Related Topics}. Springer Berlin, Heidelberg.

\bibitem{rumm}
{\sc Rumelin}, W.~(1982).  Numerical treatment of stochastic differential equations. 
\emph{SIAM J. Numer. Anal.}, {\bf 19}, 604-613.

\end{thebibliography}

\end{document}